\newtheorem{lemma}{Lemma}[section]
\newtheorem{theorem}[lemma]{Theorem}
\newtheorem{proposition}[lemma]{Proposition}
\newtheorem{conjecture}[lemma]{Conjecture}
\theoremstyle{definition}
\newtheorem{definition}[lemma]{Definition}
\newtheorem{remark}[lemma]{Remark}
\tikzstyle{map}=[->,semithick]
\tikzstyle{arc}=[bend left,->,semithick]
\tikzstyle{rinclusion}=[right hook->,semithick]
\tikzstyle{linclusion}=[left hook->,semithick]
\def\Ri{\mathcal R}
\def\R{{\mathbb R}}
\def\K{{\mathcal K}}
\def\eps{{\varepsilon}}
\def\G{{\mathcal{G}}}
\def\sh{{\mathscr{Sh}}}
\def\h{{\mathrm{H}}}
\def\A{{\mathcal A}}
\newcommand{\diam}[2][\M]{\mathrm{diam}_{#1}\left(#2\right)}
\def\diam{\operatorname{diam}}
\def\conv{\operatorname{Conv}}
\begin{document}

\title{The Shadow of Vietoris--Rips Complexes in Limits}

\author{Kazuhiro Kawamura}
\address{Department of Mathematics, University of Tsukuba, Japan}
\email{kawamura@math.tsukuba.ac.jp}

\author{Sushovan Majhi}
\address{Data Science Program, George Washington University, USA}
\email{s.majhi@gwu.edu}

\author{Atish Mitra}
\address{Mathematics Department, Montana Technological University, USA}
\email{amitra@mtech.edu}

\begin{abstract} 
The Vietoris--Rips complex, denoted $\Ri_\beta(X)$, of a metric space $(X,d)$ at scale $\beta$ is an abstract simplicial complex where each $k$-simplex corresponds to $(k+1)$ points of $X$ within diameter $\beta$.
For any abstract simplicial complex $\K$ with the vertex set $\K^{(0)}$ a Euclidean subset, its shadow, denoted $\sh(\K)$, is the union of the convex hulls of simplices of $\K$.
This article centers on the homotopy properties of the shadow of Vietoris--Rips complexes $\K=\Ri_\beta(X)$ with vertices from $\mathbb{R}^N$, along with the canonical projection map $ p\colon \Ri_\beta (X) \to \sh(\Ri_\beta(X))$.
The study of the geometric/topological behavior of $p$ is a natural yet non-trivial problem. 
The map $p$ may have many ``singularities'', which have been partially resolved only in low dimensions $N\leq 3$.
The obstacle naturally leads us to study systems of these complexes $\{ \sh(\Ri_{\beta}(S)) \mid \beta > 0, S\subset X\}$. We address the challenge posed by singularities in the shadow projection map by studying systems of the shadow complex using inverse system techniques from shape theory, showing that the limit map exhibits favorable homotopy-theoretic properties. More specifically, leveraging ideas and frameworks from Shape Theory, we show that in the limit ``$\beta \to 0$ and $S \to X$'', the limit map ``$\lim p$'' behaves well with respect to homotopy/homology groups when $X$ is an ANR (Absolute Neighborhood Retract) and admits a metric that satisfies some regularity conditions.  This results in limit theorems concerning the homotopy properties of systems of these complexes as the proximity scale parameter approaches zero and the sample set approaches the underlying space (e.g., a submanifold or Euclidean graph).
The paper concludes by discussing the potential of these results for finite reconstruction problems in one-dimensional submanifolds.
\end{abstract}


\keywords{Vietoris--Rips complex, geometric complex, shadow complex, direct limit, inverse limit} 

\subjclass[2020]{18A30 (Primary), 51F99, 55N31 (Secondary)}

\maketitle

\section{Introduction}
\begin{definition}[The Vietoris--Rips Complex]\label{def:VR}
Given a metric space $(X,d_X)$ and a positive proximity scale $\beta$, the \emph{Vietoris--Rips} complex of $X$ at scale $\beta$, denoted $\Ri_\beta(X)$, is defined to be an abstract simplicial complex having an $m$--simplex for every finite subset of $\sigma\subset\A$ with cardinality $(m+1)$ and diameter less than $\beta$.
More concretely,
\[
\Ri_{\beta}(X) = \{ \sigma\mid \sigma ~\mbox{is a finite subset of }~ A, \diam_{d_X}(\sigma) < \beta \}.
\]    
\end{definition}
The strict inequality in the above definition is essential to this paper.  
For simplicity, the geometric realization of $\Ri_\beta(X)$ endowed with the Whitehead topology \cite{spanier1995algebraic} is also denoted by the same symbol.

\begin{figure}[tbh]
\centering
\includegraphics[scale=0.1]{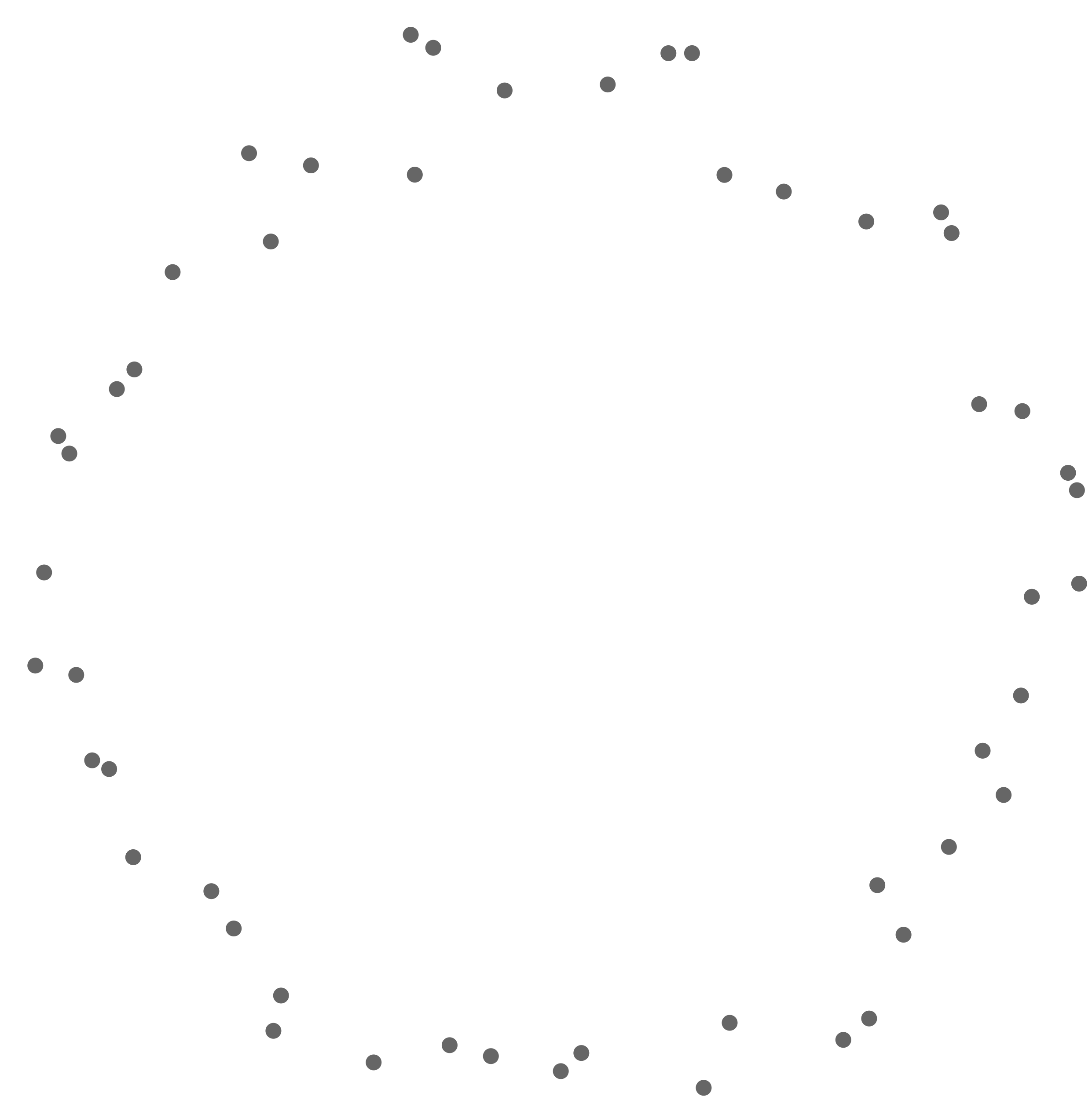}
\quad
\includegraphics[scale=0.1]{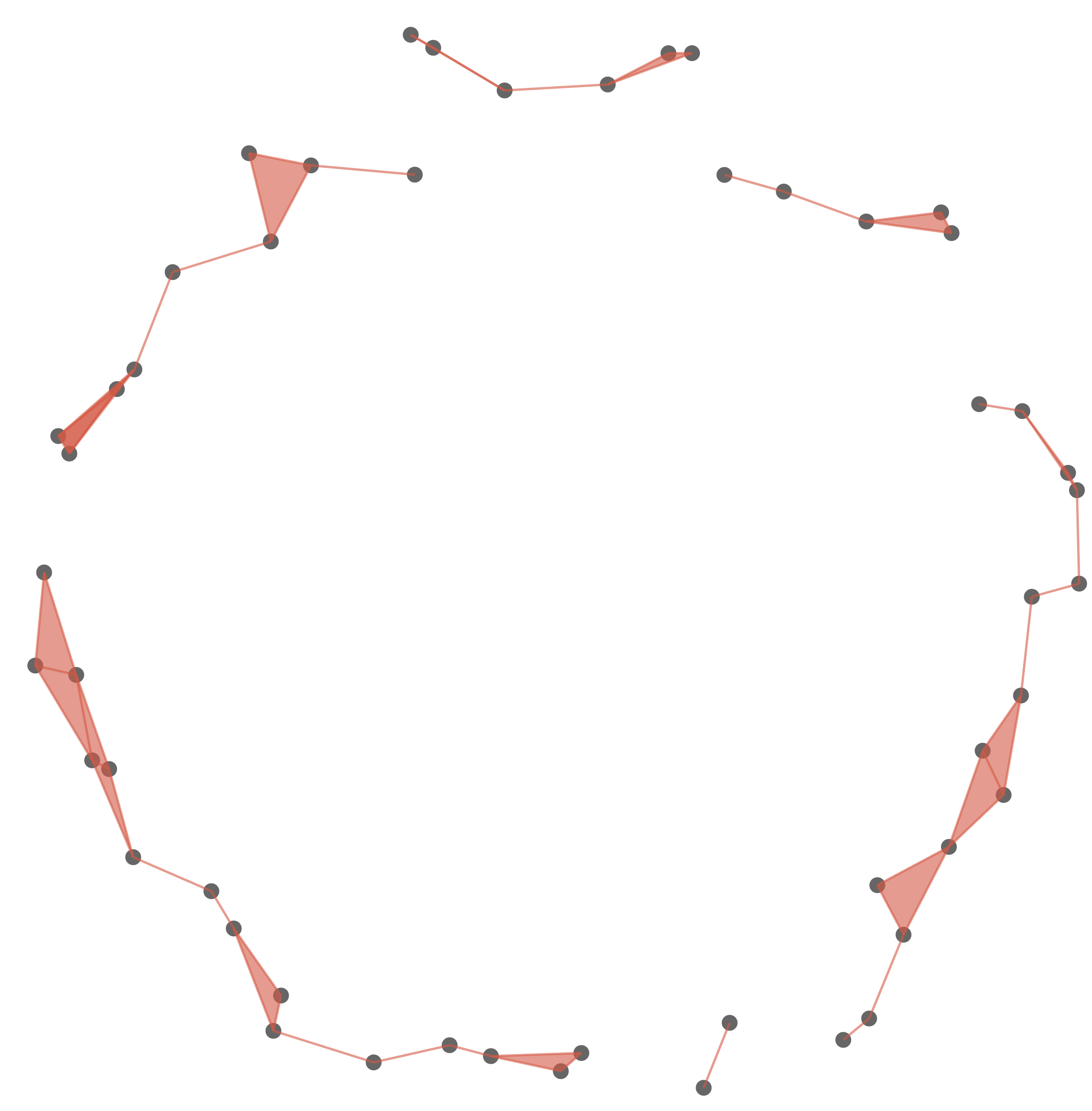}
\quad
\includegraphics[scale=0.1]{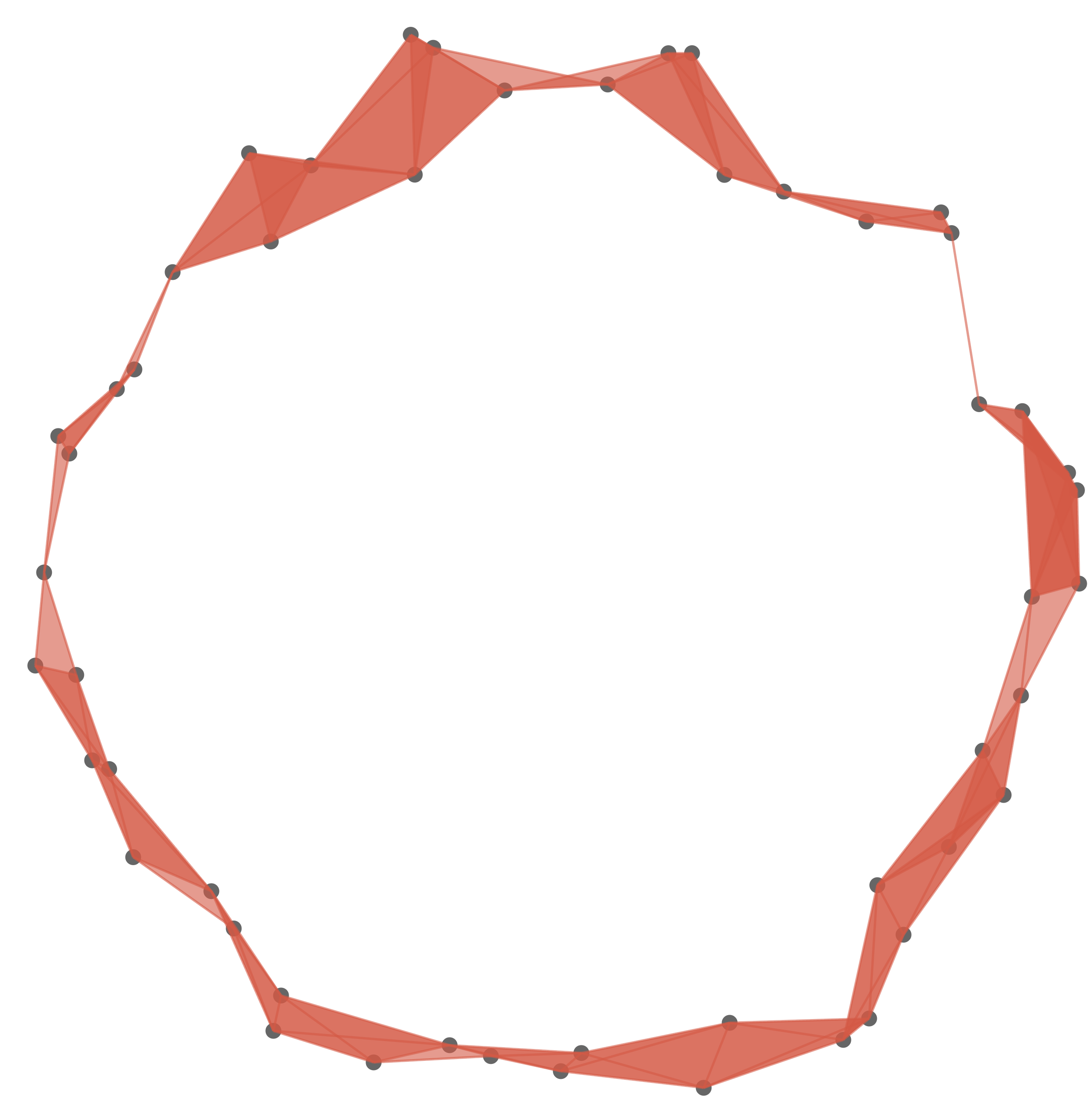}
\caption{Vietoris--Rips complexes on a point-cloud for a growing (left-to-right) scale $\beta$. As $\beta$ grows, the topology of the complex becomes more and more connected until it eventually becomes contractible.}
\label{fig:rips}
\end{figure}

The concept was initially introduced by L. Vietoris in 1927~\cite {Vietoris1927} and subsequently studied extensively by E. Rips, particularly in the context of hyperbolic groups. 
Despite its early 20th-century inception, it has only been within the last decade that these complexes have gained increasing popularity, especially within the applied topology and topological data analysis (TDA) communities. 
The computational simplicity of Vietoris–Rips complexes makes them a more palatable choice for applications compared to traditional alternatives like the \v{C}ech complexes and $\alpha$-complexes~\cite{ghrist2014elementary,edelsbrunner2022computational}. 

This combinatorial flexibility, however, is balanced by a theoretical cost: the topology of the Vietoris–Rips complex of a metric space---even a finite one---is generally poorly understood. Nonetheless, there have been noteworthy developments in the study of Vietoris–Rips complexes constructed for near Riemannian manifolds~\cite{hausmann1995vietoris,latschev2001vietoris,majhi2024demystifying}, metric graphs~\cite{majhi2023vietoris,KomendarczykMajhiMitra2025-shadow}, and general geodesic spaces of bounded Alexandrov curvature~\cite{MajhiStability}.

Hausmann's pioneering work established that any closed Riemannian manifold \(M\) is homotopy equivalent to its Vietoris–Rips complex \(\mathcal{R}_{\beta }(M)\) for sufficiently small scales \(\beta \) \cite{hausmann1995vietoris}. This fundamental result naturally motivated the \emph{finite reconstruction problem}: identifying the conditions under which $M$ remains homotopy equivalent to the Vietoris–Rips complex of a finite, dense sample.
Latschev in~\cite{latschev2001vietoris} addressed this problem by extending the reconstruction context to metric spaces close to $M$ in the Gromov--Hausdorff sense ~\cite{burago2022course}. 
Latschev’s Theorem states:
\emph{For a closed Riemannian manifold \(M\), there exists a constant \(\epsilon _{0}(M)>0\) such that for any scale \(0<\beta \le \epsilon _{0}(M)\), there exists a \(\delta (\beta )>0\) where any metric space \(S\) satisfying \(d_{\mathrm{GH}}(S,M)<\delta (\beta )\) yields a Vietoris--Rips complex $\mathcal{R}_{\beta}(S)$ homotopy equivalent to $M$}.
While this result highlights that the sampling threshold $\epsilon _{0}$ depends strictly on the intrinsic geometry of $M$, it remains purely qualitative and existential. 
More recently, the author of \cite{majhi2024demystifying} provided a quantitative and practical analogue of Latschev’s result for manifolds, which was subsequently extended to more general metric spaces with curvature bounds in~\cite{MajhiStability}.

\begin{figure}[htb]
\centering
\begin{subfigure}[t]{0.3\textwidth}
\centering
\begin{tikzpicture}[scale=1.2]
\path[name path=1] (3, 1.5) -- (0,1);
\filldraw[thick, fill=blue!10, draw=blue!50, name path=2] (0, 0) -- (2,0) -- (1,2) -- cycle;
\path[name path=3] (2.5, 2) -- (1.2,0.4);
\draw[thick, draw=blue!50, name intersections={of=1 and 2}] (3, 1.5) -- (intersection-1);
\draw[thick, draw=blue!50, name intersections={of=1 and 2}, dashed] (intersection-1) -- (intersection-2);
\draw[thick, draw=blue!50, name intersections={of=1 and 2}] (intersection-2) -- (0,1);
\draw[dashed, thick, draw=blue!50, name intersections={of=2 and 3}] (1.2, 0.4) -- (intersection-1);
\draw[thick, draw=blue!50, name intersections={of=2 and 3, name=E}, name intersections={of=1 and 3, name=F},shorten >=0.1cm] (E-1) -- (F-1);
\draw[thick, draw=blue!50, name intersections={of=1 and 3, name=F}, shorten <=0.1cm] (F-1) -- (2.5, 2);
\filldraw[fill=blue!10, draw=blue!80] (0, 0) circle (2pt);
\filldraw[fill=blue!10, draw=blue!80] (2, 0) circle (2pt);
\filldraw[fill=blue!10, draw=blue!80] (1, 2) circle (2pt);
\filldraw[fill=blue!10, draw=blue!80] (0, 1) circle (2pt);
\filldraw[fill=blue!10, draw=blue!80] (3, 1.5) circle (2pt);
\filldraw[fill=blue!10, draw=blue!80] (2.5, 2) circle (2pt);
\filldraw[fill=blue!10, draw=blue!80] (1.2, 0.4) circle (2pt);
\end{tikzpicture}
\end{subfigure}%
~ 
\begin{subfigure}[t]{0.3\textwidth}
\centering
\begin{tikzpicture}[scale=1.2]
\path[name path=1] (3, 1.5) -- (0,1);
\filldraw[thick, fill=blue!10, draw= blue!30, name path=2] (0, 0) -- (2,0) -- (1,2) -- cycle;
\path[name path=3] (2.5, 2) -- (1.2,0.4);
\draw[thick, draw=blue!30, name intersections={of=1 and 2}] (3, 1.5) -- (intersection-1);
\draw[thick, draw=blue!30, name intersections={of=1 and 2}] (intersection-2) -- (0,1);
\draw[thick, draw=blue!30, name intersections={of=2 and 3, name=E}, name intersections={of=1 and 3, name=F},shorten >=0.1cm] (E-1) -- (2.5,2);
\end{tikzpicture}
\end{subfigure}%
~
\begin{subfigure}[t]{0.3\textwidth}
\centering
\begin{tikzpicture}[scale=1.2]
\path[name path=1] (3, 1.5) -- (0,1);
\filldraw[thick, fill=blue!10, draw=blue!50, name path=2] (0, 0) -- (2,0) -- (1,2) -- cycle;
\path[name path=3] (2.5, 2) -- (1.2,0.4);
\draw[thick, draw=blue!50, name intersections={of=1 and 2}] (3, 1.5) -- (intersection-1);
\draw[thick, draw=blue!50, name intersections={of=1 and 2}] (intersection-1) -- (intersection-2);
\draw[thick, draw=blue!50, name intersections={of=1 and 2}] (intersection-2) -- (0,1);
\draw[thick, draw=blue!50, name intersections={of=2 and 3}] (1.2, 0.4) -- (intersection-1);
\draw[thick, draw=blue!50, name intersections={of=2 and 3, name=E}, name intersections={of=1 and 3, name=F},shorten >=0.1cm] (E-1) -- (F-1);
\draw[thick, draw=blue!50, name intersections={of=1 and 3, name=F}, shorten <=0.1cm] (F-1) -- (2.5, 2);
\draw[thick, draw=blue!50, name intersections={of=1 and 2, name=E}, name intersections={of=1 and 3, name=F}] (E-2) -- (1.2, 0.4);
\draw[thick, draw=blue!50, name intersections={of=1 and 2, name=E}, name intersections={of=1 and 3, name=F}] (E-1) -- (1.2, 0.4);
\draw[thick, draw=blue!50] (1.2, 0.4) -- (0,0);
\draw[thick, draw=blue!50] (1.2, 0.4) -- (2,0);
\filldraw[fill=blue!10, draw=blue!80] (0, 0) circle (2pt);
\filldraw[fill=blue!10, draw=blue!80] (2, 0) circle (2pt);
\filldraw[fill=blue!10, draw=blue!80] (1, 2) circle (2pt);
\filldraw[fill=blue!10, draw=blue!80] (0, 1) circle (2pt);
\filldraw[fill=blue!10, draw=blue!80] (3, 1.5) circle (2pt);
\filldraw[fill=blue!10, draw=blue!80] (2.5, 2) circle (2pt);
\filldraw[fill=blue!10, draw=blue!80] (1.2, 0.4) circle (2pt);
\filldraw[fill=red!10, draw=red!80,name intersections={of=1 and 2}] (intersection-1) circle (2pt);
\filldraw[fill=red!10, draw=red!80,name intersections={of=1 and 2}] (intersection-2) circle (2pt);
\filldraw[fill=red!10, draw=red!80,name intersections={of=1 and 3}] (intersection-1) circle (2pt);
\filldraw[fill=red!10, draw=red!80,name intersections={of=2 and 3}] (intersection-1) circle (2pt);
\end{tikzpicture}
\end{subfigure}
\caption{[Left] An abstract simplicial complex $\K$ with planar vertices has been depicted. 
[Middle] The shadow $\sh(\K)\subset\R^2$ has been shown as a subset of the plane. 
[Right] A triangulation of the shadow is shown. 
The new shadow vertices are shown in \textcolor{red}{red}.}
\label{fig:shadow}
\end{figure}
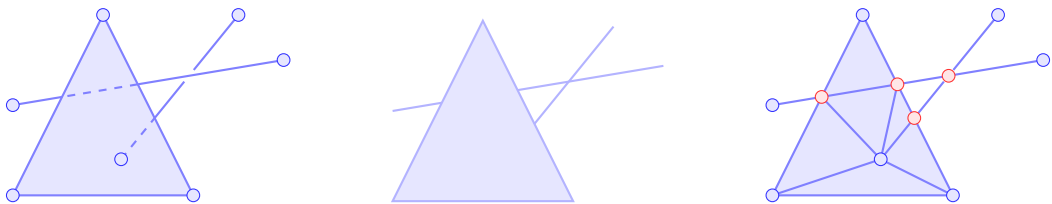

\subsection{Shadow of Complexes and Our Motivation}
Our theoretical study of Vietoris–Rips complexes and their shadows is motivated by the practical challenge of reconstructing the topology and geometry of a compact Euclidean ``shape'' from a finite, nearby point cloud ``sample''.
In practice, such point clouds typically lie on or near a simpler underlying shape $X\subset\R^N$; the sample $S\subset\R^N$ is described as \emph{noiseless} if it lies directly on the shape and \emph{noisy} otherwise. 
The relatively new field of \emph{shape learning} focuses on inferring the topological and geometric properties of the unknown shape $X$ from a finite point cloud $S$ sampled within the Hausdorff proximity (Definition~\ref{def:dH}) to $X$. 
\begin{definition}[The Hausdorff Distance]\label{def:dH}
Let $(X,d)$ be a metric space. 
Let $\mathcal A$ and $\mathcal B$ be compact, non-empty subsets. 
The \emph{Hausdorff distance} between them, denoted $d_\h(A, B)$, is defined as
\[
d^X_\h(A, B) \coloneqq 
\max\left\{\sup_{a\in A}\inf_{b\in B}d(a,b),\sup_{b\in B}\inf_{a\in A}d(a,b)\right\}.
\]
In case $X\subset\mathbb{R}^N$ and $ A, B,\mathcal{X}$ are all equipped with the Euclidean metric, we simply write $d_\h( A, B)$.
\end{definition}
In the last decade, the problem of shape reconstruction has received far and wide attention both in theoretical and applied literature; see, for instance
\cite{Amenta-Bern-Eppstein:1998,Dey:2006:CSR:1196751,Niyogi-Smale-Weinberger:2008,Chazal-Lieutier:2007,Chazal-Oudot:2008,Chazal-Cohen-Steiner-Lieutier:2009,kim2020homotopy,fasy2022reconstruction,majhi2024demystifying}.
In order to reconstruct an unknown shape $X$, the sample $S$ is commonly ``interpolated'' to compute a replica or ``reconstruction'' $\widehat{X}$ that is equivalent to $X$ in some appropriate sense (e.g., homotopy equivalent, homeomorphic, etc). 
The developments in shape reconstruction can be classified into two broad objectives: \emph{topological} and \emph{geometric}. 
While topological reconstruction concerns estimating only the topological features (e.g., homology/homotopy groups) of the underlying shape $X$ by computing an abstract topological object $\widehat{X}$ that is only topologically faithful (homotopy-equivalent) to $X$. 
To produce $\widehat{X}$, the Vietoris--Rips complex $\Ri_\beta(S)$---on the sample $S$ at an appropriate scale $\beta$---is commonly used in the topological data analysis community. 
Examples of homotopy-equivalent reconstruction results using Vietoris--Rips complexes include~\cite{majhi2023vietoris,majhi2024demystifying,MajhiStability,kim2020homotopy,attali2011vietoris}.

Topologically faithful reconstructions are primarily used to estimate the homological features of a hidden shape $X$, such as its Betti numbers and Euler characteristic.
However, the more ambitious paradigm of geometric reconstruction seeks to compute a subset of $\mathbb{R}^{N}$---a \emph{geometric embedding}---that is both (a) topologically faithful (homeomorphic or homotopy equivalent) and (b) geometrically close (in the Hausdorff distance) to $X$.
While abstract Vietoris–Rips complexes facilitate homotopy-equivalent reconstructions, they do not inherently provide an embedding within the host Euclidean space. 
For the geometric reconstruction of Euclidean shapes, it is more natural to consider the shadow of these complexes (as defined below).
In their recent work~\cite{majhi_shadow} on Euclidean graph reconstruction, the authors provide 
a provable algorithm leveraging the shadow of Vietoris--Rips complexes of a Hausdorff-close sample as the geometric embedding of the underlying graph.

\begin{definition}[Shadow]\label{def:shadow}
Let $\K$ be an abstract simplicial complex with vertices in $\R^N$, i.e., $\K^{(0)}\subset\R^N$. 
The \emph{shadow projection} map $p\colon\K\to\mathbb{R}^N$ sends a vertex $v\in\K^{(0)}$ to the corresponding point in $\mathbb{R}^N$, then extends linearly to all points of the geometric realization (abusing notation still denoted by) $\K$.
We define the \emph{shadow} of $\K$ as its image under the projection map $p$, i.e.,
\[
\sh(\K)\coloneqq
\bigcup_{\sigma=[v_0,v_1,\ldots,v_k]\in\K}\mathrm{Conv}(\sigma),
\]
where $\mathrm{Conv}(\cdot)$ denotes  the convex hull of a subset in $\mathbb{R}^N$. 
\end{definition}
Since the shadow is a polyhedral subset of $\R^N$, it can be realized by an at most $N$-dimensional simplicial complex, yet it may not admit a canonical triangulation.  Figure~\ref{fig:shadow} illustrates one such triangulation.

In the particular context of Vietoris--Rips complex of a Euclidean sample $S$, a study of geometric/topological behavior of the canonical projection map $p\colon\Ri_\beta(S) \to \sh(\Ri_\beta(S))$ is a natural yet deceptively non-trivial problem. 
The map $p$ often possesses complex ``singularities'' that have been resolved only for low-dimensional $\mathbb{R}^N$, $N\leq 3$~\cite{chambers_vietorisrips_2010,adamaszek_homotopy_2017}.  
The obstacle naturally leads us to shift our focus to the {\it system} of complexes: 
\[
\{ \sh(\Ri_{\beta}(S)) \mid \beta > 0, S\text{ finite subset of } X\}.
\]
This paper investigates the homotopy properties of the system. 
It turns out that in the limit ``$\beta \to 0$ and $S \to X$'', the ``limit map $\lim p$'' is well behaved with respect to homotopy/homology groups.

Shape theory (\cite{borsuk1975theoryofshape}, \cite{dydaksegal1978shapetheory} and \cite{MardesicSegal}), a homotopy theory for non-ANR 
spaces, provides us with a convenient framework. Although the spaces we study are mostly ANR spaces, the inverse system approach developed in \cite{MardesicSegal} provides a convenient language for their study. It is indeed possible to formulate our results in terms of the category Pro-HTOP (\cite{MardesicSegal}). Such a formalism is finer than that of our statement, yet we stay in the present form to avoid the technicality.

\subsection{Problem Setup}\label{problemSetup}
In the context of Euclidean shape reconstruction, the unknown underlying space \(M\subset \mathbb{R}^{N}\) is conveniently modeled as a Riemannian submanifold or an embedded graph, and the sample as a finite subset $S\subset\R^N$.

To facilitate a more general reconstruction framework, we consider $M$ to be a compact connected metric space in $\mathbb R^{N}$ with the induced metric $d_M$ from the Euclidean length structure.  
We assume that the metric space $(M,d_{M})$ is a 
length space, that is, a 
metric space such that, for each pair $p,q$ of points of $M$, there exists an isometry, called a geodesic, $c\colon [0,d_{M}(p,q)] \to M$ such that $c(0) = p,c(d_{M}(p,q)) = q.$  Furthermore, we assume that $M$ satisfies the conditions (M1)--(M3) as stated below. Throughout,  $\|\bullet\|$ denotes the standard Euclidean norm on $\mathbb R^N$.

\subsubsection*{Assumptions}\label{assumptions}

Let $(M,d_{M})$ be a compact metric space of $\mathbb R^{N}$ that admits a neighborhood $N(M)$ of $M$ and a retraction $\pi\colon N(M) \to M$. For $r>0$, let $N_{r}(M) = \{ x\in \mathbb{R}^N \mid \inf_{p\in M} \| x-p\| \leq r\}$.
We assume that
\begin{itemize}
\item[(M1)] there exists a $\rho(M) > 0$ such that any two maps $f,g\colon X\to M$ of a space $X$ to $M$ satisfying $d_{M}(f(x),g(x))< \rho(M)$ for each $x\in X$ are homotopic: $f \simeq g$.
\item[(M2)] there exist $\delta>0$ and $\xi \in (1,\infty)$ such that
for each $p,q\in M$ with $\Vert p-q\Vert < \delta$, we have
\[
\Vert p-q \Vert \leq d_{M}(p,q) \leq \xi \Vert p-q\Vert.
\]
\item[(M3)] for each $r>0$ with $N_{r}(M) \subset N(M)$, there exist an $\varepsilon_{r}>0$ with $\displaystyle \lim_{r\to 0}\varepsilon_{r} =0$ such that
\[
\Vert \pi(x) - x \Vert < \varepsilon_r
\]
for each $x\in N_{r}(M)$.
\end{itemize}
As shown in \cite[Section 4]{majhi2024demystifying} and \cite[Section 4]{majhi2023vietoris}, closed Euclidean submanifolds with induced metrics and compact Euclidean embedded graphs with finitely many edges with $\varepsilon$-path metrics (with small $\varepsilon$) satisfy the above conditions.

We denote by $\Ri^{\R^N}_\beta(S)$, $\Ri_\beta(M)$, and $\Ri^{\R^N}_\beta(M)$ the Vietoris--Rips complexes of $(S, \|\bullet\|)$, $(M,d_M)$, and $(M, \|\bullet\|)$, respectively, to ask most natural questions:
\begin{itemize}
\item[(a)] [\textbf{Hausmann-Type}] Is it true that 
$\sh(\Ri_\beta (M))$ or $\sh(\Ri^{\R^N}_\beta (M))$ is homotopy equivalent to $M$
for any sufficiently small $\beta>0$?

\item[(b)] [\textbf{Latschev-Type}] Is it true that 
$\sh(\Ri^{\R^N}_\beta (S))$ is homotopy equivalent to $M$ for any sufficiently small $\beta>0$ and for any sample set $S$ that is sufficiently Hausdorff distance-close to $M$?

\item[(c)] [\textbf{Shadow Projection}] Is it true that the map $p\colon\Ri^{\R^N}_\beta(S) \to \sh(\Ri^{\R^N}_\beta (S))$ is a homotopy equivalence for any sufficiently small $\beta>0$ and for any sample set $S$ that is sufficiently Hausdorff distance-close to $M$?
\end{itemize}
We do not know the answer to the above question in its full generality, but we show that the above are valid when we take appropriate direct and inverse limits with respect to $S$ and $\beta$.

\subsection{Outline}
Section~\ref{sec:preli} presents the relevant definitions of properties of direct and inverse systems of groups.
In Section~\ref{sec:VRLimit}, we prove very natural limit properties, such as Theorem~\ref{thm:weaktop}, for Vietoris--Rips complexes of a general (abstract) metric space.
Section~\ref{sec:shadowLimit} and Section~\ref{sec:shadowLimitNoisy} consider limits of the shadow of the Vietoris--Rips complexes of $M\subset\R^N$ (see~\ref{assumptions}) for the noiseless and noisy samples, respectively.
The main limit theorems of Section~\ref{sec:shadowLimit} are Theorem~\ref{thm:limitpi} and Theorem~\ref{limitproj} for the Vietoris--Rips shadow and the canonical projection map, respectively.
Section~\ref{sec:shadowLimitNoisy} correspondingly present in the noisy analogs, respectively, in Theorem~\ref{limitpi2} and Theorem~\ref{limitprojection2}.
Finally, Section~\ref{sec:recon} demonstrates in Theorem~\ref{onedim} the potential of Vietoris--Rips shadow for the geometric reconstruction of closed curves. We mostly focus on homotopy groups, but all results hold for homology groups as well.

\subsection{Notation}

Here we fix the notation used throughout the present paper.

$\Ri_\beta(X)$ denotes the Vietoris--Rips complex of a metric space $(X,d_{X})$ and let $\Ri_\beta^{\R^N}(S)$ be 
the Vietoris--Rips complex of $S\subset\R^N$ under the Euclidean metric. Geometric realizations endowed with the Whitehead topology are denoted by the same symbol for simplicity. 
For a simplicial complex $\K$ with $\K^{(0)} \subset \mathbb R^N$, $\sh(\K)$ denotes the shadow of the complex $\K$ and the shadow projection is denoted by
$p\colon \K \to \sh(\K)$.\\


For a subset $A$ of $M$, $\diam_{M}(A)$ denotes the diameter of $A$ with respect to the metric $d_M$.  For a subset $B$ of $\mathbb{R}^N$, $\diam_{\mathbb{R}^N}(B)$ denotes the Euclidean diameter of $B$.  
For $B\subset M$, $\conv(B)$ and $\conv_M(B)$ denote, respectively, the Euclidean and geodesic convex hulls of $B$.
For a continuous map $f\colon X\to Y$ between spaces $X$ and $Y$, the induced homomorphism between the homotopy groups is also denoted by $f:\pi_{m}(X) \to \pi_{m}(Y)$ for simplicity.

\section{Preliminaries on Direct and Inverse Systems of Groups}\label{sec:preli}
This section presents essential notation, definitions, and properties of direct and inverse systems of groups, as well as their limits. 

\begin{definition}[Direct Systems of Groups]\label{def:direct-l}
Let $\{G_\alpha\}_{\alpha \in \mathcal{I}}$ be a family of groups indexed by a partially ordered set $\mathcal{I}$ and, whenever $\alpha \preceq \beta $,  $f_{\alpha,\beta}\colon  G_\alpha \to G_\beta$ be a homomorphism such that: 
\begin{enumerate}[(i)]
\item $f_{\alpha, \alpha}$ is the identity homomorphism,
\item whenever $\alpha \preceq \beta \preceq \gamma$,  $f_{\alpha, \gamma} = f_{\beta, \gamma} \circ f_{\alpha, \beta}$, and
\item for every $\alpha, \beta \in \mathcal{I}$ there is $\gamma \in \mathcal{I}$ such that $\alpha, \beta \preceq \gamma$.
\end{enumerate}
Then $\{G_\alpha, f_{\alpha,\beta}\}$ is called a \emph{direct system} of groups and homomorphisms. 

The \emph{direct limit}, denoted $\varinjlim G_\alpha$, is the set of equivalence classes $[\cdot] $ on the disjoint union $\sqcup G_\alpha / \sim$, where the equivalence relation $\sim$ is generated by 
\[
x_{\alpha} \sim y_{\beta} ~(x_\alpha \in G_{\alpha}, y_\beta \in G_\beta) \Leftrightarrow f_{\alpha,\gamma}(x_{\alpha}) = f_{\beta,\gamma}(y_{\beta}) ~\mbox{for some}~\gamma \succeq \alpha,\beta.
\]
The group operation is defined by
\[
[x_{\alpha}] \cdot [y_{\beta}] = [f_{\alpha,\gamma}(x_{\alpha})\cdot f_{\beta,\gamma}(y_{\beta}) ]
\]
which is well defined due to (3) above.
Each $G_{\alpha}$ admits the canonical homomorphism $f_{\alpha}\colon G_{\alpha} \to \varinjlim G_\alpha$.  
The system $\{f_{\alpha}\colon G_{\alpha} \to \varinjlim G_\alpha\}$ forms the colimit in the category of groups.
\end{definition}

As a relevant example, one can consider $\mathbb{S}$ to be the partially ordered set of all non-empty finite subsets $S$ of a metric space $(X,d_X)$, ordered by set inclusion.
For a fixed scale $\beta>0$, note the natural inclusion between the Vietoris--Rips complexes $\iota_\beta^{S,T}\colon\Ri_\beta(S)\to\Ri_\beta(T)$ for any $S\subseteq T$.
So, for any $m\geq0$, the family of homotopy groups of the Vietoris-Rips complex $\left\{\pi_m(\Ri_\beta(S)), \iota_\beta^{S,T}\right\}$ will form a direct system.
The direct limit of the system is discussed in Remarks~\ref{rem:VR1} and \ref{rem:VR2}.

The direct limit group has the following characterization.
Let $\{G_\alpha, f_{\alpha,\beta}\}$ be a direct system of groups and assume that a homomorphism system $\{h_{\alpha}\colon G_\alpha \to H\}$ is given to a group $H$ so that $h_{\alpha} = h_{\beta}\circ f_{\alpha,\beta}$ for each $\alpha \preceq \beta$. Then the induced homomorphism
\[
\varinjlim_\alpha h_\alpha\colon\varinjlim G_\alpha \to H
\]
is an isomorphism if and only if
\begin{enumerate}
    \item for each $x\in H$, there exist $\alpha$ and $x_\alpha \in G_\alpha$ such that $h_{\alpha}(x_{\alpha}) = x$, and
    \item if $x_{\alpha}\in G_\alpha$ satisfies $h_{\alpha}(x_{\alpha}) = 1$, then there exists $\beta$ with $\alpha \preceq \beta$ such that $f_{\alpha,\beta}(x_{\alpha}) = 1$.
\end{enumerate}

\begin{definition}[Inverse Systems of Groups]\label{def:inverse-l}
Let $\{G_\alpha\}_{\alpha \in \mathcal{I}}$ be a family of groups indexed by a partially ordered set $\mathcal{I}$ and, whenever $\alpha \preceq \beta$, $f_{\alpha,\beta}\colon G_\beta \to G_\alpha$ be a homomorphism such that
\begin{enumerate}
    \item $f_{\alpha, \alpha}$ is the identity homomorphism,
    \item whenever $\alpha \preceq \beta \preceq \gamma$, $f_{\alpha,\gamma} = f_{\alpha, \beta} \circ f_{\beta,\gamma}$, and
    \item for every $\alpha, \beta \in \mathcal{I}$ there is $\gamma \in \mathcal{I}$ such that $\alpha, \beta \preceq \gamma$.
\end{enumerate}
Then $\{G_\alpha, f_{\alpha,\beta}\}$ is called an \emph{inverse system} of groups and homomorphisms, and the \emph{inverse limit} $\varprojlim G_\alpha$ is
the subset
\[
\left\{ (x_{\alpha}) \in \prod G_\alpha \mid
f_{\alpha,\beta}(x_\beta) = x_\alpha,~\mbox{whenever}~ \alpha \preceq \beta\right\}. 
\]
with the component-wise group operation. 
For each $\alpha$, the projection $f_\alpha\colon\varprojlim_{\alpha}G_{\alpha} \to G_\alpha$ is defined.  The system $\{ f_{\alpha}\colon\varprojlim_{\alpha}G_{\alpha} \to G_\alpha \}$ forms the limit in the category of groups.

\end{definition}

Let $\{G_\alpha, f_{\alpha,\beta}\}$ be an inverse system of groups and assume that a homomorphism system $\{h_{\alpha}\colon H\to G_\alpha\}$ is given from a group $H$ so that $h_{\alpha} = f_{\alpha,\beta}\circ h_{\beta}$ for each $\alpha \preceq \beta$. 
Let $f_\alpha\colon\varprojlim G_\alpha\to G_\alpha$ be the projection.
Then we have the induced homomorphism:
\[
\varprojlim_\alpha h_\alpha\colon H\to \varprojlim G_\alpha
\]
which satisfies $h_\alpha=f_\alpha\circ \varprojlim_\alpha h_\alpha$ for each $\alpha$. Explicitly, 
$\varprojlim_\alpha h_\alpha\ $ is defined by
\[
\left(\varprojlim_\alpha h_\alpha\right)\ (x) = (h_{\alpha}(x))_{\alpha},~~x\in H.
\]

\section{Vietoris--Rips Limit theorems}\label{sec:VRLimit}
In this section, we present limit theorems for Vietoris--Rips complexes.
Throughout this section, $(X, d_X)$ represents an arbitrary metric space. Our focus is on Vietoris-Rips complexes, but
these results extend to related simplicial constructions, such as \v{C}ech and $\alpha$-complexes \cite{edelsbrunner2022computational}.

The $m$-dimensional sphere and $(m+1)$-dimensional ball are denoted by $S^m$ and $D^{m+1}$ respectively.

\begin{proposition}\label{prop:VRlimit}
Let $X$ be a non-empty set and $\{\K(S)\mid S=\K(S)^{(0)}, S\subset X, \text{ finite}\}$ be a family of simplicial complexes such that there is an inclusion $\iota^{S,T}\colon\K(S)\to\K(T)$ if $S\subset T$. Then the natural inclusion
$\iota^{S}\colon\K(S) \to \K(X)$ induces an isomorphism
\[
\varinjlim_{S}\iota^{S}\colon \varinjlim_{S} \pi_m(\K(S)) \to \pi_m(\K(X))
\]
for each $m\geq 0$.
\end{proposition}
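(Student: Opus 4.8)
The plan is to reduce the statement to the abstract direct-limit isomorphism criterion recorded immediately after Definition~\ref{def:direct-l}, applied to the directed set of finite subsets $S\subset X$ ordered by inclusion, to the groups $G_S=\pi_m(\K(S))$ with bonding maps $f_{S,T}=\iota^{S,T}_{*}$, and to the target $H=\pi_m(\K(X))$ together with the maps $h_S=\iota^S_{*}$. Here I read $\K(X)=\bigcup_S\K(S)$, the colimit of the system in simplicial complexes, endowed with the weak (Whitehead) topology, so that it is a CW complex. The compatibility $h_S=h_T\circ f_{S,T}$ demanded by the criterion is immediate, since the inclusions compose: $\iota^S=\iota^T\circ\iota^{S,T}$ whenever $S\subset T$, and the finite subsets of $X$ form a directed set because the union of two finite sets is finite.

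The engine of the argument is the standard fact that every compact subset of a CW complex is contained in a finite subcomplex \cite{spanier1995algebraic}. From it I would first extract the following reduction, which is the one place the directedness is genuinely used: every finite subcomplex $L\subset\K(X)$ is contained in $\K(S)$ for some finite $S\subset X$. Indeed, $L$ has finitely many simplices $\tau_1,\dots,\tau_k$, and each $\tau_i$ lies in some $\K(S_i)$; putting $S=\bigcup_{i}S_i$ (finite) and invoking the inclusions $\K(S_i)\hookrightarrow\K(S)$ shows that every simplex of $L$, hence $L$ itself, sits inside $\K(S)$.

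Fixing a basepoint vertex $x_0\in X$ and restricting to the cofinal subfamily $\{S : x_0\in S\}$, I would then verify the two conditions of the criterion. For condition (1), a class $[f]\in\pi_m(\K(X))$ is represented by $f\colon(S^m,*)\to(\K(X),x_0)$; since $S^m$ is compact, $f(S^m)$ lies in a finite subcomplex and hence, by the reduction, in some $\K(S)$ with $x_0\in S$. Corestriction yields $g\colon S^m\to\K(S)$ with $f=\iota^S\circ g$, so $[f]=h_S([g])$. For condition (2), suppose $[g]\in\pi_m(\K(S))$ satisfies $h_S([g])=1$, i.e.\ $\iota^S\circ g$ extends to a map $H\colon D^{m+1}\to\K(X)$. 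As $D^{m+1}$ is compact, $H(D^{m+1})\subset\K(T_0)$ for some finite $T_0$; setting $T=S\cup T_0$, the map $H$ corestricts to $\K(T)$ and exhibits $f_{S,T}([g])=[\iota^{S,T}\circ g]=1$ in $\pi_m(\K(T))$. By the criterion, $\varinjlim_S\iota^S$ is an isomorphism for every $m\ge 1$, and the case $m=0$ follows from the identical compactness argument applied to $S^0$ and $D^1$, reading the statement in the category of pointed sets.

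The step I expect to be the main obstacle — or rather the only point requiring care — is the reduction of the second paragraph together with the justification that the compact-in-a-finite-subcomplex principle is available, which hinges on $\K(X)$ carrying the weak topology rather than, say, a metric topology; once this is secured, the two verifications are routine. A secondary bookkeeping point is the cofinality of $\{S : x_0\in S\}$, needed so that basepoints of the approximating complexes are chosen consistently; this is immediate, since any finite $S$ is contained in the finite set $S\cup\{x_0\}$.
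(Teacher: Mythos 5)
Your proposal is correct and takes essentially the same route as the paper: the paper also proves that every compact subset of $\K(X)$ (in the Whitehead topology) lies in a finite subcomplex, hence in some $\K(S)$ with $S$ finite, and then verifies exactly your two conditions of the direct-limit criterion from Definition~\ref{def:direct-l} by applying compactness to $S^m$ (surjectivity) and to $D^{m+1}$ (injectivity). The only differences are cosmetic: the paper reproves the compact-in-finite-subcomplex fact via a discreteness argument instead of citing it, and it passes from the finite subcomplex to $\K(S)$ by taking $S$ to be the vertex set, whereas you use directedness of the family---your explicit basepoint bookkeeping and the $m=0$ case are additional care the paper leaves implicit.
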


\begin{remark}\label{rem:VR1}
For a metric space $(X,d_X)$ and scale $\beta>0$, as a corollary, one can take $\K(S)=\Ri_\beta(S)$ to prove that $\varinjlim_{S} \pi_m(\Ri_\beta(S)) \cong \pi_m(\Ri_\beta(X))$
for each $m\geq 0$.
\end{remark}

\begin{proof}
Our proof is a straightforward modification of a standard fact on CW complexes.
The complex $(\K(X)$ has the Whitehead topology, that is the weak topology with respect to the simplices (with the standard topology):
a subset $F$ is closed if and only if $F\cap \sigma$ is closed in $\sigma$ for each simplex $\sigma$.
We first prove the following claim.\\
\textbf{Claim}: Every compact set $F$ of $\K(X)$ is contained in a finite subcomplex of $\K(X)$.\\

\noindent\textbf{Proof of Claim}: We first show that the set:
\[
\mathcal{F}\coloneqq\{ \sigma\mid \operatorname{Int}\sigma \cap F \neq \emptyset\}
\]
is finite. Here, $\operatorname{Int}\sigma$ denotes the simplex-interior (not the topological interior in the whole space) of $\sigma$.
We prove by contradiction. 

We suppose the contrary. Then, there are infinitely many simplices $\sigma_i$ and points $x_{i}\in \operatorname{Int}\sigma_{i} \cap F$.  Let $I\coloneqq \{x_{i}\}$. 
For each $\sigma$ of $\K(X)$, we have $\operatorname{Int}\sigma_{i}\cap \sigma \neq \emptyset \Rightarrow \sigma_{i} \subset \sigma$, i.e., $\sigma_i$ is a face of $\sigma$.  Hence, there are only finitely many $i$'s such that $\sigma_{i} \subset \sigma$. This implies that $I \cap \sigma$ is a finite set and, in particular, is a closed subset of $\sigma$.  This means that $I$ is a closed subset (by the definition of the topology) of $F$ and hence is compact.  The same proof shows that every subset of $I$ is closed. In other words, $I$ is a discrete space and therefore cannot be an infinite set by the compactness of $F$, which is a contradiction. This proves the claim.

The above claim implies the following inclusion:
\[
F\subset \bigcup_{\sigma\in \mathcal F}\operatorname{Int}\sigma 
\subset \bigcup_{\sigma\in \mathcal F}\sigma.
\]
Let $S$ be the set of all vertices of $\sigma \in \mathcal F$. Then $F \subset \K(S)$.

Using the above, we can show the geometric versions of the characteristic properties of direct limit as stated right after Definition~\ref{def:direct-l}.
\begin{itemize}
\item[(i)] For each map $f\colon S^{m}\to \K(X)$, there exists a finite subset $S$ of $X$ such that $\operatorname{Im}(f) \subset \K(S)$.
\item[(ii)] If a map $g\colon S^{m}\to \K(S)$, where $S$ is a finite subset of $X$, admits an extension $\bar{g}\colon D^{m+1} \to \K(X)$, then there exists a finite subset $S' \supset S$ such that $\operatorname{Im}(\bar{g}) \subset \K(S')$.
\end{itemize}
These two are characteristic properties of the direct limits, and the conclusion follows.
\end{proof}

The next theorem slightly generalizes the above in the following sense:
rather than considering all finite subsets, the same direct limit is obtained by successively adding points.


\begin{theorem}\label{thm:weaktop}
Assume that, for each separable space $Z$, there associates a simplicial complex $\K(Z)$ which satisfies the following condition:
\begin{enumerate}
\item$Z = \K(Z)^{(0)}$.
\item If $Z_{1} \subset Z_{2} \subset Z$, we have the inclusion
$\iota^{Z_{1},Z_{2}}\colon\K(Z_{1}) \to \K(Z_{2})$ and moreover,  
$\K(Z_{1}) = \{ \sigma\in \K(Z_{2})\mid \sigma^{(0)} \subset Z_{1} \}$. 
\item For each simplex $\sigma = [z_{0},\ldots,z_{n}]$ of $\K(Z)$ with vertices $z_{0},\ldots, z_{n}$, there exists an open neighborhood $U$ of $\{z_{0},\ldots, z_{n}\}$ in $Z$ such that for each finite set $\tau=\{w_{1},\ldots,w_{m}\} \subset U$,
the points of $\sigma\cup\tau$ span a simplex of $\K(Z)$.
\end{enumerate}
Let $D\coloneqq\{ p_{k}\mid k=1,2,\ldots\}$ be a countable dense subset of a separable space $X$ and let $S_{k}\coloneq \{ p_{i}\mid i=1,\ldots, k\}$. 
Then the system of inclusions $\{ \iota^{S_{k},X}\colon \K(S_{k}) \to \K(X) \mid k=1,2,\ldots \}$
induces an isomorphism:
\begin{equation}\label{eq:samplelimit}
\varinjlim_{k} \iota^{S_{k},X}\colon \varinjlim_{k}\pi_m(\K(S_{k})) \to \pi_m(\K(X)).
\end{equation}
\end{theorem}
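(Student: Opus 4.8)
The plan is to verify the two characterizing properties of the direct limit recorded immediately after Definition~\ref{def:direct-l}, applied to the direct system $\{\pi_m(\K(S_k)),\, (\iota^{S_k,S_l})_*\}$ together with the homomorphisms $h_k \coloneqq (\iota^{S_k,X})_* \colon \pi_m(\K(S_k)) \to H$, where $H \coloneqq \pi_m(\K(X))$. Concretely I must show: (i) every $f\colon S^m \to \K(X)$ is homotopic to a map whose image lies in some $\K(S_k)$; and (ii) if $g\colon S^m \to \K(S_k)$ becomes null-homotopic in $\K(X)$, then $\iota^{S_k,S_l}\circ g$ is already null-homotopic in $\K(S_l)$ for some $l\ge k$. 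I fix once and for all the basepoint to be the vertex $p_1 \in S_1$, which is a vertex of every $\K(S_k)$ and of $\K(X)$. Two ingredients drive everything: the compactness Claim proved inside Proposition~\ref{prop:VRlimit} (the image of a map from a compact space lies in a finite subcomplex), and a vertex-approximation step built from the density of $D$ and property~(3).

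The vertex-approximation step is as follows. Given a finite subcomplex $\K(T)$ with $T\subset X$ finite, property~(3) provides, for each of its finitely many simplices $\sigma$, an open neighborhood $U_\sigma$ of the vertex set of $\sigma$ such that $\sigma\cup\tau$ spans a simplex of $\K(X)$ whenever $\tau$ is a finite subset of $U_\sigma$. For each vertex $t\in T$ I set $U_t \coloneqq \bigcap_{\sigma \ni t} U_\sigma$, a finite intersection and hence an open neighborhood of $t$, and by density I choose $\phi(t)\in D\cap U_t$, taking $\phi(t)=t$ for any $t$ I wish to leave fixed (this is allowed precisely when $t\in D$). Property~(3) then guarantees that $\sigma\cup\phi(\sigma)$ spans a simplex of $\K(X)$ for every simplex $\sigma$ of $\K(T)$; in particular $\phi$ induces a simplicial map $\Phi\colon\K(T)\to\K(\phi(T))$ (using the full-subcomplex property~(2) to see that the faces of $\sigma\cup\phi(\sigma)$ spanned by $D$-vertices lie in $\K(\phi(T))$), and the inclusion $\K(T)\hookrightarrow\K(X)$ and $\Phi$ are contiguous, hence homotopic by a straight-line homotopy that fixes every vertex on which $\phi$ is the identity.

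For (i), the compactness Claim puts $\operatorname{Im}(f)$ inside a finite $\K(T)$ with $p_1\in T$; applying the approximation with $\phi(p_1)=p_1$ gives $f\simeq \Phi\circ f$ rel basepoint, and $\Phi\circ f$ has image in $\K(\phi(T))$ with $\phi(T)\subset D$ finite, hence $\phi(T)\subset S_k$ for some $k$, yielding a preimage of $[f]$ in $\pi_m(\K(S_k))$. For (ii), a null-homotopy of $\iota^{S_k,X}\circ g$ is an extension $G\colon D^{m+1}\to\K(X)$; the compactness Claim puts $\operatorname{Im}(G)$ in a finite $\K(T)$, which after enlarging I may assume contains $S_k$. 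Now I run the approximation with $\phi$ fixing $S_k$ pointwise and pushing only the remaining vertices of $T$ into $D$. Then $\Phi\circ G$ lands in $\K(\phi(T))\subset\K(S_l)$ for some $l\ge k$, and because $\phi$ restricts to the identity on $S_k$ the boundary restriction $(\Phi\circ G)|_{S^m}$ equals $\iota^{S_k,S_l}\circ g$; thus $\Phi\circ G$ exhibits $\iota^{S_k,S_l}\circ g$ as null-homotopic in $\K(S_l)$, which is (ii).

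I expect the main obstacle to be part (ii): the approximation must simultaneously move the vertices of $T\setminus D$ into $D$ and leave the vertices of $S_k$ (and the basepoint) fixed, all while preserving contiguity, and this is exactly what forces the simplex-by-simplex use of property~(3) and the finite intersections $U_t$; it is also what makes the pushed homotopy restrict correctly on the boundary, so that the conclusion is about $g$ itself rather than a perturbation of it. The remaining technical points are minor: checking that the contiguity homotopy can be taken rel basepoint, and noting that if $\phi$ fails to be injective then $\Phi$ is merely a simplicial collapse, which is still continuous and leaves the contiguity argument intact.
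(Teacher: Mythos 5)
Your proof is correct, and it takes a genuinely different technical route from the paper's. The paper channels both halves of the argument through Lemma~\ref{lem:weaktop}: it first invokes the Relative Simplicial Approximation Theorem to replace the continuous map $F$ on a triangulation of the pair $(P,Q)$ by a simplicial map $\Phi$ rel $Q$, and only then pushes the vertex images into $D$ via $v\mapsto p_{\Phi(v)}$, comparing the result with $\Phi$ by contiguity. You instead build the pushing map once and for all on the finite \emph{target} subcomplex: your vertex assignment $\phi\colon T\to D$ is obtained exactly as the paper's choice in~\eqref{closeness} (density of $D$ plus finite intersections of the neighborhoods furnished by hypothesis~(3)), but you observe that it induces a simplicial map $\Phi\colon\K(T)\to\K(\phi(T))$ contiguous to the inclusion $\K(T)\hookrightarrow\K(X)$, and that composing this $\Phi$ with the \emph{original} continuous map does all the work --- so no simplicial approximation of $F$ is ever needed. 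This also streamlines the injectivity step: since $\phi$ fixes $S_k\subset D$ pointwise, property~(2) identifies the fixed-vertex subcomplex of $\K(T)$ as $\K(S_k)$, hence $\Phi$ restricts to the identity there and $(\Phi\circ G)\lvert_{S^m}$ is \emph{literally} $\iota^{S_k,S_l}\circ g$, whereas the paper must arrange $G\lvert_{T_Q}=f$ by hand after approximating rel $T_Q$ (and must first make $f$ simplicial). What the paper's route buys is a reusable statement producing an honestly simplicial map $G$ on a triangulation of $P$; what yours buys is economy, with the only inputs being the compactness Claim from Proposition~\ref{prop:VRlimit}, property~(2) (to see that faces of $\sigma\cup\phi(\sigma)$ with vertices in $\phi(T)$ lie in $\K(\phi(T))$), and the straight-line contiguity homotopy, which is stationary on simplices spanned by $\phi$-fixed vertices and hence rel basepoint and rel $\K(S_k)$ as you need. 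Your closing remarks correctly dispose of the two genuine pitfalls: a non-injective $\phi$ merely makes $\Phi$ a simplicial collapse, and taking the basepoint $p_1\in D$ keeps everything based.
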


\begin{remark}\label{rem:VR2}
For a metric space $(X,d_X)$ and scale $\beta>0$, as a corollary, one can take $\K(S_k)=\Ri_\beta(S_k)$ to prove that $\varinjlim_{k} \pi_m(\Ri_\beta(S_k)) \cong \pi_m(\Ri_\beta(X))$
for each $m\geq 0$.
From our definition of Vietoris-Rips complexes, the diameter of each simplex in $\Ri_\beta(S_k)$ is \emph{strictly} less than $\beta$. Thus, condition (3) above is indeed satisfied.
For more on the distinction between `$<$' and `$\leq$' in the definition of Vietoris--Rips complexes, see~\cite{Adams_2019}.
\end{remark}

\begin{remark}\label{rem:VRThicken}
The metric thickening, denoted by $\Ri_{\beta}^{\mathfrak{M}}(X)$ in the present paper, of $(X,d_{X})$ with scale parameter $\beta$ was introduced in~\cite{Adamaszek2018}. 
There exists a natural continuous bijection $j\colon\Ri_{\beta}(X) \to \Ri^{\mathfrak{M}}_{\beta}(X)$ that induces an isomorphism in homotopy groups in all dimensions~\cite[Theorem 1]{gillespie2024}.  
For each finite subset $S$ of $X$, $\Ri^{\mathfrak{M}}_\beta(S)$ is homeomorphic to $\Ri_\beta(S)$, due to the compactness of $\Ri_\beta(S)$.
Combining these two, we see that the $\varinjlim \pi_m(\Ri^\mathfrak{M}_\beta(S))\cong\pi_m(\Ri^\mathfrak{M}_\beta(X))$ holds also for the metric thickening. 

\end{remark}

We start with a lemma.
\begin{lemma}\label{lem:weaktop}
Let $D$, $X$, $\K(\bullet)$, and $S_k$ be as defined in Theorem~\ref{thm:weaktop}.
Let $S$ be a finite subset of $X$.
Let $(P,Q)$ be a pair\footnote{Here, $Q$ is a subcomplex of $P$ with respect to a triangulation of $P$} of compact polyhedra and let $F\colon P \to \K(S)$ be a continuous map satisfying the following condition:
\begin{enumerate}
\item there exists a triangulation $T_Q$ of $Q$, an integer $k$ and $S_{k} \subset S$ such that $f\coloneqq F\lvert_Q\colon T_{Q} \to \K(S_{k})$ is a simplicial map.
\end{enumerate}
Then, there exist an integer $\ell > k$, a triangulation
$T_P$ of $P$ which contains $T_Q$ as a subcomplex and a simplicial map $G\colon T_{P} \to \K(S_{\ell})
$ such that
$G \simeq F$ rel. $Q\colon P\to  \K(S\cup S_{\ell})$.
\end{lemma}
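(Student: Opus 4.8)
The plan is to first replace the continuous map $F$ by a simplicial one via relative simplicial approximation, and then to perturb the vertices of that simplicial map off the arbitrary finite set $S$ and onto the dense sample set $D$, invoking condition (3) of Theorem~\ref{thm:weaktop} to preserve simpliciality while keeping the vertices lying over $Q$ fixed (so that the whole deformation is rel.\ $Q$). For the first step, observe that $\K(S)$ is a finite complex because $S$ is finite. After fixing a triangulation of $P$ having $T_Q$ as a subcomplex and applying the relative simplicial approximation theorem to $F\colon P\to\K(S)$ — whose restriction $f=F|_Q$ is already simplicial — I would obtain a subdivision $T_P$ of $P$ containing $T_Q$ as a subcomplex, together with a simplicial map $\phi\colon T_P\to\K(S)$ such that $\phi|_Q=f$ and $\phi\simeq F$ rel.\ $Q$ inside $\K(S)$.

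\textbf{Moving vertices onto $D$.} Let $V=\phi(T_P^{(0)})\subseteq S$ be the finite set of vertices used by $\phi$, and split it as $V=V_0\sqcup V_1$ with $V_0=V\cap S_k$ and $V_1=V\setminus S_k$. For every simplex $\sigma\in\K(S)$, condition (3) (applied with $Z=X$, using $\K(S)\subseteq\K(X)$) furnishes an open neighborhood $U_\sigma\subseteq X$ of $\sigma^{(0)}$ such that $\sigma^{(0)}\cup\tau$ spans a simplex of $\K(X)$ for every finite $\tau\subset U_\sigma$. For each $z\in V_1$ set $U_z=\bigcap\{U_\sigma\mid z\in\sigma^{(0)}\}$, which is a neighborhood of $z$ since $\K(S)$ is finite, and use the density of $D$ to pick $q_z\in D\cap U_z$. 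Define $r\colon V\to D$ by $r(z)=q_z$ for $z\in V_1$ and $r(z)=z$ for $z\in V_0\subseteq S_k\subseteq D$, and let $G$ be the vertex map $w\mapsto r(\phi(w))$. To check $G$ is simplicial, take a simplex $\delta$ of $T_P$: then $\phi$ carries its vertices onto $\sigma^{(0)}$ for some $\sigma\in\K(S)$, and $r(\sigma^{(0)})\subset U_\sigma$ (each $q_z$ lies in $U_z\subseteq U_\sigma$, each fixed vertex lies in $\sigma^{(0)}\subset U_\sigma$), so by condition (3) the set $\sigma^{(0)}\cup r(\sigma^{(0)})$ spans a simplex, whence its face $r(\sigma^{(0)})=G(\delta^{(0)})$ spans a simplex as well. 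Choosing $\ell>k$ large enough that $S_\ell\supseteq V_0\cup\{q_z\mid z\in V_1\}$ makes $G$ a simplicial map into $\K(S_\ell)$.

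\textbf{The homotopy.} I would connect $\phi$ and $G$ by the straight-line homotopy. For each simplex $\delta$ of $T_P$, both $\phi(\delta)$ and $G(\delta)$ lie in the single simplex $\Sigma$ spanned by $\sigma^{(0)}\cup G(\delta^{(0)})$, which belongs to $\K(S\cup S_\ell)$ by condition (2) since its vertices lie in $S\cup S_\ell$; hence $\phi$ and $G$ are contiguous, and $H(x,t)=(1-t)\phi(x)+tG(x)$, computed within these simplices, is a well-defined continuous homotopy $\phi\simeq G$ in $\K(S\cup S_\ell)$. Because $r$ fixes $V_0$ and the vertices over $Q$ satisfy $f(T_Q^{(0)})\subseteq V\cap S_k=V_0$, we have $G|_Q=\phi|_Q=f$, so $H$ is constant on $Q$, i.e.\ rel.\ $Q$. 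Composing $H$ with the homotopy $F\simeq\phi$ from the first step yields $F\simeq G$ rel.\ $Q$ as maps $P\to\K(S\cup S_\ell)$, which is exactly the assertion.

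\textbf{Main obstacle.} The crux is the vertex-replacement step: condition (3) is precisely the hypothesis that allows each vertex to be nudged off the arbitrary set $S$ onto the countable dense set $D$ without destroying simpliciality, and the bookkeeping must guarantee that the vertices sitting over $Q$ (which already lie in $S_k\subseteq D$) are never moved, so that the entire deformation is rel.\ $Q$. By contrast, the relative simplicial approximation of the first step and the contiguity homotopy of the last step are standard.
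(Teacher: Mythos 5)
Your proposal is correct and follows essentially the same route as the paper's proof: relative simplicial approximation to get a simplicial $\Phi\simeq F$ rel.\ $Q$, replacement of image vertices by nearby points of $D$ chosen via condition (3) (the paper picks $p_x$ for all $x\in S$ up front using the same finite intersection of neighborhoods you build per vertex), and the contiguity/straight-line homotopy in $\K(S\cup S_\ell)$, which is rel.\ $Q$ because the $S_k$-vertices over $Q$ are left fixed. The only difference is bookkeeping order (approximate first, then perturb, versus choosing the perturbation points before approximating), which does not change the argument.
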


\begin{proof}
Since $D$ is dense, using the assumption (3) of Theorem~\ref{thm:weaktop} and the finiteness of $S$ we observe the following: for each point $x\in S$, we can choose a point $p_{x}\in D$ such that, for any $x_{0},\ldots,x_{n} \in S$
\begin{equation}\label{closeness}
[x_{0},\ldots,x_{n}] \in \K(S) \Rightarrow 
[x_{0},\ldots,x_{n},p_{x_0},\ldots,p_{x_n}] \in \K(S\cup\{p_{x_0},\ldots,p_{x_n}\}).
\end{equation}
We take a sufficiently large $\ell>k$ such that 
\[
\{p_{x} \mid x\in S\} \subset S_\ell.
\]
By the Relative Simplicial Approximation Theorem (cf. \cite[the paragraph after Theorem 2C.1]{hatcher2002book}, we may find a triangulation $T_P$ of $P$ that contains $T_Q$ as a subcomplex and a simplicial map $\Phi\colon T_{P}\to \K(S)$ such that
\[
\Phi \simeq F  ~~rel. T_{Q}.
\]
In particular $\Phi\lvert_{T_{Q}} = f$.  For each vertex $v\in T_P$, we define
$G(v)$ by
\begin{equation}\label{G}
G(v) = p_{\Phi(v)}, 
\end{equation}
where
\[
~~\mbox{if}~v\in T_{Q},~\mbox{then we choose}~G(v) = f(v) \in S_{k} \subset S_{\ell}.
\]
If $\sigma=[v_{0},\ldots, v_{n}]$ is a simplex of $T_P$, then $[\Phi(v_{0}),\ldots, \Phi(v_{n})] \in \K(S)$.
From (\ref{closeness}), we obtain 
\[
[\Phi(v_{0}),\ldots, \Phi(v_{n}), p_{\Phi(v_{0})},\ldots,p_{\Phi(v_{n})}] \in \K(S\cup S_\ell).
\]
In particular, $G$ on the vertices $T_P^{(0)}$ defined by (\ref{G}) induces a simplicial map $G\colon T_{P}\to \K(S_{\ell})$. In addition, the above shows that the set of vertices $\{\Phi(v_{0}),\ldots, \Phi(v_{n}), p_{\Phi(v_{0})},\ldots,p_{\Phi(v_{n})}\}$ spans a simplex of $\K(S \cup S_{\ell})$. Hence $\Phi$ and $G$ are contiguous simplicial maps to  $\K(S \cup S_{\ell})$ and $\Phi\lvert_{T_{Q}} = f\lvert_{T_{Q}}$.  Hence 
\[
G\simeq \Phi \simeq F~rel.~Q.
\]
This proves the lemma.
\end{proof}

We now provide the proof of Theorem~\ref{thm:weaktop}.
\begin{proof}[Proof of Theorem~\ref{thm:weaktop}]
In order to prove~\eqref{eq:samplelimit}, we take an arbitrary map $F\colon S^{m}\to \K(X)$. Following the proof of Proposition~\ref{prop:VRlimit}, there exists a finite subset $S$ of $X$ such that $\operatorname{Im}(F) \subset \K(S)$.  Applying Lemma~\ref{lem:weaktop} to the pair  of polytopes $(P,Q) = (S^{m},\emptyset)$, we find an integer $\ell$ and $G\colon S^{m}\to \K(S_{\ell})$ such that $G\simeq F\colon S^{m} \to \K(S\cup S_{\ell}) \to \K(X)$.


Now let us assume that a map $f\colon S^{m}\to \K(S_{k})$ is given so that $f\simeq 0\colon S^{m}\to \K(X)$, i.e., $f$ is null homotopic in $\K(X)$. 
Taking a simplicial approximation, we can assume at the beginning that $S^{m}$ has a triangulation, denoted by $T_{S^m}$, and $f\colon T_{S^m} \to \K(S_{k})$ is a simplicial map.  

Since the map $f$ admits an extension $F\colon D^{m+1}\to \K(X)$, following the proof of Proposition~\ref{prop:VRlimit}, we can find a finite subset $S$ of $X$ such that $S\supset S_k$ and $\operatorname{Im}(F) \subset \K(S)$.  

Applying Lemma~\ref{lem:weaktop} to the pair $(P,Q) = (D^{m+1},S^{m})$ and $(F,f)$, we find an integer $\ell$ and a map $G\colon D^{m+1}\to \K(S_{\ell})$ that is an extension of $f$.  
Hence, $f$ is null homotopic as a map $S^{m}\to \K(S_{\ell})$ in $\K(S\cup S_\ell)$, therefore in $\K(X)$.
This proves~\eqref{eq:samplelimit}.
\end{proof}


\section{Shadow Limit Theorems for noiseless samples}\label{sec:shadowLimit}

Let us denote by $\mathbb{S}$ the directed set of all finite subsets of $M$ ordered by inclusion.
For $S, T \in \mathbb{S}$ and $\beta>0$, 
let
\[
\iota_{\beta}^{S,T}\colon \sh(\Ri_\beta (S)) \to \sh(\Ri_\beta (T)) 
\]
be the inclusion.  
To simplify the notation, the induced homomorphism in homotopy groups is also denoted by $\iota_{\beta}^{S,T}\colon \pi_{m}(\sh(\Ri_\beta (S))) \to \pi_{m}(\sh(\Ri_\beta (T)))$.

Taking the $m$-homotopy groups of $\sh(\Ri_\beta (S))$ for  $S\in \mathbb{S},$ and homomorphisms induced by inclusions, we obtain a direct system
\[
\left\{\pi_{m}(\sh(\Ri_\beta (S))), \iota_{\beta}^{S,T}\colon \pi_{m}(\sh(\Ri_\beta (S))) \to \pi_{m}(\sh(\Ri_\beta (T))) \mid S,T \in \mathbb{S}, S\subset T \right\}
\]
for which the direct limit 
\[
\pi_{m}(\sh(\Ri_\beta (\mathbb{S}))) \coloneq  \varinjlim_{S\in \mathbb{S}} \pi_{m}(\sh(\Ri_\beta (S))) \]
with the canonical homomorphism $\iota_{\beta}^{S,\mathbb{S}}\colon \pi_{m}(\sh(\Ri_\beta (S))) \to \pi_{m}(\sh(\Ri_\beta (\mathbb{S}))$ is natually defined.
\begin{remark}
\begin{enumerate}
\item If the union
\[
\sh(\Ri_\beta (\mathbb{S})) \coloneq  \cup_{S\in \mathbb{S}} \sh(\Ri_\beta (S)) 
\]
is endowed with the weak topology with respect to the collection $\{ \sh(\Ri_\beta (S)) \mid S \in \mathbb{S} \}$, then $\pi_{m}(\sh(\Ri_\beta(\mathbb{S})))$ is isomorphic to the group $\pi_{m}(\sh(\Ri_\beta (\mathbb{S})))$, which justifies the above notation.
\item We may take homology groups to obtain a corresponding group for homology.  
\end{enumerate}
\end{remark}


We fix $\beta_0$ such that $\sh(\Ri_{\beta_0} (M)) \subset N(M)$, where $N(M)$ is the neighborhood of $M$ as defined in~\ref{assumptions}~(M1), and assume throughout this section that $0< \beta < \beta_0$. 
When $\sh(\Ri_{\beta}(M)) \subset N(M)$, the restriction of $\pi\colon N(M)\to M$ to $\sh(\Ri_{\beta}(M))$ is denoted by $\pi_\beta$.  

For $S\in \mathbb{S}$, let 
\[
\pi_{\beta}^{S}\colon \sh(\Ri_\beta (S)) \to M
\]
be the restriction of $\pi\colon N(M)\to M$.  
For $S,T\in \mathbb S$ with $S\subset T$, 
we get the following commutative diagram:
\[
\xymatrix{
  \sh(\Ri_\beta (T))  \ar[rd]^{\pi_{\beta}^T}
\\
& M\\
\sh(\Ri_{\beta}(S)) \ar[uu]^{\iota_{\beta}^{S,T}}  \ar[ru]_{\pi_{\beta}^S}
%
%
%
%
}
\]
The above yields a corresponding commutative diagram for the $m$-homotopy groups, and we obtain the limit homomorphism
\[
\pi_{\beta}^{\mathbb S}\colon \pi_{m}(\sh(\Ri_{\beta}(\mathbb{S})))  \to \pi_{m}(M).
\]
For $0< \gamma < \beta$ and $S_{1},S_{2} \in \mathbb S$ with $S_1\subset S_2$, we have the commutative diagram:
\[
\xymatrix{
  \sh(\Ri_\gamma (S_{1}))  \ar[r]^{\iota_{\gamma}^{S_{1},S_{2}}} \ar[d]_{\iota_{\beta,\gamma}^{S_{1}}} & \sh(\Ri_\gamma(S_{2}) \ar[d]^{\iota_{\beta,\gamma}^{S_{s}}}
\\
\sh(\Ri_{\beta}(S_{1})) \ar[r]_{\iota_{\beta}^{S_{1},S_{2}}}  & \sh(\Ri_\beta(S_{2}))
}
\]
where all arrows indicate appropriate inclusions.
From the above, we see that, for $0< \gamma < \beta$, the direct limit of the inclusion $\iota_{\beta,\gamma}^{S}\colon \sh(\Ri_{\gamma}(S)) \to \sh(\Ri_{\beta}(S))$ 
induces the homomorphism
\[
\iota_{\beta,\gamma}^{\mathbb S}\colon \pi_{m}(\sh(\Ri_{\gamma}(\mathbb{S}))) 
\to \pi_{m}(\sh(\Ri_{\beta}(\mathbb{S}))),
\]
which makes the following diagram commutative:
\[
\xymatrix{
  \pi_{m}(\sh(\Ri_\gamma (\mathbb{S})))  \ar[rd]^{\pi_{\gamma}^{\mathbb{S}} }
\ar[dd]_{\iota_{\beta,\gamma}^{\mathbb S}}
\\
& \pi_{m}(M)\\
\pi_{m}(\sh(\Ri_{\beta}(\mathbb{S}))) \ar[ru]_{\pi_{\beta}^{\mathbb{S}}}
}
\]
We obtain an inverse system:
\[
\left\{ \pi_{m}(\sh(\Ri_{\beta}(\mathbb{S}))), \iota_{\beta,\gamma}^{\mathbb S}\colon \pi_{m}(\sh(\Ri_{\gamma}(\mathbb{S}))) \to \pi_{m}(\sh(\Ri_{\beta}(\mathbb{S}))) \mid 0 < \gamma < \beta <\beta_{0} \right\}
\]
and the inverse limit group 
\[
\varprojlim_{\beta} \pi_{m}(\sh(\Ri_{\beta}(\mathbb{S})))
\]
with the canonical homomorphism $\displaystyle \iota_{\beta,\infty}^{\mathbb S}\colon \varprojlim_{\beta} \pi_{m}(\sh(\Ri_{\beta}(\mathbb{S}))) \to \pi_{m}(\sh(\Ri_{\beta}(\mathbb{S})))$.

Moreover, we obtain a homomorphism:
\begin{equation}\label{eq:pi_inf_S_to_M}
\pi_{\infty}^{\mathbb S}\colon  \varprojlim_{\beta} \pi_{m}(\sh(\Ri_{\beta}(\mathbb{S}))) \to \pi_{m}(M),
\end{equation}
defined by
\[
\pi_{\infty}^{\mathbb S} = \pi_{\beta}^{\mathbb S} \circ \iota_{\beta,\infty}^{\mathbb S},
\]
where we observe that, if $\gamma < \beta < \beta_0$, then
\[
\pi_{\beta}^{\mathbb S}\circ \iota_{\beta,\infty}^{\mathbb S } 
=\pi_{\beta}^{\mathbb S}\circ \iota_{\beta,\gamma}^{\mathbb S } \circ \iota_{\gamma}^{\mathbb S}
= \pi_{\gamma}^{\mathbb S}\circ \iota_{\gamma,\infty}^{\mathbb S}.
\]
Thus, the above definition~\eqref{eq:pi_inf_S_to_M} does not depend on $\beta$.

Our first limit theorem for Vietoris--Rips shadow is stated as follows:
\begin{theorem}\label{thm:limitpi}
The homomorphism 
$\pi_{\infty}^{\mathbb S}\colon  \varprojlim \pi_{m}(\sh(\Ri_{\beta}(\mathbb{S}))) \to \pi_{m}(M)$ is an isomorphism for each $m\geq0$.
\end{theorem}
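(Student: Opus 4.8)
The plan is to reduce the isomorphism to three elementary compatibility facts about the inclusion $j_\beta\colon M\hookrightarrow \sh(\Ri_\beta(\mathbb{S}))$ of $M$ as the vertex set, namely: (a) $\pi_\beta^{\mathbb S}\circ (j_\beta)_* = \mathrm{id}$ on $\pi_m(M)$ (a section); (b) for each $\beta$ there is a threshold $\gamma_0(\beta)$ so that for $0<\gamma<\gamma_0(\beta)$ one has $\iota_{\beta,\gamma}^{\mathbb S} = (j_\beta)_*\circ\pi_\gamma^{\mathbb S}$; and (c) $\iota_{\beta,\gamma}^{\mathbb S}\circ (j_\gamma)_* = (j_\beta)_*$ for all $\gamma<\beta$. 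Granting these, surjectivity follows by sending $\alpha\in\pi_m(M)$ to the family $((j_\beta)_*\alpha)_\beta$, which lies in the inverse limit by (c) and maps to $\alpha$ by (a); injectivity follows since, if $(x_\beta)$ satisfies $\pi_\beta^{\mathbb S}(x_\beta)=0$ for all $\beta$, then fixing $\beta$ and choosing $\gamma<\gamma_0(\beta)$ gives $x_\beta = \iota_{\beta,\gamma}^{\mathbb S}(x_\gamma) = (j_\beta)_*\pi_\gamma^{\mathbb S}(x_\gamma) = 0$ by (b).

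The technical heart is to make $(j_\beta)_*$ and the identity in (b) meaningful on the \emph{direct}-limit groups $\pi_m(\sh(\Ri_\beta(\mathbb S)))=\varinjlim_S\pi_m(\sh(\Ri_\beta(S)))$, since a map into $M$ spreads over infinitely many vertices and need not land in a finite subshadow. I would resolve this with a \emph{net retraction}: fix a finite $\eta$-net $T\subset M$ (in $d_M$) with $2\eta<\beta$ and a partition of unity $\{\phi_t\}_{t\in T}$ subordinate to $\{B_{d_M}(t,\eta)\}$, and set $r(x)=\sum_t\phi_t(x)\,t$. Since the active centres at any $x$ lie pairwise within $2\eta<\beta$ in $d_M$, they span a simplex, so $r$ maps $M$ continuously into the finite shadow $\sh(\Ri_\beta(T))$, and $\|r(x)-x\|<\eta$ by (M2). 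Thus $(j_\beta)_*\alpha$ is represented at finite level by $r\circ g$ for $g\colon S^m\to M$ representing $\alpha$, and (a) holds because $\|\pi(r(x))-x\|<\eta+\varepsilon_\eta$ yields $d_M(\pi\circ r,\mathrm{id}_M)<\xi(\eta+\varepsilon_\eta)<\rho(M)$ for small $\eta$, whence $\pi\circ r\simeq\mathrm{id}_M$ by (M1).

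The main obstacle is (b), which I would prove at finite level by a straight-line homotopy toward $M$. Given $z\in\conv(\sigma)$ with $\sigma=[v_0,\dots,v_n]\in\Ri_\gamma(S)$, the simplex has Euclidean diameter $<\gamma$ by (M2), so $z\in N_\gamma(M)$ and $\|z-\pi(z)\|<\varepsilon_\gamma$ by (M3); hence $\|v_i-\pi(z)\|<\gamma+\varepsilon_\gamma$ and, for $\gamma$ small, $d_M(v_i,\pi(z))\le \xi(\gamma+\varepsilon_\gamma)$ by (M2). Combining with the $\eta$-net estimate, the vertex set $\{v_0,\dots,v_n\}\cup\{t\in T:\phi_t(\pi(z))>0\}$ has $d_M$-diameter $<\beta$ once $\xi(\gamma+\varepsilon_\gamma)+\eta<\beta$, so it spans a simplex of $\Ri_\beta(S\cup T)$. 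Consequently $H(z,s)=(1-s)\,z+s\,(r\circ\pi)(z)$ stays in $\sh(\Ri_\beta(S\cup T))$ and furnishes a \emph{finite-level} homotopy $\iota_{\beta,\gamma}\simeq r\circ\pi_\gamma$; passing to $\pi_m$ and recognizing $[r\circ\pi\circ f]=(j_\beta)_*[\pi\circ f]=(j_\beta)_*\pi_\gamma^{\mathbb S}[f]$ yields (b). The delicate points are the bookkeeping of the three small parameters $\gamma$, $\varepsilon_\gamma$, $\eta$ against $\beta$ (and checking $N_\gamma(M)\subset N(M)$ and $\xi(\gamma+\varepsilon_\gamma)+\eta<\delta$ so that (M2)/(M3) apply), together with verifying that these constructions are independent, up to homotopy in larger shadows, of the chosen net and partition of unity, which is precisely what makes $(j_\beta)_*$ well defined on the direct limit; fact (c) then follows from the same straight-line comparison applied to two net retractions of $M$.
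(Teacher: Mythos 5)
Your proposal is correct and follows essentially the same route as the paper: your net retraction $r$ built from a partition of unity over a finite $\eta$-net is precisely the paper's map $f_{\beta}^{S}$ for a $\beta/2$-dense finite sample, your facts (a), (b), (c) correspond to Lemma~\ref{pifbs}(1), Lemma~\ref{pifbs}(2), and Lemma~\ref{inclusions} respectively, and the direct/inverse limit bookkeeping matches the paper's proof of Theorem~\ref{thm:limitpi}. The only cosmetic differences are that you keep the net $T$ separate from the sample $S$ (working in $\sh(\Ri_{\beta}(S\cup T))$, which is harmless in the direct limit) and you shrink $\gamma$ below a threshold $\gamma_{0}(\beta)$ where the paper instead enlarges the target scale to $\nu_{\beta}$ and then chooses $\gamma$ with $\nu_{\gamma}=\beta$ --- the same reparametrization.
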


\begin{remark}
The same holds for homology.
\end{remark}

Before we give a proof of the theorem in Section~\ref{sec:Latschev}, we first present a special case thereof in the spirit of Hausmann's theorem~\cite[Theorem~3.5]{hausmann1995vietoris} for Vietoris--Rips complexes.


\subsection{Hausmann-Type Limit Theorem for Shadow}
The following proposition may be regarded as a corollary of Theorem~\ref{thm:limitpi} in essence, yet we give a proof prior to Theorem~\ref{thm:limitpi}, because it well demonstrates the idea of our argument of the present paper.

In light of the discussion above, we obtain an inverse system 
\[
\left\{ \pi_{m}(\sh(\Ri_{\beta}(M))), \iota_{\beta,\gamma}\colon \pi_{m}(\sh(\Ri_{\gamma}(M))) \to \pi_{m}(\sh(\Ri_{\beta}(M))) \mid 0 < \gamma < \beta <\beta_{0} \right\}
\]
and the inverse limit group 
\[
\varprojlim_{\beta} \pi_{m}(\sh(\Ri_{\beta}(M)))
\]
and the canonical homomorphism $\iota_{\beta,\infty}\colon \varprojlim \pi_{m}(\sh(\Ri_{\beta}(M))) \to \pi_{m}(\sh(\Ri_{\beta}(M)))$.
For a $\beta>0$, let 
$\jmath_{\beta}\colon M \to \sh(\Ri_{\beta}(M))$ be the inclusion. For each $\beta,\gamma>0$ with $\gamma < \beta$, we have $j_\beta = \iota_{\beta,\gamma}\circ j_\gamma$.
Consequently, the inclusions $\jmath_\beta\colon\pi_m (M)\to\pi_m(\sh(\Ri_\beta(M))$ induce homomorphisms
\[
\varprojlim_\beta \jmath_{\beta}\colon\pi_m (M)\to\varprojlim_\beta\pi_m(\sh(\Ri_\beta(M)).
\]

Similarly to \eqref{eq:pi_inf_S_to_M}, we define the homomorphism 
\begin{equation}\label{eq:pi_infty}
\pi_\infty\colon\varprojlim_{\beta}\pi_{m}(\sh({\Ri_{\beta}(M)})) \to \pi_{m}(M) 
\end{equation} 
by $\displaystyle \pi_{\infty} = \pi_{\beta}\circ \iota_{\beta,\infty}$.
In the following proposition, $\pi_{\infty}$ is established to be an isomorphism by showing that  $\varprojlim_{\beta}\jmath_\beta$ is its inverse.

\begin{proposition}\label{prop:SbetaM}
The homomorphism 
\[
\pi_{\infty}\colon \varprojlim_{\beta}\pi_{m}(\sh({\Ri_{\beta}(M)})) \to \pi_{m}(M)
\]
is an isomorphism.
\end{proposition}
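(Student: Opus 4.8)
The plan is to verify that the homomorphism $\varprojlim_{\beta}\jmath_{\beta}$ is a two-sided inverse of $\pi_{\infty}$, exactly as anticipated in the paragraph preceding the statement. One composite is immediate. Since $\pi_{\infty}=\pi_{\beta}\circ\iota_{\beta,\infty}$ and $\iota_{\beta,\infty}\circ\varprojlim_{\beta}\jmath_{\beta}=\jmath_{\beta}$ is the $\beta$-th component of the limit map, the composite $\pi_{\infty}\circ\varprojlim_{\beta}\jmath_{\beta}$ equals $\pi_{\beta}\circ\jmath_{\beta}$, which is induced on $\pi_m$ by $\pi\circ\jmath_{\beta}\colon M\hookrightarrow\sh(\Ri_{\beta}(M))\to M$. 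As $\pi$ is a retraction onto $M$, this equals $\mathrm{id}_{M}$, so $\pi_{\infty}\circ\varprojlim_{\beta}\jmath_{\beta}=\mathrm{id}_{\pi_m(M)}$.

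For the reverse composite I would invoke the universal property of the inverse limit: it suffices to show $\iota_{\beta,\infty}\circ\bigl(\varprojlim_{\beta}\jmath_{\beta}\bigr)\circ\pi_{\infty}=\iota_{\beta,\infty}$ for every $\beta<\beta_0$. The left-hand side simplifies to $\jmath_{\beta}\circ\pi_{\infty}$, and writing $\pi_{\infty}=\pi_{\gamma}\circ\iota_{\gamma,\infty}$ for a chosen $\gamma<\beta$ (recall this is independent of the scale), it becomes $\jmath_{\beta}\circ\pi_{\gamma}\circ\iota_{\gamma,\infty}$. Thus everything reduces to the key claim that, for $\gamma$ small enough relative to $\beta$, the two maps
\[
\jmath_{\beta}\circ\pi_{\gamma},\ \iota_{\beta,\gamma}\colon\sh(\Ri_{\gamma}(M))\to\sh(\Ri_{\beta}(M))
\]
induce the same homomorphism on $\pi_m$; granting this, the expression becomes $\iota_{\beta,\gamma}\circ\iota_{\gamma,\infty}=\iota_{\beta,\infty}$, as required.

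I would establish the claim through the explicit straight-line homotopy $H(x,t)=(1-t)\,x+t\,\pi(x)$ in $\mathbb{R}^N$, which interpolates between $\iota_{\beta,\gamma}$ at $t=0$ and $\jmath_{\beta}\circ\pi_{\gamma}$ at $t=1$, and which is stationary on $M$ (where $\pi$ is the identity), hence basepoint-preserving. Continuity into $\mathbb{R}^N$ is clear from continuity of $\pi$; the crux is $\operatorname{Im}(H)\subset\sh(\Ri_{\beta}(M))$. For $x\in\sh(\Ri_{\gamma}(M))$ write $x\in\conv([v_0,\dots,v_k])$ with $\diam_{d_M}(\{v_i\})<\gamma$. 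Taking $\gamma<\delta$, condition (M2) gives $\|v_i-v_j\|\le d_M(v_i,v_j)<\gamma$, so $\|x-v_i\|<\gamma$ and $x\in N_{\gamma}(M)$; condition (M3) then yields $\|\pi(x)-x\|<\varepsilon_{\gamma}$, whence $\|\pi(x)-v_i\|<\gamma+\varepsilon_{\gamma}$. A second use of (M2), valid once $\gamma+\varepsilon_{\gamma}<\delta$, bounds $d_M(\pi(x),v_i)\le\xi(\gamma+\varepsilon_{\gamma})$. Since $\varepsilon_{\gamma}\to0$ as $\gamma\to0$, for $\gamma$ small we obtain $\diam_{d_M}(\{v_0,\dots,v_k,\pi(x)\})<\beta$, so these points span a simplex of $\Ri_{\beta}(M)$ and the whole segment $[x,\pi(x)]$ lies in $\conv(\{v_0,\dots,v_k,\pi(x)\})\subset\sh(\Ri_{\beta}(M))$.

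The main obstacle is precisely this containment $\operatorname{Im}(H)\subset\sh(\Ri_{\beta}(M))$: it is what forces the passage to a strictly smaller scale $\gamma$ and what makes the regularity conditions (M2)--(M3) indispensable, since they convert the Euclidean proximity of $x$, $\pi(x)$, and the vertices $v_i$ into the intrinsic diameter bound needed to enlarge the spanning simplex. I would take care to confirm that the required smallness of $\gamma$ can be achieved for each fixed $\beta<\beta_0$, so that the inverse-system identity $\iota_{\beta,\gamma}\circ\iota_{\gamma,\infty}=\iota_{\beta,\infty}$ applies. Note that condition (M1) is not needed here, as the homotopy is built directly; it should, however, become essential in the noisy-sample analogues where no exact retraction is available.
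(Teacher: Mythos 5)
Your proposal is correct and follows essentially the same route as the paper's own proof: the identity $\pi_{\beta}\circ\jmath_{\beta}=\operatorname{id}$, the straight-line homotopy between $\iota_{\beta,\gamma}$ and $\jmath_{\beta}\circ\pi_{\gamma}$ justified via (M2)--(M3) by enlarging the spanning simplex to include $\pi(x)$ at a scale $\xi(\gamma+\varepsilon_{\gamma})<\beta$, and the componentwise (universal-property) verification that $\varprojlim_{\beta}\jmath_{\beta}$ is a two-sided inverse. Your observation that (M1) is not needed here also matches the paper, which invokes (M1) only in the later sample-based lemmas.
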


\begin{proof}
Let $\delta,\xi,\eps_\gamma$ be the parameters as defined in~\ref{assumptions}~(M2)--(M3). First, we prove the following statement:
for each pair of positive numbers $\beta, \gamma$ such that $\gamma < \beta <\beta_{0}$ and $\xi(\gamma+\varepsilon_{\gamma}) < \delta$, we have
\begin{equation}\label{systemiso}
\pi_{\beta}\circ \jmath_{\beta}= \operatorname{id}_{M},\text{ and }
\jmath_{\xi(\gamma+\varepsilon_\gamma)} \circ \pi_{\gamma} \simeq 
\iota_{\xi(\gamma+\varepsilon_\gamma),\gamma}.
\end{equation}

The first equality follows straightforwardly.  
For the proof of the second homotopy relation, take a point $x\in \sh(\Ri_{\gamma}(M))$ and we find finitely many points $p_{1},\ldots,p_{k}$ of $M$ such that
\[
x \in \conv(\{p_{1},\ldots,p_{k}\})~~\mbox{and}~
\diam_{M}(\{p_{1},\ldots,p_{k}\}) < \gamma.
\]
We see
\[
\begin{array}{ll}
\Vert p_{i}-p_{j}\Vert \leq d_{M}(p_{i},p_{j}) < \gamma,\\
\diam_{\mathbb{R}^N}\conv(\{p_{1},\ldots,p_{k}\}) = \max_{i,j} \Vert p_{i}-p_{j}\Vert <\gamma.
\end{array}
\]
Observe that $x \in N_{\gamma}(M)$.  By (M3), we have
\[
\Vert \pi_{\gamma}(x)-x \Vert < \varepsilon_\gamma
\]
and hence
\[
\Vert \pi_{\gamma}(x)-p_{i} \Vert \leq \Vert \pi_{\gamma}(x) - x\Vert + \Vert x- p_{i} \Vert < \varepsilon_{\gamma}+ \gamma
\]
for each $i=1,\ldots,k$.  The last term of the above is less than $\delta$ by the choice of $\gamma$. It follows from~\ref{assumptions}~(M2)~that
$d_{M}(\pi_{\gamma}(x),p_{i}) \leq \xi \Vert \pi_{\gamma}(x)-p_{i} \Vert < \xi(\gamma + \varepsilon_{\gamma}) $. 
This implies
\[
\diam_{M}(\{ \pi_{\gamma}(x),p_{1},\ldots,p_{k} \}) < \xi(\gamma+\varepsilon_{\gamma})
\]
and the points $\{ \pi_{\gamma}(x),p_{1},\ldots,p_{k} \}$ span a simplex of 
$\Ri_{\xi(\gamma+\varepsilon_{\gamma})}(M)$. We define a map
$H\colon \sh(\Ri_{\gamma}(M))\times [0,1] \to \sh(\Ri_{ \xi(\gamma+\varepsilon_{\gamma})}(M))$ by
\[
H(x,t) = tx+(1-t)\pi_{\gamma}(x) = t~ \iota_{ \xi(\gamma+\varepsilon_{\gamma}),\gamma}(x) + (1-t)~\jmath_{ \xi(\gamma+\varepsilon_{\gamma})}(\pi_{\gamma}(x)),~~(x,t) \in 
\sh(\Ri_{\gamma}(M))\times [0,1].
\]
By the above, we see that $H(x,t)$ is indeed a point of $\sh(\Ri_{ \xi(\gamma+\varepsilon_{\gamma})}(M))$ and $H$ is a well-defined homotopy between $\iota_{\ \xi(\gamma+\varepsilon_{\gamma}),\gamma}$ and $\jmath_{ \xi(\gamma+\varepsilon_{\gamma})} \circ \pi_{\gamma}$.
This proves (\ref{systemiso}).

\bigskip

For each $\beta \in (0,\beta_{0})$ we take $\gamma < \beta$ such that $\xi(\gamma+\varepsilon_{\gamma}) < \beta$.
Now we pass (\ref{systemiso}) to the homotopy groups to obtain the equalities
\begin{equation}\label{eq:systemeq}
\begin{array}{ll}
\pi_{\beta}\circ \jmath_{\beta} = \operatorname{id}_{\pi_{m}(M)}, \\
\jmath_{\beta} \circ \pi_{\gamma} 
= \iota_{\beta,\gamma}\colon \pi_{m}(\sh(\Ri_{\gamma}(M))) \to
\pi_{m}(\sh(\Ri_{\beta}(M))).
\end{array}
\end{equation}
From the above, we conclude
\[
\pi_{\infty}\circ \left(\varprojlim_{\beta}j_{\beta}\right) = \operatorname{id}_{\pi_{m}(M)},~~
\left(\varprojlim_{\beta}\jmath_{\beta}\right)
\circ \pi_{\infty} = \operatorname{id}_{\varprojlim_{\beta}\pi_m(\sh(\Ri_\beta(M)))}
\]
as follows.

For each $\omega \in \pi_{m}(M)$, we have, from~\eqref{eq:pi_infty} and the first equality of~\eqref{eq:systemeq}, that
\begin{eqnarray*}
\pi_{\infty}\circ \left(\varprojlim_{\beta}\jmath_{\beta}\right)(\omega) &=& (\pi_{\beta}\circ \iota_{\beta,\infty})\circ\big( (\jmath_{\alpha}(\omega))_{\alpha < \beta_{0}}\big )\\
&=& \pi_{\beta}(\jmath_{\beta}(\omega)) = \omega.
\end{eqnarray*}
Next, let us take $\boldsymbol{\omega} = (\omega)_{\alpha} \in \varprojlim_{\beta} \pi_{m}(\sh(\Ri_{\beta}(M)))$. For an arbitrary $\beta < \beta_0$, choose $\gamma$ so that $\xi(\gamma+\varepsilon_{\gamma}) < \beta$. We see, from the second equality of~\eqref{eq:systemeq}, that
\begin{eqnarray*}
\jmath_{\beta} (\pi_{\beta}(\omega_{\beta})) &=& \jmath_{\beta}(\pi_{\beta}(\iota_{\beta,\gamma}(\omega_{\gamma}))) \\
&=& \jmath_{\beta}\pi_{\gamma}(\omega_{\gamma}) = \iota_{\beta,\gamma}(\omega_{\gamma}) = \omega_\beta.
\end{eqnarray*}
It follows from the above and the first equality of~\eqref{eq:systemeq} that
\begin{eqnarray*}
\left(\varprojlim_{\beta}\jmath_{\beta}\right)
\circ \pi_{\infty}(\boldsymbol{\omega}) &=&  \left(\varprojlim_{\beta}\jmath_{\beta}\right) (\pi_{\beta}(\iota_{\beta,\infty}(\boldsymbol{\omega}))) \\
&=& \varprojlim_{\beta}\jmath_{\beta} (\pi_{\beta}(\omega_{\beta})) \\ &=& (\jmath_{\beta}\pi_{\beta}(\omega_{\beta})) = (\omega_{\beta}) = \boldsymbol{\omega}.
\end{eqnarray*}
This completes the proof.
\end{proof}

\begin{remark}
In terms of shape theory developed in \cite{MardesicSegal}, the above proof shows that the inverse system $\{ \sh(\Ri_{\beta}(M)), i_{\beta,\gamma} \mid o< \gamma < \beta < \beta_{0} \}$
is isomorphic to $\{ M \}$ in the category pro-HTOP.
\end{remark}


\begin{remark}
Proposition~\ref{prop:SbetaM} holds for more general classes of simplicial complexes. 
All we need to obtain the conclusion is the condition that corresponds to \eqref{eq:systemeq}. Hence if 
a system $\{S_\beta(M)\mid \beta \in \Lambda\}$ of simplicial complexes $S_\beta(M)$ indexed by a directed set $\Lambda$ such that 
\begin{itemize}
    \item [(i)] for each $\beta\in \Lambda$, we have an inclusion $\jmath_\beta\colon M\to S_\beta(M)$ of $M$ into $S_\beta(M)$, and
    \item[(ii)] for $\beta,\gamma \in \Lambda$ with $\gamma\geq \beta$, we have the inclusion
    $\iota_{\beta,\gamma}\colon S_\gamma(M) \to S_\beta(M)$, and
    \item[(iii)] for each $\beta$, there exist a $\gamma > \beta$ and a map $\pi_\gamma\colon S_\gamma(M) \to M$ such that $\pi_\gamma \circ \jmath_\gamma \simeq id$, $\jmath_\beta\circ \pi_\gamma \simeq id$,  
\end{itemize}
then we have an isomorphism $\pi_{m}(M) \cong \varprojlim_{\beta}\pi_{\beta}(M)$. 
\end{remark}


\subsection{Latchev-Type Limit Theorem for Shadow}\label{sec:Latschev}
We proceed to the proof of Theorem~\ref{thm:limitpi}.  For a finite subset $S \in \mathbb S$ which is $\beta/2$-dense in $M$, the inclusion $\jmath_{\beta}\colon M\to \sh(\Ri_{\beta}(M))$ in the proof of Proposition~\ref{prop:SbetaM} is replaced by a map $f_{\beta}^{S}\colon M\to \sh(\Ri_{\beta}(S))$ defined as follows.

For a point $s\in S$, let $D_{S}(s)$ be a closed subset of $M$ defined by 
\[
D_{S}(s) = \left\{p\in M\mid d_M(p,s)
=\min_{t\in S}d_{M}(t,p)\right\}.
\]
We take a continuous function $\lambda_{s}^{S}\colon M\to [0,1]$ such that
\begin{equation}\label{lambda}
\lambda_{s}^{S}(s) = 1,~~~ \lambda_{s}^{S}\lvert_{M\setminus D_{S}(s)} \equiv 0.
\end{equation}
Also, let 
\[
\Lambda_{S}(p) = \sum_{s\in B_{\beta/2}(p)\cap S}\lambda_{s}^{S}(p),
\]
and observe that $\Lambda_{S}(p) > 0$ for each $p\in M$ due to the $\beta /2$-denseness of $S$ in $M$.
For $\beta>0$, we define a map $f_{\beta}^{S}\colon M\to \sh(\Ri_{\beta}(S))$ as follows.
\begin{equation}\label{fbS}
f_{\beta}^{S}(p) = \frac{1}{\Lambda_{S}(p)} \sum_{x\in B_{\beta/2}(p)\cap S} \lambda_{x}^{S}(p)\cdot x.
\end{equation}

We see that $f_{\beta}^{S}(p) \in \conv(B_{\beta/2}(p)\cap S)$ and hence $f_{\beta}^{S}$ is indeed a map to $\sh(\Ri_{\beta}(S))$.

We first prove two technical lemmas.
\begin{lemma}\label{inclusions}
For $S, S_{1}, S_{2} \in \mathbb S$ and $0< \beta, \beta_{1} < \beta_{2}$ with
$S_{1}\subset S_{2}$ and $\beta_{2} < \beta_{1}$, 
let $\iota_{\beta}^{S_{1},S_{2}}\colon \sh(\Ri_{\beta}(S_{1})) \to \sh(\Ri_{\beta}(S_{2}))$ and
$\iota_{\beta_{1},\beta_{2}}^{S}\colon \sh(\Ri_{\beta_2}(S)) \to \sh(\Ri_{\beta_1}(S))$ be the inclusions. 
We have the following:
\begin{enumerate}
\item $\iota_{\beta}^{S_{1},S_{2}} \circ f_{\beta}^{S_{1}} \simeq f_{\beta}^{S_{2}}.$
\item $\iota_{\beta_{1},\beta_{2}}^{S} \circ f_{\beta_2}^{S} \simeq f_{\beta_1}^{S}.$
\end{enumerate}
\end{lemma}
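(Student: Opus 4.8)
The plan is to prove both homotopy relations by a single device: the \emph{straight-line homotopy} in $\R^N$, exploiting that the shadow is assembled from Euclidean convex hulls. The whole argument rests on one observation—in each case the two maps being compared send a point $p\in M$ into a \emph{common} convex cell of the target shadow, so that the segment joining their values never leaves that cell.

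For part (1), I would define $H\colon M\times[0,1]\to\sh(\Ri_\beta(S_2))$ by
\[
H(p,t)=(1-t)\,\iota_\beta^{S_1,S_2}\bigl(f_\beta^{S_1}(p)\bigr)+t\,f_\beta^{S_2}(p).
\]
The key step is well-definedness. By construction $f_\beta^{S_1}(p)\in\conv\bigl(B_{\beta/2}(p)\cap S_1\bigr)$ and $f_\beta^{S_2}(p)\in\conv\bigl(B_{\beta/2}(p)\cap S_2\bigr)$; since $S_1\subset S_2$, both values lie in the single Euclidean convex hull $\conv\bigl(B_{\beta/2}(p)\cap S_2\bigr)$. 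The triangle inequality in $d_M$ gives $d_M(x,y)\le d_M(x,p)+d_M(p,y)<\beta$ for any $x,y\in B_{\beta/2}(p)\cap S_2$, so $\diam_M\bigl(B_{\beta/2}(p)\cap S_2\bigr)<\beta$; by the strict inequality in the definition of $\Ri_\beta$ these points span a simplex of $\Ri_\beta(S_2)$, whence $\conv(B_{\beta/2}(p)\cap S_2)\subset\sh(\Ri_\beta(S_2))$. As this set is convex, the entire segment $H(p,\cdot)$ stays inside it, and $H$ lands in the target shadow. Continuity of $H$ reduces to that of $f_\beta^{S_1}$ and $f_\beta^{S_2}$, and the endpoint values give exactly $\iota_\beta^{S_1,S_2}\circ f_\beta^{S_1}$ and $f_\beta^{S_2}$.

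Part (2) is the scale-change analogue and runs along identical lines. With $\beta_2<\beta_1$ one has $B_{\beta_2/2}(p)\subset B_{\beta_1/2}(p)$, so $f_{\beta_2}^S(p)$ and $f_{\beta_1}^S(p)$ both lie in $\conv\bigl(B_{\beta_1/2}(p)\cap S\bigr)$, a set of $d_M$-diameter $<\beta_1$ and hence a simplex of $\Ri_{\beta_1}(S)$ sitting inside $\sh(\Ri_{\beta_1}(S))$. The linear homotopy $H(p,t)=(1-t)\,\iota_{\beta_1,\beta_2}^{S}\bigl(f_{\beta_2}^S(p)\bigr)+t\,f_{\beta_1}^S(p)$ then stays in this convex cell, proving $\iota_{\beta_1,\beta_2}^S\circ f_{\beta_2}^S\simeq f_{\beta_1}^S$.

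The step I expect to be delicate is not the formula for $H$ but the continuity of the underlying maps $f_\beta^S$ across the boundaries of the Voronoi-type regions $D_S(s)$: as $p$ crosses $\partial D_S(s)$ the index set $B_{\beta/2}(p)\cap S$ and the active summands change, and one must check that the weights $\lambda_s^S$ vanish on these boundaries (forced by $\lambda_s^S|_{M\setminus D_S(s)}\equiv0$ together with continuity) so that $f_\beta^S$, and therefore $H$, is genuinely continuous. Once this bookkeeping is in place, the convexity observation makes both homotopies immediate, and the same argument simultaneously handles homology in place of homotopy.
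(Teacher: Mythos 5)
Your proof is correct and matches the paper's own argument: both parts are established by the same straight-line homotopy, justified by observing that $f_\beta^{S_1}(p)$ and $f_\beta^{S_2}(p)$ (respectively $f_{\beta_2}^S(p)$ and $f_{\beta_1}^S(p)$) lie in the common convex cell $\conv\bigl(B_{\beta/2}(p)\cap S_2\bigr)$ (respectively $\conv\bigl(B_{\beta_1/2}(p)\cap S\bigr)$) inside the target shadow. Your explicit diameter check and the continuity remark about the weights $\lambda_s^S$ are details the paper leaves implicit, but the approach is the same.
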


\begin{proof}
(1).  For $\beta>0$ and $S_{1}, S_{2} \in \mathbb S$ with $S_{1} \subset S_{2}$, take a point $p$ of $M$.  By the definition of $f_{\beta}^{S_1}$ and $f_{\beta}^{S_2}$ we have
\[
 f_{\beta}^{S_1}(p) \in \conv(S_{1}\cap B_{\beta/2}(p)),~\mbox{and}~~
 f_{\beta}^{S_2}(p) \in \conv(S_{2}\cap B_{\beta/2}(p)).
\]
By the assumption, $\conv(S_{2}\cap B_{\beta/2}(p)) \supset
\conv(S_{1}\cap B_{\beta/2}(p))$, thus we see for each $t\in [0,1]$,
\[
(1-t) ~\iota_{\beta}^{S_{1},S_{2}} (f_{\beta}^{S_{1}}(p)) + t~ f_{\beta}^{S_{2}}(p) 
\in \conv(S_{2}\cap B_{\beta/2}(p)) \subset
\sh(\Ri_\beta(S_{2})).\]
Thus the map $M\times [0,1] \to \sh(\Ri_\beta(S_{2}))$ defined by 
\[
t\mapsto (1-t) \iota_{\beta}^{S_{1},S_{2}} \circ f_{\beta}^{S_{1}} + t f_{\beta}^{S_{2}} 
\]
gives the desired homotopy between $\iota_{\beta}^{S_{1},S_{2}} \circ f_{\beta}^{S_{1}}$ and $f_{\beta}^{S_{2}}$.
This proves (1).  The proof of (2) is similar to the above.

\end{proof}

\begin{lemma}\label{pifbs}
Let $\beta$ be a positive number satisfying
\[
2\beta+\epsilon_{\beta} < \delta,~~~\xi(2\beta+\varepsilon_{\beta}) < \rho(M), 
\]
and define $\nu_\beta$ by
\[
\nu_{\beta} = ((1/2)+\xi)\beta+ \xi\varepsilon_{\beta}.
\]
Then, for each $S\in \mathbb S$ which is $\beta/2$-dense in $M$, we have the following.:
\begin{enumerate}
\item $\pi_{\beta} \circ f_{\beta}^{S} \simeq \operatorname{id}_{M}$.
\item 
$\displaystyle
\iota_{\nu_{\beta},\beta}^S\circ f_{\beta}^{S} \circ \pi_{\beta} \simeq \iota_{\nu_{\beta},\beta}^{S}\colon \sh(\Ri_{\beta}(S)) \to \sh(\Ri_{\nu_\beta}(S)).
$
\end{enumerate}
\end{lemma}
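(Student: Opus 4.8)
The plan is to realize both homotopies as \emph{straight-line} homotopies in $\mathbb{R}^N$, using (M1)--(M3) to keep the relevant distances under control: (1) will follow from the homotopy axiom (M1) once $\pi_\beta\circ f_\beta^S$ is shown to be $\rho(M)$-close to the identity, and (2) will follow by checking that the obvious linear homotopy never leaves the enlarged shadow $\sh(\Ri_{\nu_\beta}(S))$. Throughout I will use two elementary facts: for $a,b\in M$ one has $\Vert a-b\Vert\le d_{M}(a,b)$ (a Euclidean chord is no longer than the geodesic realizing $d_M$), and any point of a Euclidean convex hull $\conv(\{a_0,\dots,a_n\})$ lies within distance $\max_{i,j}\Vert a_i-a_j\Vert\le\diam_{\mathbb{R}^N}(\conv(\{a_i\}))$ of every vertex $a_i$.

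For (1), I would fix $p\in M$ and set $q=f_\beta^S(p)$. By the construction of $f_\beta^S$, the point $q$ is a convex combination of sample points $y\in S$ with $d_{M}(y,p)<\beta/2$, hence $\Vert y-p\Vert<\beta/2$ and therefore $\Vert q-p\Vert<\beta/2$; in particular $q\in N_{\beta}(M)$. Applying (M3) gives $\Vert\pi_\beta(q)-q\Vert<\varepsilon_\beta$, so $\Vert\pi_\beta(q)-p\Vert<\varepsilon_\beta+\beta/2<\delta$ by the hypothesis $2\beta+\varepsilon_\beta<\delta$. Since $\pi_\beta(q),p\in M$, (M2) then yields $d_{M}(\pi_\beta(q),p)\le\xi\Vert\pi_\beta(q)-p\Vert<\xi(\varepsilon_\beta+\beta/2)\le\xi(2\beta+\varepsilon_\beta)<\rho(M)$. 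As this bound is uniform in $p$, the maps $\pi_\beta\circ f_\beta^S$ and $\operatorname{id}_M$ are $\rho(M)$-close, and (M1) gives $\pi_\beta\circ f_\beta^S\simeq\operatorname{id}_M$.

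For (2), I would compare $\iota_{\nu_\beta,\beta}^S$ with $\iota_{\nu_\beta,\beta}^S\circ f_\beta^S\circ\pi_\beta$ via the linear homotopy $H(z,t)=(1-t)\,z+t\,f_\beta^S(\pi_\beta(z))$ and verify that its image lies in $\sh(\Ri_{\nu_\beta}(S))$. Given $z\in\conv(\sigma)$ for a simplex $\sigma=[x_1,\dots,x_k]\in\Ri_\beta(S)$, set $p=\pi_\beta(z)$, so that $f_\beta^S(p)\in\conv(\{y_1,\dots,y_l\})$ with each $y_j\in B_{\beta/2}(p)\cap S$. It then suffices to show that $\{x_1,\dots,x_k,y_1,\dots,y_l\}$ spans a simplex of $\Ri_{\nu_\beta}(S)$: in that case $z$, $f_\beta^S(p)$, and the entire segment joining them lie in the one convex cell $\conv(\{x_i\}\cup\{y_j\})\subset\sh(\Ri_{\nu_\beta}(S))$, and $H$ becomes a well-defined continuous homotopy from $\iota_{\nu_\beta,\beta}^S$ (at $t=0$) to $\iota_{\nu_\beta,\beta}^S\circ f_\beta^S\circ\pi_\beta$ (at $t=1$). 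The diameter of the combined vertex set splits into three families: $d_{M}(x_i,x_j)<\beta$ and $d_{M}(y_i,y_j)<\beta$ are immediate; for the cross terms, $\Vert z-x_i\Vert<\beta$ together with $z\in N_\beta(M)$ gives $\Vert p-z\Vert<\varepsilon_\beta$ by (M3), whence $\Vert p-x_i\Vert<\beta+\varepsilon_\beta<\delta$ and $d_{M}(p,x_i)\le\xi\Vert p-x_i\Vert<\xi(\beta+\varepsilon_\beta)$ by (M2), and combining with $d_{M}(p,y_j)<\beta/2$ yields
\[
d_{M}(x_i,y_j)\le d_{M}(x_i,p)+d_{M}(p,y_j)<\xi(\beta+\varepsilon_\beta)+\tfrac{\beta}{2}=\nu_\beta .
\]
Thus $\diam_{M}(\{x_i\}\cup\{y_j\})<\nu_\beta$, as needed.

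The main obstacle is precisely this cross-term bound in (2): it is what forces the exact value of $\nu_\beta$, and it requires chaining three inequalities in the correct order---the chord/geodesic comparison, the retraction estimate (M3) applied to the \emph{shadow} point $z$ rather than to a point of $M$, and the bi-Lipschitz comparison (M2)---each of which is valid only when its input stays below $\delta$. Confirming these threshold conditions from $2\beta+\varepsilon_\beta<\delta$ and $\xi(2\beta+\varepsilon_\beta)<\rho(M)$ is the delicate bookkeeping. Everything else is routine: continuity of $H$ follows since $\pi_\beta$ and $f_\beta^S$ are continuous and $H$ is affine in $z$, while for finite $S$ the shadows are compact Euclidean polyhedra, so the subspace and weak topologies agree and landing in $\mathbb{R}^N$ suffices.
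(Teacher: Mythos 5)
Your proof is correct and follows essentially the same route as the paper's: part (1) via the estimate $\Vert p-\pi_\beta(f_\beta^S(p))\Vert < \beta/2+\varepsilon_\beta < \delta$ upgraded through (M2) to $\rho(M)$-closeness and then (M1), and part (2) via the identical straight-line homotopy $(z,t)\mapsto (1-t)z+t\,f_\beta^S(\pi_\beta(z))$, justified by the same cross-term bound $d_M(x_i,y_j)<\xi(\beta+\varepsilon_\beta)+\beta/2=\nu_\beta$ showing the combined vertex set spans a simplex of $\Ri_{\nu_\beta}(S)$. The only differences are cosmetic: you spell out all three families of pairwise distances and take the simplex vertices in $S$ (which is what the inclusion into $\sh(\Ri_{\nu_\beta}(S))$ actually requires), where the paper loosely says ``points of $M$''.
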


\begin{proof} 
Take $\beta$ and $S\in \mathbb S$ as in the hypothesis.

(1)  For a point $p$ of $M$, we have $f_{\beta}^{S}(p) \in \conv(S \cap B_{\beta/2}(p))$. Also we see
\[
\diam_{\mathbb{R}^{N}}(S \cap B_{\beta/2}(p)) \leq \diam_{M}(S \cap B_{\beta/2}(p)) < \beta.
\]
We observe from the above that $f_{\beta}^{S}(p) \in N_{\beta}(M)$. Hence, by~\ref{assumptions}~(M3),  we have
\begin{equation}\label{norm1}
\Vert \pi_{\beta}(f_{\beta}^{S}(p))-f_{\beta}^{S}(p)\Vert < \varepsilon_\beta.
\end{equation}
Also for each point $x\in B_{\beta/2}(p)\cap S$, we have 
\[
\Vert x-p\Vert \leq d_{M}(x,p) < \beta/2.
\]
Since $f_{\beta}^{S}(p) \in \conv(S\cap B_{\beta/2}(p))$, we see from the above that 
\[
\Vert f_{\beta}^{S}(p) - p \Vert < \beta/2.
\]		
These two imply
\begin{eqnarray*}
\Vert p-\pi_{\beta}(f_{\beta}^{S}(p))\Vert &\leq& \Vert p-f_{\beta}^{S}(p) \Vert + \Vert f_{\beta}^{S}(p)-\pi_{\beta}(f_{\beta}^{S}(p))\Vert \\
&<& (\beta/2) +\varepsilon_{\beta}.
\end{eqnarray*}
The last term is less than $\delta$ by the choice of $\beta$. Hence, we obtain by~\ref{assumptions}~(M2) that
\[
d_{M}(p,\pi_{\beta}(f_{\beta}^{S}(p))) < \xi ((\beta/2) + \varepsilon_\beta) < \rho(M).
\]
Since $p$ is an arbitrary point of $M$, we obtain that 
$\pi_{\beta}\circ f_{\beta}^{S}$ is $\rho(M)$-close to $\operatorname{id}_M$.
From~\ref{assumptions}~(M1) , we obtain $\pi_{\beta}\circ f_{\beta}^{S} \simeq \operatorname{id}_M$.

(2).  For a point $x\in \sh(\Ri_{\beta}(S))$, there are points $p_{1},\ldots,p_{k}$ of $M$ such that
\[
x\in \conv(\{p_{1},\ldots,p_{k}\}),~~~ \diam_{M}(\{p_{1},\ldots,p_{k}\}) < \beta.
\]
We observe that $x\in N_{\beta}(M)$ and $\diam_{\mathbb{R}^N}(\{p_{1},\ldots,p_{k}\}) < \beta$. 
For each $i=1,\ldots,k$, we have, from~\ref{assumptions}~(M3), 
\begin{eqnarray*}
\Vert \pi_{\beta}(x) -p_{i}\Vert &\leq& \Vert \pi_{\beta}(x)-x\Vert + \Vert x-p_{i}\Vert \\
&\leq&  \varepsilon_{\beta} +  \beta.
\end{eqnarray*}
The last term is less than $\delta$ and by~\ref{assumptions}~(M2), we see
\[
d_{M}(\pi_{\beta}(x),p_{i}) < \xi(\beta+\varepsilon_{\beta}),~~~i=1,\ldots,k.
\]
From the above, it follows that for each $y\in S\cap B_{\beta/2}(\pi_{\beta}(x))$ and for each $i=1,\ldots k$,
\begin{eqnarray*}
d_{M}(y,p_{i}) &\leq& d_{M}(y,\pi_{\beta}(x))+d_{M}(\pi_{\beta}(x),p_{i}) \\
&<& (\beta/2) + \xi(\beta+\varepsilon_{\beta}) = ((1/2)+\xi)\beta+ \xi\varepsilon_{\beta} = \nu_{\beta}, 
\end{eqnarray*}
which implies
\[
\diam_{M}(\{p_{1},\ldots,p_{k}\}\cup (S\cap B_{\beta/2}(p))) < \nu_{\beta},
\]
Hence for each $t\in [0,1]$, 
\[
(1-t)x+ t f_{\beta}^{S}(\pi_{\beta}(x)) \in \conv(\{p_{1},\ldots,p_{k}\}\cup (S\cap B_{\beta/2}(\pi_{\beta}(x)))) \subset \sh(\Ri_{\nu_{\beta}}(S)).
\]
The map $\sh(\Ri_{\beta}(S))\times [0,1] \to \sh(\Ri_{\nu_\beta}(S))$ defined by $(x,t) \mapsto (1-t)x+ t f_{\beta}^{S}(\pi_{\beta}(x))$ gives a homotopy between the maps $\iota_{\nu_{\beta},\beta}^{S}$ and $\iota_{\nu_{\beta},\beta}^{S}\circ f_{\beta}^{S} \circ \pi_{\beta}$.  This proves (2).
  
\end{proof}

We finally conclude this section by proving Theorem~\ref{thm:limitpi}. 
\begin{proof}[Proof of Theorem~\ref{thm:limitpi}]
We apply Lemma \ref{pifbs} to obtain the commutative diagrams of homotopy groups:

\[
\xymatrix{
  \pi_{m}(M) \ar[r]^{f_{\beta}^{S}} \ar[rd]_{\operatorname{id}} & \pi_m(\sh(\Ri_\beta (S))) \ar[d]^{\pi_{\beta}^{S}}
\\
& \pi_{m}(M)
}
\]
and
\[
\xymatrix{
  \pi_{m}(\sh(\Ri_\beta (S)))  \ar[r]^{\pi_{\beta}^{S} } \ar[rd]_{\operatorname{id}} & \pi_{m}(M) \ar[rd]^{\iota_{\nu_{\beta},\beta}^{S}\circ f_{\beta}^{S}
}
\\
& \pi_{m}(\sh(\Ri_{\beta}(S))) \ar[r]_{\iota_{\nu_{\beta},\beta}^{S}} &\pi_{m}(\sh(\Ri_{\nu_\beta}(S)))
}
\]
for each sufficiently small $\beta < \beta_0$ satisfying the hypothesis of the lemma.
We now take the direct limit $\varinjlim_{S\in \mathbb S}$ to obtain the corresponding commutative diagrams:
\begin{equation}\label{explanation1}
\xymatrix{
  \pi_{m}(M) \ar[r]^{f_{\beta}^{\mathbb S}} \ar[rd]_{\operatorname{id}} & \sh(\Ri_\beta (\mathbb S)) \ar[d]^{\pi_{\beta}^{\mathbb S}}
\\
& \pi_{m}(M)
}
\end{equation}
and
\begin{equation}\label{explanation2}
\xymatrix{
  \pi_{m}(\sh(\Ri_\beta (\mathbb S))  \ar[r]^{\pi_{\beta}^{\mathbb S} } \ar[rd]_{\operatorname{id}} & \pi_{m}(M) \ar[rd]^{\iota_{\nu_{\beta},\beta}^{\mathbb S}\circ f_{\beta}^{\mathbb S}
}
\\
& \pi_{m}(\sh(\Ri_{\beta}(\mathbb S))) \ar[r]_{\iota_{\nu_{\beta},\beta}^{\mathbb S}} &\pi_{m}(\sh(\Ri_{\nu_\beta}(\mathbb S)))
}
\end{equation}
Then, we take the inverse limit $\varprojlim_{\beta}$ to obtain
\begin{equation}\label{explanation3}
\xymatrix{
  \pi_{m}(M) \ar[r]^{\varprojlim_{\beta} f_{\beta}^{\mathbb S}\quad\quad\quad} \ar[rd]_{\operatorname{id}} & \varprojlim_{\beta}\pi_{m}(\sh(\Ri_\beta (\mathbb{S}))) \ar[d]^{\pi_{\infty}^{\mathbb S}}
\\
& \pi_{m}(M)
}
\end{equation}
and
\begin{equation}\label{explanation4}
\xymatrix{
  \varprojlim_{\beta} \pi_{m}(\sh(\Ri_\beta (\mathbb{S})) ) \ar[r]^{\pi_{\infty}^{\mathbb S} } \ar[rd]_{\operatorname{id}} & \pi_{m}(M) \ar[d]^{\varprojlim_{\beta}f_{\beta}^{\mathbb S}
}
\\
& \varprojlim_{\beta} \pi_{m}(\sh(\Ri_{\beta}(\mathbb{S})) )
}
\end{equation}

Here we verify the commutativity of (\ref{explanation3}) and (\ref{explanation4}) as follows: 

For each $\omega \in \pi_{m}(M)$, we have from (\ref{explanation1})
\begin{eqnarray*}
\pi_{\infty}^{\mathbb S} (\varprojlim_{\beta}f_{\beta}^{\mathbb S}(\omega)) &=& 
(\pi_{\beta}^{\mathbb S}\circ \iota_{\beta,\infty}) \left (\varprojlim_{\beta}f_{\beta}^{\mathbb S}(\omega)\right) \\
&=& \pi_{\beta}^{\mathbb S}(f_{\beta}^{\mathbb S}(\omega)) = \omega.
\end{eqnarray*}
This verifies the commutativity in (\ref{explanation3}).
For (\ref{explanation4}), take an arbitrary $\beta < \beta_0$ and choose $\gamma<\beta$ so that 
$\nu_{\gamma} = \beta$. Applying the commutativity in (\ref{explanation2}) to $\gamma$, we see
\[
\iota_{\beta,\gamma}^{\mathbb S}\circ f_{\gamma}^{\mathbb S}\circ \pi_{\gamma}^{\mathbb S} = \iota_{\beta,\gamma}^{\mathbb S}.
\]
From the above, we obtain 
\begin{eqnarray*}
\iota_{\beta,\infty}^{\mathbb S}\circ \varprojlim_{\beta} f_{\beta}^{\mathbb S}\circ \pi_{\infty}^{\mathbb S} 
&=& \iota_{\beta,\gamma}^{\mathbb S}\circ (\iota_{\gamma,\infty}^{\mathbb S} 
\circ \varprojlim_{\beta} f_{\beta}^{\mathbb S} )\circ 
\pi_{\infty}^{\mathbb S}\\
&=& \iota_{\beta,\gamma}^{\mathbb S} \circ f_{\gamma}^{\mathbb S} 
\circ  \pi_{\infty}^{\mathbb S} \\
&=&  (\iota_{\beta,\gamma}^{\mathbb S} \circ f_{\gamma}^{\mathbb S} 
\circ \pi_{\gamma}^{\mathbb S}) \circ \iota_{\gamma,\infty}^{\mathbb S}
\\
&=& \iota_{\beta,\gamma}^{\mathbb S}\circ \iota_{\gamma,\infty}^{\mathbb S} =
\iota_{\beta,\infty}^{\mathbb S}.
\end{eqnarray*}
Since the above holds for arbitrary $\beta>0$, we see that (\ref{explanation4}) is commutative.

This shows that $\pi_{\infty}^{\mathbb S}$ is an isomorphism, whose inverse is given by $\varprojlim_{\beta}f_{\beta}^{\mathbb S}$.
This completes the proof.
\end{proof}


\subsection{Limit Theorem for Shadow Projection}
We make use of the above result to study the homotopy behavior of the shadow projection map $p_{\beta}^{S}\colon \Ri_\beta(S) \to \sh(\Ri_\beta(S))$ of the Vietoris-Rips complex $\Ri_\beta(S)$ of a sample $S\subset M$ at scale $\beta>0$.  
We first recall Hausmann's construction~\cite{hausmann1995vietoris}. 

Throughout we fix a total order on $M$ and let $\sigma = [p_{0},\ldots,p_{n}]$ be a simplex of $\Ri_\beta(M)$, i.e., $\{p_{0},\ldots, p_{n}\} \subset M$ and $\diam_{M}(\{p_{0},\ldots, p_{n}\}) < \beta$.
We assume that the vertices are enumerated as $p_{0} < \cdots < p_{n}$.

The Hausmann map $T\colon  \Ri_\beta (M) \to M$ is defined inductively on the dimension $n\geq0$. 
First set $T(p) = p$.  
Assume that $T$ is defined on the $(n-1)$-skeleton $\Ri_\beta (M)^{(n-1)}$ of $\Ri_\beta(M)$ and let $\sigma =[p_{0},\ldots, p_{n}]$ be an $n$-simplex. For a point $\displaystyle x = \sum_{i=0}^{n}\lambda_{i}p_{i}$ of $\Ri_\beta(M)$ (where $\lambda_{i}\geq 0, \sum_{i}\lambda_{i} =1$, $T(x)$ is defined by
\begin{equation}\label{hausmannmap}
T(x) =
\begin{cases}
p_{n},~~ \mbox{if}~ \lambda_{n} = 1,\\
T\left (\frac{1}{1-\lambda_n}\sum_{i=0}^{n-1}\lambda_{i}p_{i}\right),~~~\mbox{if}~\lambda_{n} < 1.
\end{cases}
\end{equation} 
When the metric $d_M$ satisfies the following conditions (\cite[p.179, Items b)-c)]{hausmann1995vietoris}), 
the diameter of the image $T(\sigma)$ of an arbitrary simplex $\sigma=[p_{0},\ldots,p_{n}]$ of $\Ri_\beta(M)$ is estimated in the next lemma.
\begin{itemize}
    \item [(i)] Let $p,q,r$ be points of $M$ such that $\max\{ d_{M}(p,q),d_{M}(q,r),d_{M}(r,p)\} < \rho(M)$ and let $s$ be a point on a geodesic joining $p$ and $q$. Then we have
    \[
    d_{M}(r,s) \leq \max\{d_{M}(r,p),d_{M}(r,q)\}
    \]
    \item[(ii)] If $c_{1},c_{2}$ are two geodesics such that $c_{1}(0) = c_{2}(0)$ and if $s_{1},s_{2} \in [0,\rho(M)]$, then we have
    \[
    d_{M}(c_{1}(ts_{1}),c_{2}(ts_{2})) \leq d_{M}(c_{1}(s_{1}),c_{2}(s_{2})).
    \]
\end{itemize}
\begin{lemma}\label{diamhausmann}
Assume that $M$ satisfies the conditions (i) and (ii) above and let $\beta$ be a positive number so that $2\beta <\rho(M)$. Then, we have the following:
\begin{itemize}
\item[(1)] For each point $q\in M$ with $d_{M}(q,p_{i}) < \beta,~i=0,\ldots,n$, and for each  point $r\in T(\sigma)$, we have $d_{M}(r,q) < \beta.$
\item[(2)] For each point $q\in T(\sigma)$, we have $d_{M}(q,p_{i}) < \beta,~i=0,\ldots,n$.
\end{itemize}
\end{lemma}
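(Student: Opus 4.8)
The plan is to prove statements (1) and (2) \emph{simultaneously} by induction on the dimension $n$ of $\sigma$. First I would record the structural feature of the Hausmann map that drives the argument: for $x=\sum_{i=0}^n\lambda_i p_i$ with $\lambda_n<1$, the point $T(x)$ lies on the geodesic segment joining $T(\bar x)$ to $p_n$, where $\bar x=\frac{1}{1-\lambda_n}\sum_{i=0}^{n-1}\lambda_i p_i$ is a point of the face $\tau=[p_0,\ldots,p_{n-1}]$; thus every point of $T(\sigma)$ is either $p_n$ (the case $\lambda_n=1$) or lies on a geodesic from some point of $T(\tau)$ to $p_n$. Since $\diam_M(\tau)\le\diam_M(\sigma)<\beta$, the face $\tau$ is again a simplex of $\Ri_\beta(M)$, so the inductive hypotheses apply to it. The base case $n=0$ is immediate, as $T(\sigma)=\{p_0\}$.

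The key observation, and the reason (1) and (2) cannot be separated, is that controlling $T(\tau)$ against the top vertex $p_n$---which is \emph{not} a vertex of $\tau$---requires statement (1) at the lower dimension. Concretely, assuming both (1) and (2) for all simplices of dimension $<n$: statement (2) for $\tau$ gives $d_M(T(\bar x),p_i)<\beta$ for $i\le n-1$, while statement (1) for $\tau$ applied with the external point $q=p_n$ (legitimate because $d_M(p_n,p_j)\le\diam_M(\sigma)<\beta$ for all $j\le n-1$) gives $d_M(T(\bar x),p_n)<\beta$. Hence the points $T(\bar x),p_0,\ldots,p_n$ have all pairwise distances $<\beta<\rho(M)$, and condition (i) is applicable along the geodesic $[T(\bar x),p_n]$.

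To finish (2) at dimension $n$ I would apply condition (i) to the geodesic $[T(\bar x),p_n]$ with external point $p_i$, obtaining $d_M(p_i,T(x))\le\max\{d_M(p_i,T(\bar x)),d_M(p_i,p_n)\}<\beta$ for each $i$ (the case $T(x)=p_n$ being trivial since $d_M(p_n,p_i)\le\diam_M(\sigma)<\beta$). For (1) at dimension $n$, given $q\in M$ with $d_M(q,p_i)<\beta$ for all $i$, I would first invoke (1) for $\tau$ (whose hypothesis holds for $i\le n-1$) to get $d_M(T(\bar x),q)<\beta$, and then apply condition (i) to $[T(\bar x),p_n]$ with external point $q$, yielding $d_M(q,T(x))\le\max\{d_M(q,T(\bar x)),d_M(q,p_n)\}<\beta$ (again the subcase $T(x)=p_n$ follows at once from the hypothesis $d_M(q,p_n)<\beta$). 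This closes the simultaneous induction.

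The main obstacle is precisely the interdependence just described: neither estimate is self-contained, because the coning step introduces the vertex $p_n$ lying outside the face $\tau$, and it is only statement (1) that controls distances from points of $T(\tau)$ to points that are not vertices of $\tau$. Getting the induction bookkeeping right---applying (1) for $\tau$ once with external point $p_n$ and once with external point $q$---is the delicate part; everything else reduces to routine applications of condition (i), for which one only needs each relevant triple of points to have pairwise distances below $\rho(M)$, guaranteed by $2\beta<\rho(M)$ (indeed $\beta<\rho(M)$ suffices in each step). I expect condition (ii) to be unnecessary for these diameter bounds; it should enter elsewhere, e.g.\ in the continuity and homotopy properties of $T$.
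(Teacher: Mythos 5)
Your proposal is correct and follows essentially the same route as the paper: the same coning structure of the Hausmann map, the same application of condition~(i) along the geodesic $[T(\bar x),p_n]$, and the same key move of invoking statement~(1) at the lower dimension with the external point $p_n$ (the paper likewise uses only condition~(i), so your remarks that condition~(ii) is not needed and that $\beta<\rho(M)$ suffices are accurate). The only organizational difference is that the paper runs two \emph{separate} inductions---(1) is proved first by a self-contained induction (its inductive step uses only (1) for the face, with the same external point $q$), and (2) is then proved using (1)---so your claim that the two statements ``cannot be separated'' overstates the interdependence, though your simultaneous induction is of course equally valid.
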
 
\begin{proof}
Both proofs consider induction on $n$.

(1) The proof for the case  $n=1$ is straightforward.  Assume that the statement holds for $(n-1)$ and take an $n$-simplex $[p_{0}, \ldots,p_{n}]$ and points $q, r$ as in the hypothesis. We may assume that $r\neq p_{n}$. By the construction (\ref{hausmannmap}) there exists a point $r' \in T([p_{0}, \ldots,p_{n-1}])$ such that $r$ lies on the unique geodesic $c_{p_{n}r'}$ between $p_n$ and $r'$. By the inductive hypothesis, we have $d_{M}(q,r') < \beta$. 
Applying Condition (i) above, we have 
\[
d_{M}(r,q) \leq \max\{ d_{M}(q,p_{n}), d_{M}(q,r') \} < \beta.
\]

(2) The case $n=1$ is a direct consequence of  Condition (i). Assuming the conclusion holds for $(n-1)$, we take an $n$-simplex  $\sigma = [p_{0}, \ldots,p_{n}]$. For each point $q\in T(\sigma) \setminus \{p_{n}\}$ there exists a point $q' \in T([p_{0}, \ldots,p_{n-1}])$ such that $q$ is on the unique geodesic $c_{p_{n}q'}$ joining $p_n$ and $q'$. It follows from (1) that $d_{M}(q',p_{n}) < \beta$.  Also by the inductive hypothesis, $d_{M}(p_{i},q') < \beta$ for each $i=0,\ldots,n-1$.
Hence we obtain for $i=0,\ldots, (n-1)$,
\[
d_{M}(q, p_{i}) \leq \max\{ d_{M}(p_{i},p_{n}), d_{M}(p_{i},q') \} < \beta.
\]
This proves (2).
\end{proof}

The above observation motivates the following assumption:

\bigskip
\noindent
{\bf Assumption (H)} .~~
There exists $\beta_{0}>0$ and a homotopy equivalence
\[
T\colon \Ri_{\beta_{0}}(M) \to M
\]
such that for each $\beta \in (0,\beta_{0})$ and for each $\sigma = [p_{0},\ldots,p_{n}] \in \Ri_{\beta}(M)$ we have
\[
T(\sigma) \subset \bigcap_{i=0}^{n}B_{d_M}(p_{i},\beta).
\]
For $\beta < \beta_0$, let $T_{\beta}\coloneq  T\lvert_{\Ri_\beta(M)}$ and it is called a \emph{Hausmann map}.
We start by comparing the Vietoris-Rips shadow projection map
$p_{\beta}^{S}\colon \Ri_\beta(S) \to \sh(\Ri_\beta(S))$ with a Hausmann map $T_{\beta}\colon \Ri_\beta(M) \to M$.

\begin{proposition}\label{projectionhausmann}
Assume that $\beta>0$ satisfies 
\[
\beta+\eps_\beta<\delta\text{ and }\beta+\xi(\beta+\varepsilon_{\beta}) < \rho(M).
\]
For a closed subset $F$ of $M$, let $\iota_{\beta}^{F}\colon \Ri_\beta(F) \to \Ri_\beta(M)$ and $\pi_{\beta}^{F}\colon \sh(\Ri_{\beta}(F)) \to M$ be the inclusions and the projection respectively. 
Then, we have
\[
\pi_{\beta}^{F} \circ p_{\beta}^{F} \simeq T_{\beta} \circ \iota_{\beta}^{F},
\]
i.e., the following diagram is commutative up to homotopy:
\begin{equation}\label{projectiondiagram-1}
\xymatrix{
  \Ri_{\beta}(F) \ar[r]^{p_{\beta}^{F}} \ar[d]_{\iota_{\beta}^F} & \sh(\Ri_\beta (F)) \ar[d]^{\pi_{\beta}^{F}}
\\
\Ri_\beta(M) \ar[r]_{T_\beta} & M
}
\end{equation}
\end{proposition}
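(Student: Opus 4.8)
The plan is to reduce the entire statement to a single uniform distance estimate: I would show that the two maps $\pi_\beta^F\circ p_\beta^F$ and $T_\beta\circ\iota_\beta^F$, both going from $\Ri_\beta(F)$ to $M$, are \emph{pointwise} $\rho(M)$-close with respect to $d_M$, and then invoke~\ref{assumptions}~(M1) to conclude that they are homotopic. In this way no explicit homotopy between the two composites has to be written down; the whole content is a chain of triangle-inequality estimates combined with the calibration of the hypotheses.

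First I would fix a point $x$ of $\Ri_\beta(F)$ and let $\sigma=[p_0,\ldots,p_n]$ be its carrier, so that $p_\beta^F(x)=\sum_{i=0}^n\lambda_i p_i\in\conv(\{p_0,\ldots,p_n\})$ with $\lambda_i\ge 0$ and $\sum_i\lambda_i=1$. Since $\diam_M(\{p_0,\ldots,p_n\})<\beta$ forces $\Vert p_i-p_j\Vert\le d_M(p_i,p_j)<\beta$, the Euclidean convex hull has diameter less than $\beta$, so $\Vert p_\beta^F(x)-p_0\Vert<\beta$ and in particular $p_\beta^F(x)\in N_\beta(M)$. Applying~\ref{assumptions}~(M3) then gives $\Vert\pi_\beta^F(p_\beta^F(x))-p_\beta^F(x)\Vert<\varepsilon_\beta$, whence $\Vert\pi_\beta^F(p_\beta^F(x))-p_0\Vert<\beta+\varepsilon_\beta<\delta$; the intrinsic comparison~\ref{assumptions}~(M2) upgrades this to $d_M(\pi_\beta^F(p_\beta^F(x)),p_0)<\xi(\beta+\varepsilon_\beta)$.

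For the Hausmann side, Assumption~(H) gives directly that $T_\beta(\iota_\beta^F(x))=T(x)\in T(\sigma)\subset B_{d_M}(p_0,\beta)$, that is $d_M(T(x),p_0)<\beta$ (note that $\sigma$ is the common carrier of $x$ in both $\Ri_\beta(F)$ and $\Ri_\beta(M)$). Combining the two estimates through the common reference vertex $p_0$ by the triangle inequality in $d_M$ yields
\[
d_M\!\left(\pi_\beta^F(p_\beta^F(x)),\,T_\beta(\iota_\beta^F(x))\right)<\xi(\beta+\varepsilon_\beta)+\beta,
\]
and the right-hand side is precisely the quantity controlled by the hypothesis $\beta+\xi(\beta+\varepsilon_\beta)<\rho(M)$. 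Since $x$ was arbitrary, the two maps are everywhere $\rho(M)$-close, so~\ref{assumptions}~(M1) finishes the proof and the square~\eqref{projectiondiagram-1} commutes up to homotopy.

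The step that requires the most care is the passage between the ambient Euclidean estimates and the intrinsic $d_M$-estimates: one must first verify the containment $p_\beta^F(x)\in N_\beta(M)$ so that~\ref{assumptions}~(M3) is legitimately applicable, and then confirm that the single comparison point $p_0$ is well chosen independently of which simplex is used. Because every point of the realization lies in the interior of a unique carrier, the vertex $p_0$ is unambiguous and the resulting bound does not depend on the choice; the remaining inequalities are routine chained estimates of exactly the same flavor as those in Lemma~\ref{pifbs}. The only real arithmetic check is that the two prescribed smallness conditions on $\beta$ match the two places where $\delta$ and $\rho(M)$ are invoked, which they do by design.
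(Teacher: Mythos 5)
Your proof is correct and follows essentially the same route as the paper's: both establish pointwise $\rho(M)$-closeness of the two composites by showing $p_{\beta}^{F}(x)\in N_{\beta}(M)$, applying (M3) and then (M2) to get $d_{M}\bigl(\pi_{\beta}^{F}(p_{\beta}^{F}(x)),p_{i}\bigr)<\xi(\beta+\varepsilon_{\beta})$, combining this with the bound $d_{M}(T(\iota_{\beta}^{F}(x)),p_{i})<\beta$ coming from Assumption (H) (the paper cites Lemma~\ref{diamhausmann}(2), which is the same estimate), and closing with (M1). The only cosmetic differences are that you anchor the triangle inequality at the single vertex $p_{0}$ instead of all $p_{i}$, and you omit the paper's preliminary reduction to finite subsets $F$ via the Whitehead-topology compactness claim, which is indeed dispensable here since the pointwise estimate and (M1) apply to arbitrary $F$ directly.
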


\begin{proof}

Recalling that the complex $\Ri_\beta(F)$ is endowed with Whitehead topology, that is, the weak topology with respect to the set of all simplices, we see from the Claim in the proof of Proposition~\ref{prop:VRlimit} that every compact subset of $\Ri_\beta(M)$ is contained in a finite subcomplex.  
Thus, it suffices to prove the above for each finite subset $F$ of $M$.

Let $T = T_\beta$ for simplicity. 
Take a point $x\in \Ri_\beta(F)$ and choose a simplex $\sigma=[p_{0},\ldots,p_{n}]$ of $\Ri_\beta(F)$:  $\{p_{0},\ldots, p_{n}\} \subset F$, $d_{M}(p_{i},p_{j}) < \beta,~ i,j = 0,\ldots, n$, such that $\displaystyle x= \sum_{i=0}^{n}\lambda_{i}p_{i}$.
We have $T(\iota_{\beta}^{F}(x)) \in T(\sigma)$ and by Lemma \ref{diamhausmann} (2), 
\begin{equation}\label{ipi1m}
d_{M}(T(\iota_{\beta}^{F}(x)),p_{i}) < \beta,~~ i=0,\ldots,n.
\end{equation}
On the other hand, $p_{\beta}^{F}(x) \in \conv(\{p_{0},\ldots,p_{n}\})$ and 
\[
\diam_{\mathbb{R}^N}(\conv(\{p_{0},\ldots,p_{n}\}) \leq \diam_{M}(\conv(\{p_{0},\ldots,p_{n}) < \beta,
\]
which implies $p_{\beta}^{S}(x) \in N_{\beta}(M)$.  It follows from~\ref{assumptions}~(M3) 
\[
\Vert \pi_{\beta}^{F}(p_{\beta}^{F}(x)) - p_{\beta}^{F}(x)\Vert 
< \varepsilon_\beta.
\]
From this, we obtain
\[
\Vert \pi_{\beta}^{F}(p_{\beta}^{F}(x)) - p_{i}\Vert \leq 
\Vert \pi_{\beta}^{F}(p_{\beta}^{F}(x)) - p_{\beta}^{F}(x) \Vert + 
\Vert p_{\beta}^{F}(x) - p_{i} \Vert < \varepsilon_{\beta}+\beta.
\]
From~\ref{assumptions}~(M2) it follows that
\begin{equation}\label{ipi2m}
d_{M}(\pi_{\beta}^{F}(x),p_{i}) < \xi (\beta+\varepsilon_{\beta}),~~i=0,\ldots, n.
\end{equation}
From (\ref{ipi1m}) and (\ref{ipi2m}), we have
\[
d_{M}(T(\iota_{\beta}^{F}(x)),\pi_{\beta}^{F}(p_{\beta}^{F}(x))) < \beta+\xi(\beta+\varepsilon_{\beta}) < \rho(M).
\]
Thus, the maps $T\circ \iota_{\beta}^F$ and $\pi^{F}_{\beta}\circ p_{\beta}^{F}$ are $\rho(M)$-close and hence they are homotpic.
This completes the proof.
\end{proof}

%

For a finite set $S \in \mathbb S$, the Vietoris-Rips shadow projection $p_{\beta}^{S}\colon \Ri_\beta(S) \to \sh(\Ri_\beta(S))$ induces homomorphism on the $m$-homotopy groups:
\[
p_{\beta}^{S}\colon \pi_{m}(\Ri_\beta(S)) \to \pi_{m}(\sh(\Ri_\beta(S))),
\]
which induces a homomorphism on direct limits: 
\[
p_{\beta}^{\mathbb S}\colon \pi_{m}(\Ri_\beta(\mathbb{S})) \to \pi_{m}(\sh(\Ri_\beta(\mathbb{S}))).
\]
Taking the inverse limit with respect to $\beta$, we obtain the homomorphism of the following theorem.

\begin{theorem}\label{limitproj}
The inverse limit homomorphism
\[
\varprojlim_{\beta} p_{\beta}^{\mathbb S}\colon  \varprojlim_{\beta} \pi_{m}(\Ri_\beta(\mathbb{S})) \to \varprojlim_{\beta}\pi_{m}(\sh(\Ri_\beta(\mathbb{S})))
\]
is an isomorphism for each $m\geq0$.

\end{theorem}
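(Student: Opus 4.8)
The plan is to reduce the statement to a diagram chase among three maps that are already known to be isomorphisms, using the Hausmann-type comparison of Proposition~\ref{projectionhausmann} as the bridge between the projection map and the already-understood shadow limit. First I would pass the homotopy-commutative square \eqref{projectiondiagram-1} to homotopy groups and take the direct limit over $S\in\mathbb{S}$. For each fixed admissible $\beta$ this yields a commutative square
\[
\xymatrix{
\pi_m(\Ri_\beta(\mathbb{S})) \ar[r]^{p_\beta^{\mathbb S}} \ar[d]_{\iota_\beta^{\mathbb S}} & \pi_m(\sh(\Ri_\beta(\mathbb{S}))) \ar[d]^{\pi_\beta^{\mathbb S}} \\
\pi_m(\Ri_\beta(M)) \ar[r]_{T_\beta} & \pi_m(M),
}
\]
in which $\iota_\beta^{\mathbb S}\colon\pi_m(\Ri_\beta(\mathbb{S}))\to\pi_m(\Ri_\beta(M))$ is the direct-limit comparison map, an isomorphism by Remark~\ref{rem:VR1} (Proposition~\ref{prop:VRlimit}), and $T_\beta$ is the Hausmann map, a homotopy equivalence by Hausmann's theorem together with Assumption~(H). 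Thus three constituents of the square are understood; only $p_\beta^{\mathbb S}$ and $\pi_\beta^{\mathbb S}$ remain, and these are precisely the maps that become isomorphisms only after passing to the inverse limit in $\beta$.

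Next I would take the inverse limit $\varprojlim_\beta$ of the whole square. Using functoriality of $\varprojlim$ and the naturality squares relating the inclusions $\Ri_\gamma\hookrightarrow\Ri_\beta$ and $\sh(\Ri_\gamma)\hookrightarrow\sh(\Ri_\beta)$ (which make each of $p_\beta^{\mathbb S}$, $\iota_\beta^{\mathbb S}$, $\pi_\beta^{\mathbb S}$, $T_\beta$ a morphism of inverse systems over $\beta$), the square passes to
\[
\pi_\infty^{\mathbb S}\circ\varprojlim_\beta p_\beta^{\mathbb S}=\left(\varprojlim_\beta T_\beta\right)\circ\left(\varprojlim_\beta\iota_\beta^{\mathbb S}\right),
\]
where the target of the bottom row is $\varprojlim_\beta\pi_m(M)=\pi_m(M)$, the constant system. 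Here $\pi_\infty^{\mathbb S}$ is an isomorphism by Theorem~\ref{thm:limitpi}. The map $\varprojlim_\beta\iota_\beta^{\mathbb S}$ is an isomorphism because it is the inverse limit of a level-wise isomorphism of inverse systems. Finally, $\varprojlim_\beta T_\beta$ is an isomorphism: since every $T_\beta$ is an isomorphism and $T_\beta\circ\iota_{\beta,\gamma}=T_\gamma$ by construction of the Hausmann map, the connecting maps $\iota_{\beta,\gamma}$ of the system $\{\pi_m(\Ri_\beta(M))\}$ are themselves isomorphisms, so that system is pro-isomorphic to the constant system $\{\pi_m(M)\}$.

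With these three isomorphisms in hand, the right-hand side of the displayed identity is a composite of isomorphisms, hence $\pi_\infty^{\mathbb S}\circ\varprojlim_\beta p_\beta^{\mathbb S}$ is an isomorphism; precomposing with $(\pi_\infty^{\mathbb S})^{-1}$ shows $\varprojlim_\beta p_\beta^{\mathbb S}$ is an isomorphism, as required. The same argument with homology in place of homotopy yields the homological statement.

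I expect the main obstacle to lie not in the final diagram chase but in the careful bookkeeping needed to justify passing to the double limit: verifying that the four families of maps are genuine morphisms of inverse systems over $\beta$ (that is, that all squares built from the inclusions $\Ri_\gamma\hookrightarrow\Ri_\beta$ and $\sh(\Ri_\gamma)\hookrightarrow\sh(\Ri_\beta)$ commute), and confirming that each $T_\beta$---not merely $T_{\beta_0}$---is a homotopy equivalence and that the $T_\beta$ are mutually compatible with the inclusions, so that $\{\pi_m(\Ri_\beta(M))\}$ is genuinely pro-constant. Once these compatibilities are secured, the interchange of $\varinjlim_S$ and $\varprojlim_\beta$ together with the identification of $\varprojlim_\beta T_\beta$ as an isomorphism is the crux, and the remainder is formal.
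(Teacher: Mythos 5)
Your proposal is correct and follows essentially the same route as the paper's proof: both pass the homotopy-commutative square of Proposition~\ref{projectionhausmann} to homotopy groups, take $\varinjlim_{S}$ to get diagram~\eqref{projectiondiagram}, invoke Remark~\ref{rem:VR1} for $\iota_\beta^{\mathbb S}$ and Hausmann's theorem for $T_\beta$, and then combine $\varprojlim_\beta$ with Theorem~\ref{thm:limitpi} to peel off $\pi_\infty^{\mathbb S}$. The only (cosmetic) difference is that the paper takes the inverse limit of the single composite $\pi_\beta^{\mathbb S}\circ p_\beta^{\mathbb S}$, which is a levelwise isomorphism into the constant system, whereas you take limits edge-by-edge and verify $\varprojlim_\beta T_\beta$ and $\varprojlim_\beta \iota_\beta^{\mathbb S}$ separately---slightly more bookkeeping, same argument.
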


\begin{proof}
We start with taking the direct limit $\varinjlim_{S}$ in the diagram (\ref{projectiondiagram-1}) to obtain a commutative diagram:

\begin{equation}\label{projectiondiagram}
\xymatrix{
  \pi_{m}(\Ri_{\beta}(\mathbb{S})) \ar[r]^{p_{\beta}^{\mathbb S}} \ar[d]_{\iota_{\beta}^{\mathbb S}} & \pi_{m}(\sh(\Ri_\beta (\mathbb{S}))) \ar[d]^{\pi_{\beta}^{\mathbb S}}
\\
\pi_{m}(\Ri_\beta(M)) \ar[r]_{T} & \pi_{m}(M)
}
\end{equation}
Recalling that the complex $\Ri_\beta(M)$ is endowed with Whitehead topology, we see that the homomorphism
\[
\iota_{\beta}^{\mathbb S}\colon \pi_{m}(\Ri_\beta(\mathbb{S})) \to \pi_{m}(\Ri_\beta(M))
\]
is an isomorphism; see Remark~\ref{rem:VR1}. 
Also, $T$ is an isomorphism by \cite{hausmann1995vietoris}.  
Hence we see that the homomorphism $\pi_{\beta}^{\mathbb S}\circ p_{\beta}^{\mathbb S}$ is an isomorphism.  
Taking the inverse limit, we see
\[
\pi_{\infty}^{\mathbb S} \circ \varprojlim_{\beta}p_{\beta}^{\mathbb S} ~\mbox{is an isomorphism}.
\]
Now by Theorem~\ref{thm:limitpi}, $\pi_{\infty}^{\mathbb S}$ is an isomorphism, and hence so is $\varprojlim_{\beta}p_{\beta}^{\mathbb S}$.
This proves the theorem.
\end{proof}

\section{Limit theorems for noisy samples}\label{sec:shadowLimitNoisy}

Thus far, we have considered only finite sample sets $S$ that lie directly on $M$.
In this section, we study the case where samples are taken from a neighborhood $N_\tau(M)$ of the compact subset $M$ of $\mathbb{R}^N$.

From the shape reconstruction \cite{SushThesis} viewpoint, it is natural to examine the {\it Euclidean} Vietoris-Rips complex $\Ri^{\mathbb{R}^N}_\beta(S)$ and its shadow $\sh(\Ri^{\mathbb{R}^N}_\beta(S))$ of a sample set $S \subset N_{\tau}(M)$ equipped with the Euclidean metric.  
On the other hand, another metric {\it the $\varepsilon$-path metric}  on the sample set $S$ was introduced \cite{fasy2022reconstruction,majhi2023vietoris,MajhiStability} in the reconstruction context to obtain homotopy equivalence $\Ri_\beta(S) \simeq M$ for any sufficiently small $\beta$ and for any sample set $S$ sufficiently close to $M$ in the Hausdorff distance---when $M$ is a Euclidean-embedded graph~\cite{majhi2023vietoris} or a CAT($\kappa$) space of $\mathbb R^{N}$~\cite{MajhiStability}.  
For sufficiently small $\beta,\tau>0$, the $\varepsilon$-path metric $d^\varepsilon$ has the bounded local distortion with respect to the Euclidean distance.  Our setup below is slightly more general than that for the metric $d^{\varepsilon}$.

{\bf Assumption (N)}.  ~~
We assume that the neighborhood $N(M)$ of the conditions (M1-M3) admits a metric $d_0$ such that
\begin{itemize}
\item[(N)] there exists $\delta_{0}>0$ and $\kappa_{1},\kappa_{2} >1$ such that for each pair of points $p,q$ of $N(M)$ with $\max\left\{\Vert p- q\Vert, d_{0}(p,q)\right\}  < \delta_0$, we have
\[
\kappa_{1}^{-1}d_{0}(p,q) \leq \Vert p-q\Vert \leq \kappa_{2}d_{0}(p,q).
\]
\end{itemize}
For $\tau>0$ with $N_{\tau}(M) \subset N(M)$, $d_\tau$ denotes the restriction of the metric $d_0$ on $N(M)$.

\begin{remark}
Note for the $\varepsilon$-path metric $d^{\varepsilon}$, we have
\[
\Vert p-q \Vert < \varepsilon ~~\Rightarrow d^{\varepsilon}(p,q) = \Vert p-q \Vert.
\]
This implies
\[
\beta< \varepsilon \Rightarrow \Ri_{\beta}^{\mathbb R^{N}}(N_{\tau}(M)) = \Ri_{\beta}^{d_\tau}(N_{\tau}(M)).
\]
\end{remark}

\bigskip

In what follows, the set $\{ (\beta,\tau)\mid \beta, \tau >0 \}$ is regarded as a directed set by the order
\[
(\beta_{1},\tau_{1}) \geq (\beta_{2},\tau_{2})~~\Leftrightarrow \beta_{1} \leq \beta_{2}~\mbox{and}~\tau_{1}\leq \tau_{2}.
\]

\bigskip

A natural question is whether our limit results for noisy samples depend on the choice of metrics on $N_{\tau}(M)$.
The next proposition answers the question. For $\beta>0$ with $\kappa_{1}\kappa_{2} \beta < \delta_0$, $S_{1},S_{2}\in \mathbb{S}_\tau$, we have the following inclusions from Assumption (N):
\[
\xymatrix{
  \sh(\Ri_{\beta}^{d_\tau} (S_{1})) \ar[r]^{j_{\beta}^{S_1}} \ar[d] & 
  \sh(\Ri_{\kappa_{2}\beta}^{\mathbb{R}^N} (S_{1})) \ar[r]^{k_{\beta}^{S_1}}  \ar[d] &
  \sh(\Ri_{\kappa_{1}\kappa_{2}\beta}^{d_\tau}(S_{1})) \ar[d]
\\
\sh(\Ri_{\beta}^{d_\tau}(S_{2})) \ar[r]_{j_{\beta}^{S_2}}  & 
  \sh(\Ri_{\kappa_{2}\beta}^{\mathbb{R}^N} (S_{2})) \ar[r]_{k_{\beta}^{S_2}}  &
  \sh(\Ri_{\kappa_{1}\kappa_{2}\beta}^{d_\tau}(S_{2})).
}
\]
Where the vertical arrows also represent appropriate inclusions. For $\beta,\gamma$ with
$0< \kappa_{1}\kappa_{2}\gamma < \beta$, we take the direct limits of the corresponding homotopy groups and obtain: 
\[
k_{\kappa_{2}\gamma}^{\mathbb{S_{\tau}}} \circ j_{\gamma}^{\mathbb{S}_\tau} = \iota_{\beta,\gamma}^{d_{\tau}}
\]
and
\[
j_{\beta}^{\mathbb S_\tau} \circ k_{\gamma}^{\mathbb S_\tau}= \iota_{\beta,\gamma}^{\mathbb R^N}.
\]
where 
$\iota_{\beta,\gamma}^{d_\tau}:\pi_{m}(\sh(\Ri_{\gamma}^{d_\tau}(\mathbb S_{\tau})) \to 
\pi_{m}(\sh(\Ri_{\beta}^{d_\tau}(\mathbb S_{\tau})))$ and
$\iota_{\beta,\gamma}^{\mathbb R^N}:\pi_{m}(\sh(\Ri_{\gamma}^{d_\tau}(\mathbb S_{\tau})) \to 
\pi_{m}(\sh(\Ri_{\beta}^{\mathbb R^N}(\mathbb S_{\tau})))$ 
are homomorphisms induced by inclusions.

The same holds for the Vietoris-Rips complexes.  From these, we conclude the following proposition.
\begin{proposition}\label{EuclideanM}
The inverse limit homomorphisms
\[
\begin{array}{ll}
\varprojlim_{(\beta,\tau)} j_{\beta}^{\mathbb{S}_{\tau}}\colon  \varprojlim_{(\beta,\tau)} \pi_{m}(\Ri_{\beta}^{d_\tau} (\mathbb{S}_{\tau})) \to \varprojlim_{(\beta,\tau)}\pi_{m}(\Ri_\beta^{\mathbb{R}^N} (\mathbb{S}_{\tau})),\\ 
\varprojlim_{(\beta,\tau)} j_{\beta}^{\mathbb{S}_{\tau}}\colon  \varprojlim_{(\beta,\tau)} \pi_{m}(\sh(\Ri_{\beta}^{d_\tau} (\mathbb{S}_{\tau}))) \to \varprojlim_{(\beta,\tau)}\pi_{m}(\sh(\Ri_\beta^{\mathbb{R}^N} (\mathbb{S}_{\tau}))) 
\end{array}
\]
are isomorphisms.
\end{proposition}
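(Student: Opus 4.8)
The plan is to exploit the two interleaving identities
\[
k_{\kappa_2\gamma}^{\mathbb{S}_\tau} \circ j_{\gamma}^{\mathbb{S}_\tau} = \iota_{\beta,\gamma}^{d_\tau}
\quad\text{and}\quad
j_{\beta}^{\mathbb{S}_\tau} \circ k_{\gamma}^{\mathbb{S}_\tau} = \iota_{\beta,\gamma}^{\mathbb{R}^N}
\]
established immediately above the statement, which say that the bonding maps of each of the two inverse systems factor through the other. My strategy is to show that this makes the two inverse systems isomorphic as pro-objects, exactly in the spirit of the pro-HTOP isomorphism used in Proposition~\ref{prop:SbetaM}, and then to read off the claimed isomorphism on the inverse limits.

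First I would record that the identities hold already at the level of spaces, before passing to homotopy groups: since $\kappa_1,\kappa_2>1$, the composite $k\circ j$ is literally the inclusion of $\sh(\Ri_\gamma^{d_\tau}(S))$ into the shadow at the strictly larger $d_\tau$-scale $\kappa_1\kappa_2\gamma$, and symmetrically for $j\circ k$; these are precisely the bonding maps of the respective systems. Naturality in $S$ (the commutative ladder displayed before the proposition) lets me pass to $\varinjlim_{S\in\mathbb{S}_\tau}$ and then to homotopy groups, producing the two displayed identities of homomorphisms; naturality in $\tau$ likewise holds because $j,k$ are induced by set-theoretic inclusions compatible with enlarging the sample neighborhood. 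Thus the maps $j$ and $k$ assemble into morphisms of inverse systems indexed by the joint directed set $\{(\beta,\tau)\}$.

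Next I would prove the elementary general fact that two inverse systems of groups related by such interleaving maps have canonically isomorphic inverse limits, with $\varprojlim j$ and $\varprojlim k$ mutually inverse. Concretely, an element of $\varprojlim \pi_m(\sh(\Ri_\beta^{d_\tau}(\mathbb{S}_\tau)))$ is a compatible family $(x_{(\beta,\tau)})$; applying $j$ coordinatewise gives a compatible family in the Euclidean system, and applying $k$ recovers the original family after moving to scale $\kappa_1\kappa_2\beta$ — but this is harmless, because compatibility under the bonding map $\iota_{\beta,\kappa_1\kappa_2\beta}^{d_\tau}$ is exactly the relation $k\circ j=\iota^{d_\tau}$, and the scales $\{(\kappa_1\kappa_2)^n\beta\}$ form a cofinal subset of the scale directed set. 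Running the same coordinate chase with the roles of $j$ and $k$ exchanged shows $\varprojlim k$ is a two-sided inverse of $\varprojlim j$, which is the desired isomorphism for shadows.

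Finally, I would observe that nothing in the argument used the convex-hull structure of the shadow: the identical ladder of inclusions and the identical two identities hold verbatim for the Vietoris--Rips complexes $\Ri_\beta^{d_\tau}(\mathbb{S}_\tau)$ and $\Ri_\beta^{\mathbb{R}^N}(\mathbb{S}_\tau)$, so the first isomorphism follows by the same reasoning. The main obstacle I anticipate is bookkeeping the scale shift: because $j$ and $k$ inflate the scale by the factors $\kappa_2$ and $\kappa_1$ respectively, the composites do not return to the starting index but to $\kappa_1\kappa_2$ times it, so the mutual-inverse statement must be formulated against a cofinal subsystem of scales rather than index-by-index, and one must check that the constant threshold $\delta_0$ from Assumption~(N) is respected uniformly as $\beta\to 0$, so that $j$ and $k$ are genuinely defined on a cofinal set of scales.
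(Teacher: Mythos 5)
Your proposal takes essentially the same route as the paper, which likewise derives the two interleaving identities $k_{\kappa_2\gamma}^{\mathbb{S}_\tau}\circ j_\gamma^{\mathbb{S}_\tau}=\iota_{\beta,\gamma}^{d_\tau}$ and $j_\beta^{\mathbb{S}_\tau}\circ k_\gamma^{\mathbb{S}_\tau}=\iota_{\beta,\gamma}^{\mathbb{R}^N}$ from the ladder of inclusions and then concludes the limit isomorphism directly from this pro-isomorphism; your write-up simply makes explicit the coordinate chase and the $\delta_0$-cofinality bookkeeping that the paper leaves implicit. One cosmetic slip: since $\kappa_1\kappa_2>1$, the family of scales cofinal toward the limit (i.e., tending to $0$) is $\{(\kappa_1\kappa_2)^{-n}\beta\}$ rather than $\{(\kappa_1\kappa_2)^{n}\beta\}$, but this does not affect the argument.
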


\begin{remark}
The above result allows us to consider $\Ri_\beta(S)$ and its shadow for either of the metrics $d^\tau$ and $\Vert \cdot -\cdot \Vert$, as long as we are interested only in the limit results.  It has been pointed out in \cite{majhi2023vietoris} that for an {\it fixed} finite sample $S$ of a neighborhood of a metric graph $\G$, $\Ri_{\beta}^{\mathbb R^{N}}(S)$ may not be homotopy equivalent to $\G$. Such subtlety disappears in the limit.
\end{remark}




\subsection{Hausmann-Type Limit Theorem for Shadow under Noise}
Proposition~\ref{prop:SbetaM} has the following analogue.
\begin{proposition}\label{SbetaMnbd}
Let $\Ri_{\beta}(N_{\tau}(M)) = \Ri_{\beta}^{\mathbb R^{N}} (N_{\tau}(M))$ or
$\Ri_{\beta}^{d_\tau}(N_{\tau}(M)).$ 
Let $\pi_{(\beta,\tau)}\colon \sh(\Ri_{\beta}(N_{\tau}(M)) \to M$ be the restriction of the projection $\pi\colon N_{\tau}(M) \to M$ to $\sh(\Ri_\beta(N_{\tau}(M)))$.
Then 
the inverse limit homomorphism
\[
\varprojlim_{(\beta,\tau)}\pi_{(\beta,\tau)}\colon \pi_{m}(\sh{\Ri_{\beta}(N_{\tau}(M))}) \to \pi_{m}(M)
\]
is an isomorphism.
\end{proposition}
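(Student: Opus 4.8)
The plan is to follow the blueprint of Proposition~\ref{prop:SbetaM} almost verbatim, replacing the vertex set $M$ by the thickened set $N_{\tau}(M)$ and carrying the extra parameter $\tau$ along. Since $M\subset N_{\tau}(M)$, every point of $M$ is a vertex of $\Ri_{\beta}(N_{\tau}(M))$, so there is an inclusion $\jmath_{(\beta,\tau)}\colon M\to \sh(\Ri_{\beta}(N_{\tau}(M)))$; these inclusions are compatible with the bonding inclusions of the inverse system and therefore assemble into a homomorphism $\varprojlim_{(\beta,\tau)}\jmath_{(\beta,\tau)}\colon \pi_m(M)\to \varprojlim_{(\beta,\tau)}\pi_m(\sh(\Ri_{\beta}(N_{\tau}(M))))$. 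I would show that this homomorphism is a two-sided inverse of $\varprojlim_{(\beta,\tau)}\pi_{(\beta,\tau)}$. Throughout I would first restrict to the cofinal set of pairs $(\beta,\tau)$ small enough that $N_{\beta+\tau}(M)\subset N(M)$, which guarantees $\sh(\Ri_{\beta}(N_{\tau}(M)))\subset N(M)$ (any point of the shadow lies within $\beta+\tau$ of $M$) so that $\pi_{(\beta,\tau)}$ is defined.

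The heart of the argument is the pair of relations mirroring \eqref{systemiso}. The first, $\pi_{(\beta,\tau)}\circ\jmath_{(\beta,\tau)}=\operatorname{id}_M$, is immediate since $\pi$ is a retraction onto $M$. For the second I would produce, for a suitable $\beta'=\beta'(\beta,\tau)>\beta$, a homotopy $\jmath_{(\beta',\tau)}\circ \pi_{(\beta,\tau)}\simeq \iota$, where $\iota\colon \sh(\Ri_{\beta}(N_{\tau}(M)))\hookrightarrow \sh(\Ri_{\beta'}(N_{\tau}(M)))$ is the inclusion, using the linear homotopy $H(x,t)=(1-t)x+t\,\pi(x)$. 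The key estimate runs as follows: a point $x\in\sh(\Ri_{\beta}(N_{\tau}(M)))$ lies in $\conv(\{p_1,\dots,p_k\})$ with $p_i\in N_{\tau}(M)$ and diameter less than $\beta$, so $x\in N_{\beta+\tau}(M)$ and (M3) gives $\|\pi(x)-x\|<\varepsilon_{\beta+\tau}$; hence $\|p_i-\pi(x)\|\le\|p_i-x\|+\|x-\pi(x)\|<\beta+\varepsilon_{\beta+\tau}$ for every $i$. Since $\pi(x)\in M\subset N_{\tau}(M)$, the augmented set $\{p_1,\dots,p_k,\pi(x)\}$ stays inside $N_{\tau}(M)$, has Euclidean diameter less than $\beta':=\beta+\varepsilon_{\beta+\tau}$, and spans a simplex, so the whole segment $H(x,\cdot)$ remains in $\conv(\{p_1,\dots,p_k,\pi(x)\})\subset\sh(\Ri^{\mathbb R^N}_{\beta'}(N_{\tau}(M)))$. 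For the $d_\tau$-version I would convert this Euclidean bound via Assumption (N), $d_\tau(p_i,\pi(x))\le\kappa_1\|p_i-\pi(x)\|$, and take $\beta':=\max\{\beta,\kappa_1(\beta+\varepsilon_{\beta+\tau})\}$.

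With these two relations in hand, the conclusion is pure inverse-limit bookkeeping identical to the end of Proposition~\ref{prop:SbetaM}: passing to homotopy groups turns the relations into $\pi_{(\beta,\tau)}\circ\jmath_{(\beta,\tau)}=\operatorname{id}$ and $\jmath_{(\beta',\tau)}\circ\pi_{(\beta,\tau)}=\iota$, and taking $\varprojlim_{(\beta,\tau)}$ shows $\varprojlim\pi_{(\beta,\tau)}$ and $\varprojlim\jmath_{(\beta,\tau)}$ are mutually inverse. The step I expect to be the main obstacle is the reindexing: I must check that $\beta'(\beta,\tau)\to 0$ as $(\beta,\tau)\to(0,0)$, so that for every target index $(\beta,\tau)$ there is a finer index $(\gamma,\tau)$ with $\beta'(\gamma,\tau)<\beta$, making the assignment cofinal in the two-parameter directed set. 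This follows from the hypothesis $\lim_{r\to0}\varepsilon_r=0$ in (M3), exactly as the choice ``$\gamma<\beta$ with $\xi(\gamma+\varepsilon_\gamma)<\beta$'' was made in Proposition~\ref{prop:SbetaM}; the only genuinely new features are that two scales ($\beta$ and $\tau$) must be driven to zero simultaneously and that the diameter estimate must be run in whichever of the two metrics defines $\Ri_{\beta}(N_{\tau}(M))$, which is precisely what Assumption (N) is there to handle.
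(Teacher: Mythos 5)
Your proposal matches the paper's proof essentially verbatim: the same inclusion $\jmath_{(\beta,\tau)}$, the same two relations mirroring \eqref{systemiso}, the same (M3)-based estimate $\Vert \pi_{(\beta,\tau)}(x)-q_i\Vert < \beta+\varepsilon_{\beta+\tau}$ yielding the enlarged scale $\beta+\varepsilon_{\beta+\tau}$, the same linear homotopy, and the same inverse-limit bookkeeping with cofinality supplied by $\lim_{r\to 0}\varepsilon_r=0$. The only cosmetic difference is the $d_\tau$ case: you run the estimate directly through Assumption (N) (where your constant should also pick up a factor $\kappa_2$, since a $d_\tau$-diameter bound $<\beta$ only gives Euclidean diameter $<\kappa_2\beta$), whereas the paper proves only the Euclidean case and deduces the $d_\tau$ case from Proposition~\ref{EuclideanM}.
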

\begin{proof}
Our proof is a straightforward modification of that of Proposition~\ref{prop:SbetaM}. 
We assume that
$\Ri_\beta(N_{\tau}(M)) = \Ri_{\beta}^{\mathbb R^N}(N_\tau(M))$ in the sequel.
For $\beta, \tau>0$ as above, let $\jmath_{(\beta,\tau)}\colon M \to N_{\tau}(M) \to \sh({\Ri_{\beta}(N_{\tau}(M))})$ be the inclusion. 
We prove the following statement:
for each $(\beta,\tau)>0$, 
\begin{equation}\label{systemisoNbd}
\pi_{(\beta,\tau)}\circ \jmath_{(\beta,\tau)}= \operatorname{id}_{M},~~~~
\jmath_{(\beta,\tau)}\circ \pi_{(\beta,\tau)} \simeq \iota_{(\beta,\tau),(\beta+\varepsilon_{\beta+\tau}, \tau)}.
\end{equation}
The above implies the conclusion as in Proposition~\ref{prop:SbetaM}. 

The first equality is straightforward. For the second homotopy, we take a point $x\in \sh(\Ri_{\beta}(N_{\tau}(M)))$ and find finitely many points $q_{1},\ldots,q_{k}$ of $N_{\tau}(M)$ such that
\[
x \in \conv(\{q_{1},\ldots,q_{k}\})~~\mbox{and}~
\diam_{\mathbb{R}^N}(\{q_{1},\ldots,q_{k}\}) < \beta.
\]
We choose points $p_{1},\ldots, p_{k}$ of $M$ such that $\Vert q_{i}-p_{i}\Vert \leq \tau$ for $i=1, \ldots, k$.
Since $x\in N_{\beta+\tau}(M)$, we have from~\ref{assumptions}~(M3) that
\[
\Vert \pi_{(\beta,\tau)}(x)-x \Vert < \varepsilon_{\beta+\tau}
\]
and hence
\[
\Vert \pi_{(\beta,\tau)}(x)-q_{i} \Vert \leq \Vert \pi_{(\beta,\tau)}(x) - x\Vert + 
\Vert x- q_{i} \Vert < \varepsilon_{\beta+\tau} + \beta
\]
for each $i=1,\ldots,k$.  
Thus
\[
\diam_{\mathbb{R}^N}(\{ \pi_{(\beta,\tau)}(x), q_{1},\ldots,q_{k}\}) < \varepsilon_{\beta+\tau}+  \beta.
\]
The linear homotopy 
$H\colon \sh(\Ri_{\beta}(N_\tau(M)))\times [0,1] \to \sh(\Ri_{\beta + \varepsilon_{\tau}}(N_{\tau}(M))$ defined by
\[
H(x,t) = t \cdot \iota_{(\beta,\tau), (\beta + \varepsilon_{\beta+\tau},\tau)}(x) + (1-t)\cdot \jmath_{(\beta,\tau)}(\pi_{(\beta,\tau)}(x)),~~(x,t) \in 
\sh(\Ri_{\beta}(M))\times [0,1] 
\]
yields the desired conclusion.
\end{proof}

\subsection{Latchev-Type Limit Theorem for Shadow under Noise}
In order to obtain an analog of Theorem~\ref{thm:limitpi}, let $\mathbb{S}_{\tau}$ be
the set of all finite subsets of $N_{\tau}(M)$.  For $S_{1},S_{2} \in \mathbb{S}_\tau$ with $S_{1}\subset S_{2}$, the inclusion
\[
\iota_{\beta}^{S_{1},S_{2}}\colon \sh(\Ri_{\beta}(S_{1})) \to \sh(\Ri_{\beta}(S_{2}))
\]
induces a homorphism of the $m$-homotopy groups 
\[
\iota_{\beta}^{S_{1},S_{2}}\colon \pi_{m}(\sh(\Ri_{\beta}(S_{1}))) \to \pi_{m}(\sh(\Ri_{\beta}(S_{2})))
\]
and hence the direct limit
\[
\pi_{m}(\sh(\Ri_{\beta}(\mathbb{S}_{\tau})))\coloneq  \varinjlim_{S\in \mathbb{S}_\tau}
\{\pi_{m}(\sh(\Ri_{\beta}(S))),  \iota_{\beta}^{S_{1},S_{2}} \mid S_{1},S_{2} \in \mathbb{S}_{\tau}, S_{1}\subset S_{2} \}
\]
is defined.  For each $\beta>0$ and $S\in \mathbb{S}_\tau$, let $\pi_{\beta}^{S}\colon \sh(\Ri_\beta(S)) \to M$ be the restriction of the projection $\pi\colon N(M) \to M$.  Since
$\pi_{\beta}^{S_1} = \pi_{\beta}^{S_2}\circ \iota_{\beta}^{S_{1},S_{2}}$ for each pair $S_{1},S_{2}$ of $\mathbb{S}_\tau$ with $S_{1}\subset S_{2}$, $\{\pi_{\beta}^{S}\mid S\in \mathbb{S}_\tau \}$ induces a homormophism
\[
\pi_{\beta}^{\mathbb{S}_{\tau}}\colon \pi_{m}(\sh(\Ri_\beta(\mathbb{S}_{\tau}))) \to \pi_{m}(M).
\]


\begin{theorem}\label{limitpi2}
Let $\Ri_{\beta} = \Ri_{\beta}^{\mathbb R^N}$ or $\Ri_{\beta}^{d_\tau}$.
The inverse limit homomorphism
\[
\varprojlim_{(\beta,\tau)}\pi_{\beta}^{\mathbb{S}_{\tau}}\colon  \varprojlim_{(\beta,\tau)}\pi_{m}(\sh(\Ri_\beta(\mathbb{S}_{\tau}))) \to \pi_{m}(M).
\]
is an isomorphism for each $m$.
\end{theorem}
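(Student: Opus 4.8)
The plan is to transplant the proof of Theorem~\ref{thm:limitpi} into the noisy regime, replacing the canonical inclusion $\jmath_\beta$ of the noiseless case by an averaging map built from the sample. By Proposition~\ref{EuclideanM} the two inverse limits attached to $\Ri^{\R^N}_\beta$ and $\Ri^{d_\tau}_\beta$ are isomorphic, and the projection $\pi$ to $M$ (being the same retraction $N(M)\to M$ in either case) commutes with the inclusion-induced isomorphism of Proposition~\ref{EuclideanM}; hence it suffices to treat the Euclidean complex, and I abbreviate $\Ri_\beta=\Ri^{\R^N}_\beta$. For a finite sample $S\in\mathbb{S}_\tau$ that is $\beta/2$-dense in $M$, I define $f_{(\beta,\tau)}^S\colon M\to\sh(\Ri_\beta(S))$ by the same partition-of-unity formula as~\eqref{fbS}, the only change being that the averaged points $x\in B_{\beta/2}(p)\cap S$ are now drawn from $N_\tau(M)$ rather than from $M$. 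Any two such points have Euclidean distance $<\beta$, so they span a simplex of $\Ri_\beta(S)$ and $f_{(\beta,\tau)}^S(p)\in\conv(B_{\beta/2}(p)\cap S)\subset\sh(\Ri_\beta(S))$ is well defined. Since $M\subset N_\tau(M)$, the $\beta/2$-dense samples form a cofinal subfamily of $\mathbb{S}_\tau$, so as in Lemma~\ref{inclusions} the maps $f_{(\beta,\tau)}^S$ descend to a map $f_{(\beta,\tau)}^{\mathbb{S}_\tau}$ on the direct limit.

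Next I would prove the noisy analogue of Lemma~\ref{pifbs}. For the first relation, if $p\in M$ then $f_{(\beta,\tau)}^S(p)$, being a convex combination of points within Euclidean distance $\beta/2$ of $p$, satisfies $\|f_{(\beta,\tau)}^S(p)-p\|<\beta/2$, so it lies in $N_{\beta/2}(M)$; applying (M3) and then (M2) yields $d_M(p,\pi_\beta^S(f_{(\beta,\tau)}^S(p)))\leq\xi(\beta/2+\varepsilon_{\beta/2})$, which is $<\rho(M)$ for small $\beta$, whence $\pi_\beta^S\circ f_{(\beta,\tau)}^S\simeq\operatorname{id}_M$ by (M1). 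For the second relation, take $x\in\sh(\Ri_\beta(S))$ with $x\in\conv(\{q_1,\dots,q_k\})$, $q_i\in S$, $\diam_{\R^N}(\{q_i\})<\beta$; then $x\in N_{\beta+\tau}(M)$, so $\|\pi_\beta^S(x)-x\|<\varepsilon_{\beta+\tau}$ by (M3). A triangle-inequality estimate shows that $\{q_1,\dots,q_k\}\cup\bigl(S\cap B_{\beta/2}(\pi_\beta^S(x))\bigr)$ has Euclidean diameter $<\nu_{(\beta,\tau)}$, where $\nu_{(\beta,\tau)}\coloneqq\tfrac{3}{2}\beta+\varepsilon_{\beta+\tau}$, so the straight-line homotopy $(x,t)\mapsto(1-t)x+t\,f_{(\beta,\tau)}^S(\pi_\beta^S(x))$ lands in $\sh(\Ri_{\nu_{(\beta,\tau)}}(S))$ and gives $\iota_{\nu_{(\beta,\tau)},\beta}^S\circ f_{(\beta,\tau)}^S\circ\pi_\beta^S\simeq\iota_{\nu_{(\beta,\tau)},\beta}^S$. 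Both relations require $(\beta,\tau)$ small enough that the expressions $\beta+\varepsilon_{\beta+\tau}$ fall below $\delta$ and that $\xi(\beta+\varepsilon_{\beta+\tau})<\rho(M)$, which is automatic in the limit $(\beta,\tau)\to(0,0)$.

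From here the limit-passing mirrors the proof of Theorem~\ref{thm:limitpi} verbatim. Taking $\varinjlim_{S\in\mathbb{S}_\tau}$ of the two homotopy-commutative triangles produces diagrams relating $\pi_\beta^{\mathbb{S}_\tau}$ and $f_{(\beta,\tau)}^{\mathbb{S}_\tau}$ at the level of $\pi_m$; taking $\varprojlim_{(\beta,\tau)}$ then shows that $\varprojlim_{(\beta,\tau)}f_{(\beta,\tau)}^{\mathbb{S}_\tau}$ is a two-sided inverse of $\varprojlim_{(\beta,\tau)}\pi_\beta^{\mathbb{S}_\tau}$. In the second of these passages one argues at a fixed $\tau$: given a target index $(\beta,\tau)$ one chooses a larger index $(\gamma,\tau)$ with $\gamma<\beta$ and $\nu_{(\gamma,\tau)}=\beta$, which is solvable once $\gamma$ is small since $\nu_{(\gamma,\tau)}\to0$, and then the second homotopy relation collapses $f_{(\gamma,\tau)}^{\mathbb{S}_\tau}\circ\pi_\gamma^{\mathbb{S}_\tau}$ to the bonding map, exactly as in the noiseless computation.

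I expect the principal difficulty to be organizational rather than computational: one must verify that the single family $\{f_{(\beta,\tau)}^S\}$ is simultaneously compatible with the sample inclusions $S_1\subset S_2$ inside a fixed $N_\tau(M)$, with the scale change $\gamma\leq\beta$, and with the decrease of $\tau$, which \emph{shrinks} $\mathbb{S}_\tau$; only then does $f$ define a morphism of the whole inverse system indexed by the directed set of pairs $(\beta,\tau)$. The metric bookkeeping (tracking $\beta$, $\tau$, and $\varepsilon_{\beta+\tau}$ together) is routine once Proposition~\ref{EuclideanM} has licensed working with one fixed metric. As an alternative route, one could instead deduce the statement from Proposition~\ref{SbetaMnbd} by first identifying $\pi_m(\sh(\Ri_\beta(\mathbb{S}_\tau)))$ with $\pi_m(\sh(\Ri_\beta(N_\tau(M))))$ through the shadow version of Proposition~\ref{prop:VRlimit}; I prefer the direct construction above because it exhibits the inverse map explicitly and sidesteps the weak-versus-subspace topology subtlety attached to the infinite vertex set $N_\tau(M)$.
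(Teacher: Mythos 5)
Your proposal follows the paper's own proof of Theorem~\ref{limitpi2} in all essentials: reduce to the Euclidean complex via Proposition~\ref{EuclideanM}, define $f_{(\beta,\tau)}^{S}$ on $M$ by the same partition-of-unity formula (the paper's \eqref{fbS-1}), establish noisy analogues of Lemmas~\ref{inclusions} and~\ref{pifbs} (the paper's Lemmas~\ref{inclusions2} and~\ref{pifbs2}), and then pass to limits exactly as in Theorem~\ref{thm:limitpi}. Two of your local estimates are in fact cleaner than the paper's: you bound $\Vert f_{(\beta,\tau)}^{S}(p)-p\Vert<\beta/2$ directly by convexity, and in the second homotopy relation you bound $\Vert y-q_{i}\Vert$ through $\pi_{\beta}^{S}(x)$ without detouring through the projected points $p_{i}=\pi(q_{i})$, obtaining $\nu_{(\beta,\tau)}=\tfrac{3}{2}\beta+\varepsilon_{\beta+\tau}$ where the paper gets the larger $\mu_{(\beta,\tau)}=\tfrac{3}{2}\beta+2\varepsilon_{\tau}+\varepsilon_{\beta+\tau}$ (and, since you work in the Euclidean complex, your second relation does not even need (M2)). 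Both constants tend to $0$ in the joint limit, so either works. Your observation that $\beta/2$-density along $M$ suffices is also fine, since $f_{(\beta,\tau)}^{S}$ is only evaluated on $M$; the paper's requirement of density in $N_{\tau}(M)$ is stronger than needed.

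One step would fail as literally stated: in the final index-chase you fix $\tau$ and claim that $\nu_{(\gamma,\tau)}=\beta$ is solvable for small $\gamma$ ``since $\nu_{(\gamma,\tau)}\to 0$.'' At fixed $\tau$ this is false: $\nu_{(\gamma,\tau)}=\tfrac{3}{2}\gamma+\varepsilon_{\gamma+\tau}$, and as $\gamma\to 0$ the term $\varepsilon_{\gamma+\tau}$ does not tend to $0$ --- by (M3) one only has $\lim_{r\to 0}\varepsilon_{r}=0$, and since $N_{\tau}(M)$ genuinely contains points at distance about $\tau$ from $M$, necessarily $\varepsilon_{\gamma+\tau}\geq\tau$ in typical situations (e.g.\ a tubular neighborhood, where $\varepsilon_{r}\approx r$). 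So for a target index $(\beta,\tau)$ with $\beta\leq\tau$ there is no admissible $\gamma$ at the same $\tau$. The repair is immediate, and is what the paper's ``exactly the same way'' implicitly uses: the system is indexed by the directed set of \emph{pairs}, so given $(\beta,\tau)$ choose a pair $(\gamma,\tau')$ with $\gamma<\beta$, $\tau'\leq\tau$, and $\nu_{(\gamma,\tau')}\leq\beta$ --- possible because $\nu_{(\gamma,\tau')}\to 0$ along the joint limit --- then apply your second homotopy relation at $(\gamma,\tau')$ and compose with the bonding homomorphisms (including the inclusion from scale $\nu_{(\gamma,\tau')}$ up to $\beta$ and from $\mathbb{S}_{\tau'}$ into $\mathbb{S}_{\tau}$) to collapse $f_{(\gamma,\tau')}^{\mathbb{S}_{\tau'}}\circ\pi_{\gamma}^{\mathbb{S}_{\tau'}}$ to the bonding map into index $(\beta,\tau)$. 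With that one adjustment your argument is complete and coincides with the paper's.
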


Once again, our proof is a modification of that of Theorem~\ref{thm:limitpi}. For simplicity we give a proof assuming that $\Ri_{\beta} = \Ri_{\beta}^{\mathbb R^{N}}$. The other case follows from this and Proposition \ref{EuclideanM}.

First we introduce a map $f_{(\beta,\tau)}^{S}\colon M\to \Ri_{\beta}(S)$ for $\beta>0$ and $S\in \mathbb{S}_\tau$ which is $\beta/2$-dense in $N_{\tau}(M)$. For a point $q\in N_{\tau}(M)$ and $\varepsilon>0$, let 
\[
B^{\tau}_{\varepsilon}(q) = \{ r\in N_{\tau}(M)\mid \Vert r - q \Vert < \varepsilon\}.
\]
For a point $q\in S$, let
$D^{\tau}_{S}(q) = \{r\in N_{\tau}(M) \mid d_{N_\tau}(r,q) = \min_{x\in S}d_{N_\tau}(x,r)\}$
and choose a continuous function $\lambda_{q}^{S}\colon N_{\tau}(M)\to [0,1]$ such that
\begin{equation}\label{lambda2}
\lambda_{q}^{S}(q) = 1,~~~ \lambda_{q}^{S}|N_{\tau}(M)\setminus D_{S}(q) \equiv 0.
\end{equation}
We define 
\[
\Lambda^{\tau}_{S}(q) = \sum_{x\in B^{\tau}_{\beta/2}(q)\cap S}\lambda_{x}^{S}(q), 
\]
and observe $\Lambda^{\tau}_{S}(q)>0$ for each $q \in N_{\tau}(M)$.

For $\beta>0$, we define a map $f_{(\beta,\tau)}^{S}\colon M\to \sh(\Ri_{\beta}(S))$ as follows.
\begin{equation}\label{fbS-1}
f_{(\beta,\tau)}^{S}(p) = \frac{1}{\Lambda^{\tau}_{S}(p)} \sum_{x\in B^{\tau}_{\beta/2}(p)\cap S} \lambda_{p}^{S}(x)\cdot x
\end{equation}
We have from the definition:
\[
f_{(\beta,\tau)}^{S}(p) \in \conv(B_{\beta/2}^{\tau}(p)\cap S).
\]

The proof of the following lemma is the same as that of Lemma \ref{inclusions}.

\begin{lemma}\label{inclusions2}
For $S, S_{1}, S_{2} \in \mathbb{S}_\tau$ and $0< \beta, \beta_{1} < \beta_{2}$ with
$S_{1}\subset S_{2}$ and $\beta_{2} < \beta_{1}$, 
let $\iota_{\beta}^{S_{1},S_{2}}\colon \sh(\Ri_{\beta}(S_{1})) \to \sh(\Ri_{\beta}(S_{2}))$ and
$\iota_{\beta_{1},\beta_{2}}^{S}\colon \sh(\Ri_{\beta_2}(S)) \to \sh(\Ri_{\beta_1}(S))$ be the inclusions. 
We have the following.
\begin{itemize}
\item[(1)] $\iota_{\beta}^{S_{1},S_{2}} \circ f_{(\beta, \tau)}^{S_{1}} \simeq f_{(\beta,\tau)}^{S_{2}}.$
\item[(2)] $\iota_{\beta_{1},\beta_{2}}^{S} \circ f_{(\beta_{2},\tau)}^{S} \simeq f_{(\beta_{1},\tau)}^{S}.$
\end{itemize}
\end{lemma}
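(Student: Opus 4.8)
The plan is to reuse verbatim the convexity-plus-straight-line device from the proof of Lemma~\ref{inclusions}, now carried out with the Euclidean balls $B^\tau_{\beta/2}(\cdot)$ inside $N_\tau(M)$ in place of the geodesic balls on $M$. For each of the two assertions I would, for every $p \in M$, produce a single simplex of the target shadow whose convex hull contains both relevant $f$-values simultaneously, and then connect them by the linear homotopy carried inside that simplex.

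For part (1), I would fix $\beta > 0$ and $S_1 \subset S_2$ in $\mathbb{S}_\tau$ and take $p \in M$. From the defining formula \eqref{fbS-1} one reads off $f_{(\beta,\tau)}^{S_1}(p) \in \conv(B^\tau_{\beta/2}(p) \cap S_1)$ and $f_{(\beta,\tau)}^{S_2}(p) \in \conv(B^\tau_{\beta/2}(p) \cap S_2)$. Since the points of $B^\tau_{\beta/2}(p) \cap S_2$ lie within Euclidean distance $\beta/2$ of $p$, their pairwise distances are strictly below $\beta$, so they span a simplex of $\Ri_\beta^{\mathbb{R}^N}(S_2)$ and hence $\conv(B^\tau_{\beta/2}(p) \cap S_2) \subset \sh(\Ri_\beta(S_2))$. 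Because $S_1 \subset S_2$ gives $\conv(B^\tau_{\beta/2}(p) \cap S_1) \subset \conv(B^\tau_{\beta/2}(p) \cap S_2)$, the segment
\[
(1-t)\, \iota_\beta^{S_1,S_2}\big(f_{(\beta,\tau)}^{S_1}(p)\big) + t\, f_{(\beta,\tau)}^{S_2}(p), \qquad t \in [0,1],
\]
remains inside this common simplex; letting $p$ vary then yields the homotopy $\iota_\beta^{S_1,S_2} \circ f_{(\beta,\tau)}^{S_1} \simeq f_{(\beta,\tau)}^{S_2}$.

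For part (2), I would instead fix $S \in \mathbb{S}_\tau$ and $\beta_2 < \beta_1$, and use the ball inclusion $B^\tau_{\beta_2/2}(p) \subset B^\tau_{\beta_1/2}(p)$, which gives $\conv(B^\tau_{\beta_2/2}(p) \cap S) \subset \conv(B^\tau_{\beta_1/2}(p) \cap S) \subset \sh(\Ri_{\beta_1}(S))$; the analogous linear homotopy, now valued in $\sh(\Ri_{\beta_1}(S))$, produces $\iota_{\beta_1,\beta_2}^S \circ f_{(\beta_2,\tau)}^S \simeq f_{(\beta_1,\tau)}^S$.

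I do not anticipate any genuine obstacle, since the whole argument rests on monotonicity of the convex hull under enlarging either the sample or the scale, together with the fact that each enlarged hull is an honest simplex of the target shadow. The only point needing a line of care is verifying that the Euclidean diameter of $B^\tau_{\beta/2}(p) \cap S$ stays strictly below $\beta$, so that the ambient simplex really belongs to the Euclidean Vietoris--Rips complex; this is immediate from the triangle inequality and the strict inequality of Definition~\ref{def:VR}. As both $f$-values lie in one convex simplex for every $p$, continuity and naturality of the straight-line homotopy are automatic, exactly as in Lemma~\ref{inclusions}.
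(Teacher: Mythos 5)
Your proof is correct and takes essentially the same route as the paper, which disposes of this lemma by simply stating that the proof is identical to that of Lemma~\ref{inclusions}: both $f$-values at $p$ lie in the common convex hull $\conv\left(B^{\tau}_{\beta/2}(p)\cap S_{2}\right)$ (resp.\ $\conv\left(B^{\tau}_{\beta_{1}/2}(p)\cap S\right)$ for part (2)), and the straight-line homotopy stays inside it. Your added verification that $\diam_{\mathbb{R}^N}\left(B^{\tau}_{\beta/2}(p)\cap S\right)<\beta$, so this hull is genuinely carried by a simplex of the Euclidean Vietoris--Rips complex and hence lies in the target shadow, is the one adaptation the noisy setting requires, and you handle it correctly.
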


%

For $\beta>0$ and $S\in \mathbb{S}_\tau$, $\pi_{\beta}^{S}\colon \sh(\Ri_{\beta}(S)) \to M$ be the restriction of the projection $\pi\colon N(M)\to M$ to the space $\sh(\Ri_{\beta}(S))$.

\begin{lemma}\label{pifbs2}
Assume that $\beta$ and $\tau$ satisfy:
\[
 \beta+\varepsilon_{\beta} + \varepsilon_{\beta+\tau}< \delta,~~~\xi(\beta+\varepsilon_{\beta}) < \rho(M).
\]
For each $\beta/2$-dense finite subset $S$ of $N_\tau(M)$, we have the following.
 
\begin{itemize}
\item[(1)] $\pi_{\beta}^{S} \circ f_{(\beta,\tau)}^{S} \simeq \operatorname{id}_{M}$.
\item[(2)] Let $\mu_{(\beta,\tau)}= 
 (3\beta/2) + 2\varepsilon_{\tau}+ \varepsilon_{\beta+\tau}. $
Then we have
$\displaystyle
\iota_{\mu_{(\beta,\tau)},\beta}^{S}\circ f_{(\beta,\tau)}^{S} \circ \pi_{\beta}^{S} \simeq \iota_{\mu_{(\beta,\tau)},\beta}^{S}\colon \sh(\Ri_{\beta}(S)) \to \sh(\Ri_{\mu_{(\beta,\tau)}}(S)).
$
\end{itemize}
\end{lemma}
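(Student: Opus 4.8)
The plan is to transcribe the proof of Lemma~\ref{pifbs} into the ambient Euclidean setting: both homotopies will be straight-line homotopies in $\mathbb R^N$, and the whole task is to verify that these segments never leave the relevant shadow. Since we work with the \emph{Euclidean} complex $\Ri_\beta=\Ri^{\mathbb R^N}_\beta$, all diameter bookkeeping is Euclidean, so the factor $\xi$ appearing in $\nu_\beta$ disappears; what replaces it is the sampling noise, which enters only through the retraction error of (M3). Throughout I would use that $f_{(\beta,\tau)}^{S}(p)\in\conv(B^\tau_{\beta/2}(p)\cap S)$, so that $\|f_{(\beta,\tau)}^{S}(p)-p\|<\beta/2$, and that $\pi_\beta^{S}=\pi|_{\sh(\Ri_\beta(S))}$ is defined because $\sh(\Ri_\beta(S))\subset N_{\beta+\tau}(M)\subset N(M)$ in the small-scale regime.

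For (1), fix $p\in M$. As $f_{(\beta,\tau)}^{S}(p)$ is a convex combination of sample points within Euclidean distance $\beta/2$ of $p$, it lies in $N_{\beta/2}(M)\subset N_\beta(M)$, so (M3) gives $\|\pi(f_{(\beta,\tau)}^{S}(p))-f_{(\beta,\tau)}^{S}(p)\|<\varepsilon_\beta$. The triangle inequality then yields $\|p-\pi_\beta^{S}(f_{(\beta,\tau)}^{S}(p))\|<\beta/2+\varepsilon_\beta<\delta$, and (M2) upgrades this to $d_M(p,\pi_\beta^{S}(f_{(\beta,\tau)}^{S}(p)))<\xi(\beta/2+\varepsilon_\beta)<\rho(M)$ by the hypotheses. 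Hence $\pi_\beta^{S}\circ f_{(\beta,\tau)}^{S}$ is $\rho(M)$-close to $\operatorname{id}_M$, and (M1) gives $\pi_\beta^{S}\circ f_{(\beta,\tau)}^{S}\simeq\operatorname{id}_M$.

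For (2), I would take the linear homotopy $H(x,t)=(1-t)x+t\,f_{(\beta,\tau)}^{S}(\pi_\beta^{S}(x))$ on $\sh(\Ri_\beta(S))\times[0,1]$. Given $x$, choose a simplex $\{q_1,\ldots,q_k\}\subset S$ of $\Ri_\beta(S)$ with $x\in\conv(\{q_1,\ldots,q_k\})$ and $\diam_{\mathbb R^N}(\{q_1,\ldots,q_k\})<\beta$. Since the $q_i$ lie in $N_\tau(M)$ within a common ball of radius $<\beta$, we have $x\in N_{\beta+\tau}(M)$, so (M3) gives $\|x-\pi_\beta^{S}(x)\|<\varepsilon_{\beta+\tau}$ and thus $\|\pi_\beta^{S}(x)-q_i\|<\beta+\varepsilon_{\beta+\tau}$. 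Each $y\in B^\tau_{\beta/2}(\pi_\beta^{S}(x))\cap S$ satisfies $\|y-\pi_\beta^{S}(x)\|<\beta/2$, so $\|q_i-y\|<(\beta+\varepsilon_{\beta+\tau})+\beta/2=(3\beta/2)+\varepsilon_{\beta+\tau}<\mu_{(\beta,\tau)}$, while $\|q_i-q_j\|<\beta$ and $\|y-y'\|<\beta$. Hence $\{q_1,\ldots,q_k\}\cup(B^\tau_{\beta/2}(\pi_\beta^{S}(x))\cap S)$ is a subset of $S$ of Euclidean diameter $<\mu_{(\beta,\tau)}$, i.e.\ it spans a simplex of $\Ri_{\mu_{(\beta,\tau)}}(S)$. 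Both $x$ and $f_{(\beta,\tau)}^{S}(\pi_\beta^{S}(x))$ lie in its convex hull, so $H(x,\cdot)$ stays in $\sh(\Ri_{\mu_{(\beta,\tau)}}(S))$; reading off $H(\cdot,0)=\iota_{\mu_{(\beta,\tau)},\beta}^{S}$ and $H(\cdot,1)=\iota_{\mu_{(\beta,\tau)},\beta}^{S}\circ f_{(\beta,\tau)}^{S}\circ\pi_\beta^{S}$ gives the claim.

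The only genuinely new step relative to Lemma~\ref{pifbs} — and the one I expect to require the most care — is locating the shadow point $x$ in the correct tube $N_{\beta+\tau}(M)$, since this is where the noise level $\tau$ enters the retraction error and forces the $\varepsilon_{\beta+\tau}$ term. The displayed $\mu_{(\beta,\tau)}=(3\beta/2)+2\varepsilon_\tau+\varepsilon_{\beta+\tau}$ leaves a slack of $2\varepsilon_\tau$ over the estimate above; this slack is exactly what one spends if one prefers to bound $\|q_i-y\|$ by first retracting the two noisy points onto $M$ (each move costing $\varepsilon_\tau$), and it is also what makes the estimate survive the passage to the $d_\tau$-complex via Assumption (N) and Proposition~\ref{EuclideanM}. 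The remaining hypotheses are routine: $\beta/2$-denseness guarantees $\Lambda^\tau_S>0$, hence continuity and well-definedness of $f_{(\beta,\tau)}^{S}$, and the small-scale regime guarantees $\pi$ is defined on the shadow. I therefore expect no deep obstruction: the lemma is a noise-aware, Euclidean transcription of Lemma~\ref{pifbs}.
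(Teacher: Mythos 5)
Your proof is correct and follows essentially the same route as the paper's: the same $\rho(M)$-closeness argument via (M3), (M2), (M1) for part (1), and the same straight-line homotopy inside $\conv\bigl(\{q_{1},\ldots,q_{k}\}\cup(S\cap B^{\tau}_{\beta/2}(\pi_{\beta}^{S}(x)))\bigr)$ for part (2). The only deviation is cosmetic: you bound $\Vert q_{i}-y\Vert$ directly through $\pi_{\beta}^{S}(x)$, obtaining $(3\beta/2)+\varepsilon_{\beta+\tau}$, whereas the paper detours through $p_{i}=\pi(q_{i})$ at a cost of $2\varepsilon_{\tau}$ --- which, as you correctly observe, is exactly the slack built into the stated $\mu_{(\beta,\tau)}$, so both estimates land in $\sh(\Ri_{\mu_{(\beta,\tau)}}(S))$.
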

\begin{proof} 

Take $\beta, \tau$ and $S\in \mathbb{S}_\tau$ as in the hypothesis.

(1)  For a point $p$ of $M$, we have $f_{(\beta,\tau)}^{S}(p) \in \conv(S \cap B^{\tau}_{\beta/2}(p))$ and 
$\displaystyle \diam_{\mathbb{R}^{N}}(S \cap B^{\tau}_{\beta/2}(p))  < \beta.$

For a point $x \in S\cap B_{\beta/2}^{\tau}(p)$, we observe
\begin{eqnarray*}
\Vert f_{\beta,\tau}^{S}(p) - p \Vert &\leq& \Vert f_{\beta,\tau}^{S}(p)-x\Vert + 
\Vert x - p\Vert \\
&\leq& (\beta/2) + (\beta/2) = \beta.
\end{eqnarray*}
Hence $f_{\beta,\tau}^{S}(p) \in N_{\beta}(M)$ and we see
\[
\Vert \pi_{\beta}^{S}(f_{(\beta,\tau)}^{S}(p))-f_{(\beta,\tau)}^{S}(p)\Vert \leq
\varepsilon_{\beta}
\]
by~\ref{assumptions}(M3).
These two imply
\[
\Vert p-\pi_{\beta}^{S}(f_{(\beta,\tau)}^{S}(p))\Vert \leq \Vert p-f_{(\beta,\tau)}^{S}(p) \Vert + \Vert f_{(\beta\tau)}^{S}(p)-\pi_{\beta}^{S}(f_{(\beta,\tau)}^{S}(p))\Vert 
\le \beta+\varepsilon_{\beta} < \delta.
\]
Thus we obtain $d_{M}(p,\pi_{\beta}^{S}(f_{\beta}^{S}(p))) < \xi(\beta+\varepsilon_\beta) < \rho(M)$ for each $p\in M$.  Hence $\pi_{\beta}^{S}\circ f_{\beta}^{S}$ is $\rho(M)$-close to $\operatorname{id}_M$.  Hence, these maps are homotopic.

(2).  For a point $x\in \sh(\Ri_{\beta}(S))$, there exist points $q_{1},\ldots,q_{k}$ of $N_{\tau}(M)$ such that
\[
x\in \conv(\{q_{1},\ldots,q_{k}\}),~~~ \diam_{\mathbb{R}^N}(\{q_{1},\ldots,q_{k}\}) < \beta.
\]
We observe that $x \in N_{\beta+\tau}(M)$.
For each $i=1,\ldots,k$, let $p_{i} = \pi_{\beta}^{S}(q_{i}) \in M$. Since $q_i \in N_{\tau}(M)$, we have from~\ref{assumptions}~(M3) that 
\[
\Vert p_{i}-q_{i}\Vert \leq \varepsilon_\tau.
\]
Then
\begin{eqnarray*}
\Vert \pi_{\beta}^{S}(x) -p_{i}\Vert &\leq& \Vert \pi_{\beta}^{S}(x)-x\Vert + \Vert x-q_{i}\Vert + \Vert q_{i} - p_{i}\Vert \\
&\leq&  \varepsilon_{\beta+\tau}+\beta+\varepsilon_\tau.
\end{eqnarray*}
The last term is less than $\delta$ and hence by~\ref{assumptions}~(M2), we obtain
\[
d_{M}(\pi_{\beta}^{S}(x),p_{i}) < \xi (\beta+\varepsilon_{\beta+\tau}+\varepsilon_{\tau}),~~~i=1,\ldots,k.
\]
Now, for a point $y\in S\cap B^{\tau}_{\beta/2}(\pi_{\beta}^{S}(x))$, we see
\begin{eqnarray*}
\Vert y - q_{i} \Vert &\leq& \Vert y- \pi_{\beta}^{S}(x) \Vert 
+ \Vert \pi_{\beta}^{S}(x) - p_{i} \Vert + \Vert p_{i} -q_{i} \Vert \\
&<& (\beta/2) + (\beta+\varepsilon_{\tau}+\varepsilon_{\beta+\tau}) + \varepsilon_{\tau} = (3\beta/2) + 2\varepsilon_{\tau}+ \varepsilon_{\beta+\tau} \\
&=& \mu_{(\beta,\tau)}.
\end{eqnarray*}
It follows from the above that
\[
\diam_{\mathbb{R}^N}(\{q_{1},\ldots,q_{k}\}\cup (S\cap B^{\tau}_{\beta/2}(\pi_{(\beta,\tau)}(x))) < \mu_{(\beta,\tau)},
\]
and the conclusion (2) follows as in Lemma \ref{pifbs} (2).
%
\end{proof}

Having these lemmas, Theorem \ref{limitpi2} is proved in exactly the same way as that of Theorem~\ref{thm:limitpi}. 
\begin{flushright}
$\Box$
\end{flushright}

\begin{remark}
We can make use of Lemma~\ref{pifbs2} to obtain information on the homotopy group of $M$ as follows: 
fix a pair $(\beta,\tau)$ satisfying the hypothesis of Lemma~\ref{pifbs2} and a $\beta/2$-dense finite subset $S$ of $N_\tau(M)$,
\begin{itemize}
    \item [(i)] Since the map $f_{(\beta,\tau)}^{S}$ induces a monomorphism on homotopy groups by (1) of Lemma~\ref{pifbs2}, the homotopy group $\pi_{m}(M)$ is isomorphic to a subgroup of $\pi_{m}(\sh(\Ri_\beta(S)))$.  In particular, 
    $\pi_{m}(\sh(\Ri_\beta(S))) = 1$ implies that 
    $\pi_{m}(M) = 1$. If $\pi_{m}(\sh(\Ri_\beta(S)))$ is abelian, then so is $\pi_{m}(M)$ and $\operatorname{rank}(\pi_{m}(M)) \leq \operatorname{rank}(\pi_{m}(\sh(\Ri_\beta(S)))$.
    \item[(ii)] By Lemma~\ref{pifbs2} (2), we have
    \[
    \operatorname{Ker}(\pi_{\beta}^{S}) \subset \operatorname{Ker}(\iota_{\mu(\beta,\tau),\beta}^{S})
    \]
    Hence if we find a non-trivial element $\omega$ of $\pi_{m}(\sh(\Ri_{\beta}(S)))$ that survives in $\pi_{m}(\sh(\Ri_{\mu_{(\beta,\tau)}}(S)))$, then $\pi_{\beta}^{S}(\omega)$ is a non-trivial element of $\pi_{m}(M)$. 
    \end{itemize}
Since we have ``quantitative'' estimates for $\beta,\tau$ and $S$ as indicated in Lemma~\ref{pifbs2}, the above observation may be regarded as a partial quantitative estimate on the homotopy group of $M$.
\end{remark}

\subsection{Limit Theorem for Shadow Projection under Noise}

The following assumption is motivated by the Hausmann-type theorems for metric graphs~\cite{majhi2023vietoris} and metric spaces with bounded curvature~\cite{MajhiStability}. For these spaces, the metric $d_\tau$ was chosen as the $\epsilon$-path metric $d^\varepsilon$ for a sufficiently small $\varepsilon$.  For a smooth submanifold, $d_{\tau}$ is chosen as the Euclidean distance.

\bigskip
\noindent
{\bf Assumption (G)}~ For any sufficiently small $\beta, \tau>0$, there associate 
$\eta_{\beta,\tau}>0$ with $\displaystyle \lim_{(\beta,\tau) \to (0,0)}\eta_{\beta,\tau} = 0$ such that the simplicial map $\psi_{\beta,\tau}\colon \Ri^{d_\tau}(N_{\tau}(M)) \to \Ri_{\eta_{\beta,\tau}}(M)$ induced by the projection $\pi_{(\beta,\tau)}\colon N_{\tau}(M) \to M$ is a homotopy equivalence.

\bigskip
Under the above assumption, we study the Vietoris-Rips projection $p_{\beta}^{S}\colon \Ri_{\beta}(S) \to \sh(\Ri_\beta(S))$ of a sample set $S$ in a neighborhood of $M$. Our theorem is stated as follows:

\begin{theorem}\label{limitprojection2}
Under the assumptions (G), (H) and (N), 
the inverse limit homomorphism induced by the system of direct limit homomorphisms
$\{ p_{\beta}^{\mathbb{S}_{\tau}}\colon \pi_{m}(\Ri_\beta^{d_\tau}(\mathbb{S}_{\tau})) \to \pi_{m}(\sh(\Ri_\beta^{d_\tau}(\mathbb{S}_{\tau}))) \mid \beta,\tau > 0\}$:
\[
\left(\varprojlim_{(\beta,\tau)}p_{(\beta,\tau)}^{\mathbb{S}_{\tau}}\right)\colon \varprojlim_{(\beta,\tau)} \pi_{m}(\Ri_{\beta}^{d_\tau}(\mathbb{S}_{\tau})) \to \varprojlim_{(\beta,\tau)}\pi_{m}(\sh(\Ri_\beta^{d_\tau}(\mathbb{S}_{\tau})))
\]
is an isomorphism.
\end{theorem}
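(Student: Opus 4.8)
The plan is to run the argument of Theorem~\ref{limitproj} essentially verbatim, replacing the noiseless comparison of Proposition~\ref{projectionhausmann} by a noisy version adapted to samples lying in $N_{\tau}(M)$ rather than on $M$. The heart of the proof will be the homotopy
\[
\pi_{\beta}^{S}\circ p_{\beta}^{S} \simeq T_{\eta_{\beta,\tau}}\circ\psi_{\beta,\tau}\circ\iota_{\beta}^{S}\colon \Ri_{\beta}^{d_\tau}(S)\to M,
\]
valid for every finite $S\in\mathbb{S}_{\tau}$ and every sufficiently small $(\beta,\tau)$, where $\iota_{\beta}^{S}\colon\Ri_{\beta}^{d_\tau}(S)\to\Ri_{\beta}^{d_\tau}(N_{\tau}(M))$ is the inclusion, $\psi_{\beta,\tau}$ is the projection-induced homotopy equivalence of Assumption (G), and $T_{\eta_{\beta,\tau}}$ is the Hausmann map of Assumption (H) (available once $\eta_{\beta,\tau}<\beta_{0}$). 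This plays the role of the identity $\pi_{\beta}^{F}\circ p_{\beta}^{F}\simeq T_{\beta}\circ\iota_{\beta}^{F}$ from Proposition~\ref{projectionhausmann}.

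To establish this homotopy I would fix $x$ in a simplex $\sigma=[q_{0},\dots,q_{n}]$ of $\Ri_{\beta}^{d_\tau}(S)$, so $q_{i}\in S\subset N_{\tau}(M)$ with $d_{\tau}(q_{i},q_{j})<\beta$, and compare the two images in $M$ pointwise using the $\rho(M)$-closeness criterion (M1). On one side $p_{\beta}^{S}(x)\in\conv\{q_{0},\dots,q_{n}\}$, whose Euclidean diameter is controlled by $\kappa_{2}\beta$ via Assumption (N) for small $(\beta,\tau)$, so $p_{\beta}^{S}(x)\in N_{\kappa_{2}\beta+\tau}(M)$; combining (M3), (M2) and $\|q_{i}-\pi(q_{i})\|\le\varepsilon_{\tau}$ shows that $\pi_{\beta}^{S}(p_{\beta}^{S}(x))=\pi(p_{\beta}^{S}(x))$ lies within a distance of order $\xi(\kappa_{2}\beta+\varepsilon_{\tau}+\varepsilon_{\kappa_{2}\beta+\tau})$ from each $\pi(q_{i})$. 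On the other side $\psi_{\beta,\tau}\circ\iota_{\beta}^{S}$ sends $x$ into the simplex $[\pi(q_{0}),\dots,\pi(q_{n})]$ of $\Ri_{\eta_{\beta,\tau}}(M)$, whence Assumption (H) forces $d_{M}\big(T_{\eta_{\beta,\tau}}(\psi_{\beta,\tau}(x)),\pi(q_{i})\big)<\eta_{\beta,\tau}$. The triangle inequality then bounds $d_{M}\big(\pi_{\beta}^{S}(p_{\beta}^{S}(x)),T_{\eta_{\beta,\tau}}(\psi_{\beta,\tau}(x))\big)$ by a quantity tending to $0$ as $(\beta,\tau)\to(0,0)$, hence eventually $<\rho(M)$, and (M1) yields the homotopy.

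With this in hand the remainder is formal. I would take the direct limit $\varinjlim_{S\in\mathbb{S}_{\tau}}$ to obtain, for each admissible $(\beta,\tau)$, the commuting square
\[
\xymatrix{
\pi_{m}(\Ri_{\beta}^{d_\tau}(\mathbb{S}_{\tau})) \ar[r]^{p_{\beta}^{\mathbb{S}_{\tau}}} \ar[d]_{\iota_{\beta}^{\mathbb{S}_{\tau}}} & \pi_{m}(\sh(\Ri_{\beta}^{d_\tau}(\mathbb{S}_{\tau}))) \ar[d]^{\pi_{\beta}^{\mathbb{S}_{\tau}}}
\\
\pi_{m}(\Ri_{\beta}^{d_\tau}(N_{\tau}(M))) \ar[r]_{T_{\eta_{\beta,\tau}}\circ\psi_{\beta,\tau}} & \pi_{m}(M)
}
\]
Here $\iota_{\beta}^{\mathbb{S}_{\tau}}$ is an isomorphism by Proposition~\ref{prop:VRlimit} (applied with $X=N_{\tau}(M)$ and $\K(S)=\Ri_{\beta}^{d_\tau}(S)$), $\psi_{\beta,\tau}$ is an isomorphism by Assumption (G), and $T_{\eta_{\beta,\tau}}$ is an isomorphism by Assumption (H); hence $\pi_{\beta}^{\mathbb{S}_{\tau}}\circ p_{\beta}^{\mathbb{S}_{\tau}}$ is an isomorphism for every small $(\beta,\tau)$. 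Since these squares are compatible with the transition maps of the scale/sample systems, the family $\{\pi_{\beta}^{\mathbb{S}_{\tau}}\circ p_{\beta}^{\mathbb{S}_{\tau}}\}$ is a level-wise isomorphism from the inverse system $\{\pi_{m}(\Ri_{\beta}^{d_\tau}(\mathbb{S}_{\tau}))\}$ onto the constant system $\{\pi_{m}(M)\}$, so passing to $\varprojlim_{(\beta,\tau)}$ shows that $\big(\varprojlim_{(\beta,\tau)}\pi_{\beta}^{\mathbb{S}_{\tau}}\big)\circ\big(\varprojlim_{(\beta,\tau)}p_{(\beta,\tau)}^{\mathbb{S}_{\tau}}\big)$ is an isomorphism. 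Finally Theorem~\ref{limitpi2} asserts that $\varprojlim_{(\beta,\tau)}\pi_{\beta}^{\mathbb{S}_{\tau}}$ is an isomorphism, and therefore so is $\varprojlim_{(\beta,\tau)}p_{(\beta,\tau)}^{\mathbb{S}_{\tau}}$.

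The hard part will be the first step. Unlike Proposition~\ref{projectionhausmann}, the samples no longer sit on $M$, so the comparison must simultaneously absorb the off-$M$ offset $\varepsilon_{\tau}$, the $d_{\tau}$-versus-Euclidean distortion $\kappa_{1},\kappa_{2}$ of Assumption (N), the retraction displacement $\varepsilon$ of (M3), and the metric distortion $\xi$ of (M2), all while still landing the relevant vertex sets inside $\Ri_{\eta_{\beta,\tau}}(M)$ so that $\psi_{\beta,\tau}$ and $T_{\eta_{\beta,\tau}}$ are even defined. The whole point is that each of these errors vanishes as $(\beta,\tau)\to(0,0)$, which is precisely what renders the $\rho(M)$-closeness criterion (M1) applicable; once the homotopy is secured, the limit bookkeeping is identical to the noiseless case.
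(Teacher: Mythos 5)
Your proposal is correct, and its skeleton coincides with the paper's: a noisy Hausmann-type comparison homotopy certified by the $\rho(M)$-closeness criterion (M1), then direct limits over $\mathbb{S}_{\tau}$ to get a levelwise isomorphism $\pi_{\beta}^{\mathbb{S}_{\tau}}\circ p_{\beta}^{\mathbb{S}_{\tau}}$, then inverse limits, and a final appeal to Theorem~\ref{limitpi2} to cancel the $\pi$-factor --- exactly the structure of the paper's proof of Theorem~\ref{limitproj} transported to the noisy setting. The one genuine difference is where you anchor the comparison. The paper works with the three-row diagram (\ref{Rprojection}), starting from the \emph{Euclidean} complex $\Ri^{\mathbb{R}^N}_{\beta}(S)$, including it into $\Ri^{d_\tau}_{\kappa_{1}\beta}(S)$ via the maps $k_{\beta,\tau}^{S}$ of Assumption (N), and its key homotopy (Lemma~\ref{psipi}) reads $T_{\beta}\circ\psi_{\beta}^{S}\circ k_{\beta}^{S}\simeq\pi_{\beta}^{S}\circ p_{\beta}^{S}\circ\sh(k_{\beta,\tau}^{S})$, with closeness constant $\eta_{\kappa_{1}\beta,\tau}+2\xi(\beta+\varepsilon_{\tau})<\rho(M)$; consequently the paper must invoke Proposition~\ref{EuclideanM} to dispose of the comparison maps $k$ in the limit, and its conclusion for the $d_\tau$ complexes implicitly rides on the cofinal reindexing $\beta\mapsto\kappa_{1}\beta$. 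You instead prove the homotopy $\pi_{\beta}^{S}\circ p_{\beta}^{S}\simeq T_{\eta_{\beta,\tau}}\circ\psi_{\beta,\tau}\circ\iota_{\beta}^{S}$ directly on $\Ri_{\beta}^{d_\tau}(S)$, using (N) only through the $\kappa_{2}$ Euclidean-diameter bound, with the analogous constant $\eta_{\beta,\tau}+\xi(\kappa_{2}\beta+\varepsilon_{\tau}+\varepsilon_{\kappa_{2}\beta+\tau})<\rho(M)$; this yields the statement exactly as phrased (no reindexing, no explicit $k$-maps), and you are in fact more explicit than the paper in justifying the left vertical isomorphism via Proposition~\ref{prop:VRlimit} with $X=N_{\tau}(M)$. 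Two remarks: Proposition~\ref{EuclideanM} is not truly eliminated, since the paper derives the $d_\tau$ case of Theorem~\ref{limitpi2} --- which you use --- from the Euclidean case through that proposition; and what the paper's Euclidean detour buys in exchange is simultaneous control of the projection on $\Ri_{\beta}^{\mathbb{R}^N}(S)$, which your two-row diagram does not address. Your passage from the levelwise isomorphism to the limit is sound, because $\pi\circ p$ commutes strictly (not merely up to homotopy) with all inclusion-induced bonding maps, and restricting to the cofinal subset of sufficiently small $(\beta,\tau)$ does not change the inverse limit.
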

%
%


Before we give a proof of the theorem at the end of the current section, we first need a couple of preparations.

For sufficiently small $\delta_{0} > \beta, \tau>0$, let $\eta_{\beta,\tau}$ be the positive number in Assumption (G). For a finite set $S\in \mathbb{S}_\tau$, we consider the following diagram:
\begin{equation}\label{Rprojection}
\xymatrix{
\Ri^{\mathbb{R}^N}_{\beta}(S) \ar[r]^{p_{\beta}^{S}} 
\ar[d]_{k_{\beta,\tau}^{S}} & 
\sh(\Ri^{\mathbb {R}^N}_{\beta}(S)) \ar[d]^{\sh(k_{\beta,\tau}^{S})} \\
\Ri^{d_\tau}_{\kappa_{1}\beta}(S) \ar[r]^{p_{\beta}^{S}} 
\ar[d]_{\psi_{\kappa_{1}\beta}^{S}} & 
\sh(\Ri^{d_\tau}_{\kappa_{1}\beta}(S)) \ar[d]^{\pi_{\beta}^{S}} \\
\Ri_{\eta_{\kappa_{1}\beta,\tau}}(M) \ar[r]_{T_{\eta_{\kappa_{1}\beta}}} & M 
}
\end{equation}
where $\psi_{\kappa_{1}\beta}^{S}$ is the simplicial map given as in Assumption (G) 
and $T_{\eta_{\kappa_{1}\beta}}$ denotes the Hausmann map given in (H).  Also $k_{\beta,\tau}^{S}$ and $\sh(k_{\beta,\tau}^{S})$ denote the inclusions (see Assumption (N)).

\begin{lemma}\label{psipi}
Assume that $S \in \mathbb{S}_{\tau}$ or $S=N_{\tau}(M)$ and assume that
$\beta, \tau$ satisfy 
\[
\beta < \delta_{0},~~2(\beta+ \varepsilon_{\tau}) < \delta, ~~ 2\xi(\beta+\varepsilon_{\tau})+\eta_{\beta,\tau} <\rho(M).
\] 
Then we have
\[
T_{\beta}\circ \psi_{\beta}^{S} \circ k_{\beta}^{S} \simeq \pi_{\beta}^{S}\circ p_{\beta}^{S} \circ \sh(k_{\beta,\tau}^{S}).
\]
\end{lemma}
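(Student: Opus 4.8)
The plan is to read the two composites of the statement through diagram~(\ref{Rprojection}): the left-hand side $T_\beta\circ\psi_\beta^S\circ k_\beta^S$ is the down-the-left-column-then-along-the-bottom path $\Ri^{\mathbb R^N}_\beta(S)\to\Ri^{d_\tau}_{\kappa_1\beta}(S)\to\Ri_{\eta_{\kappa_1\beta,\tau}}(M)\to M$, while the right-hand side is the along-the-top-then-down-the-right-column path $\pi_\beta^S\circ\sh(k_{\beta,\tau}^S)\circ p_\beta^S$. Thus the assertion is exactly that the outer rectangle of~(\ref{Rprojection}) commutes up to homotopy, and I would obtain it by checking the upper and lower squares separately and pasting them.

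The upper square commutes on the nose. Indeed $k_{\beta,\tau}^S$ is the simplicial inclusion that is the identity on the common vertex set $S$---Assumption~(N) guarantees that a Euclidean $\beta$-simplex is a $d_\tau$-simplex at scale $\kappa_1\beta$---and $\sh(k_{\beta,\tau}^S)$ is the induced inclusion of shadows inside $\mathbb R^N$. Since the shadow projection carries a point with barycentric coordinates $\lambda_i$ over vertices $v_i$ to the honest Euclidean point $\sum_i\lambda_i v_i$, a recipe that depends only on the vertices and their coordinates and not on the ambient complex, we get $p_\beta^S\circ k_{\beta,\tau}^S=\sh(k_{\beta,\tau}^S)\circ p_\beta^S$ as maps. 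Hence it remains only to homotope the lower square $\pi_\beta^S\circ p_\beta^S\simeq T_\beta\circ\psi_\beta^S$.

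This lower square is the noisy analogue of Proposition~\ref{projectionhausmann}, and I would prove it by the same $\rho(M)$-closeness argument, now carrying the extra displacement caused by sampling off $M$. By the Claim inside the proof of Proposition~\ref{prop:VRlimit}, every compact subset lies in a finite subcomplex, so it suffices to check closeness pointwise. Fix $x=\sum_i\lambda_i q_i$ in a simplex $\sigma=[q_0,\dots,q_n]$ of $\Ri^{d_\tau}_{\kappa_1\beta}(S)$ with $q_i\in S\subset N_\tau(M)$, and put $p_i=\pi(q_i)\in M$. On the Hausmann side, $\psi_{\kappa_1\beta}^S(x)$ lies in the simplex $[p_0,\dots,p_n]$ of $\Ri_{\eta_{\kappa_1\beta,\tau}}(M)$, so (for $\beta,\tau$ small enough that Assumptions~(G)--(H) and $2\eta_{\kappa_1\beta,\tau}<\rho(M)$ hold) Lemma~\ref{diamhausmann}(2) gives $d_M\!\left(T_\beta\psi_{\kappa_1\beta}^S(x),p_i\right)<\eta_{\kappa_1\beta,\tau}$ for every $i$. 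On the shadow side, $y:=p_\beta^S(x)\in\conv(\{q_0,\dots,q_n\})$ lies in a bounded Euclidean neighbourhood of $M$; using (N) to turn the intrinsic diameter of $\sigma$ into a Euclidean diameter, the bound $\|q_i-p_i\|<\varepsilon_\tau$ from (M3), and the bound on $\|y-\pi(y)\|$ from (M3), a triangle inequality gives $\|\pi(y)-p_i\|<2(\beta+\varepsilon_\tau)<\delta$, whereupon (M2) upgrades it to $d_M\!\left(\pi_\beta^S p_\beta^S(x),p_i\right)<2\xi(\beta+\varepsilon_\tau)$. Combining the two estimates through $p_i$ yields $d_M\!\left(T_\beta\psi_{\kappa_1\beta}^S(x),\pi_\beta^S p_\beta^S(x)\right)<2\xi(\beta+\varepsilon_\tau)+\eta_{\kappa_1\beta,\tau}<\rho(M)$, so (M1) furnishes the homotopy $\pi_\beta^S\circ p_\beta^S\simeq T_\beta\circ\psi_\beta^S$.

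To finish I would paste the two squares: the strict upper square rewrites the right-hand composite as $\pi_\beta^S\circ\sh(k_{\beta,\tau}^S)\circ p_\beta^S=\pi_\beta^S\circ p_\beta^S\circ k_{\beta,\tau}^S$, and precomposing the lower-square homotopy with $k_{\beta,\tau}^S$ then gives $\pi_\beta^S\circ p_\beta^S\circ k_{\beta,\tau}^S\simeq T_\beta\circ\psi_\beta^S\circ k_{\beta,\tau}^S$, which is the claim; the case $S=N_\tau(M)$ is handled identically after the same compact-subset reduction. I expect the sole real obstacle to be the lower-square estimate: one has to control the off-manifold displacement $\varepsilon_\tau$, the Euclidean-versus-$d_\tau$ distortion from~(N), the Lipschitz factor $\xi$ from~(M2), and the Hausmann spread $\eta_{\kappa_1\beta,\tau}$ from~(G)--(H) simultaneously, and verify that the three hypotheses $\beta<\delta_0$, $2(\beta+\varepsilon_\tau)<\delta$, and $2\xi(\beta+\varepsilon_\tau)+\eta_{\beta,\tau}<\rho(M)$ are precisely what keeps their sum below $\rho(M)$ so that~(M1) can be applied.
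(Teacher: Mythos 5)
Your overall architecture is the paper's: the paper proves the lemma by exactly the pointwise $\rho(M)$-closeness estimate you describe --- take $x$ in a simplex $[x_0,\dots,x_k]$ of $\Ri^{\mathbb{R}^N}_\beta(S)$, bound $d_M\bigl(T(\psi(x)),\pi_\beta^S(x_i)\bigr)<\eta_{\kappa_1\beta,\tau}$ using (N) and Assumption (H), bound $d_M\bigl(\pi_\beta^S(p_\beta^S(x)),\pi_\beta^S(x_i)\bigr)<2\xi(\beta+\varepsilon_\tau)$ via the triangle inequality with (M3) and (M2), add the two, and invoke (M1). Your explicit two-square decomposition of diagram (\ref{Rprojection}), with the strict identity $p_\beta^S\circ k_{\beta,\tau}^S=\sh(k_{\beta,\tau}^S)\circ p_\beta^S$, is a harmless reorganization of the same argument (the paper makes this identification implicitly by starting from a Euclidean simplex).

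There is, however, one genuine quantitative slip in your intermediate claim. You assert the lower-square homotopy $\pi_\beta^S\circ p_\beta^S\simeq T_\beta\circ\psi_\beta^S$ on \emph{all} of $\Ri^{d_\tau}_{\kappa_1\beta}(S)$, but your estimate uses $\lVert y-q_i\rVert<\beta$, hence $\lVert\pi(y)-p_i\rVert<2(\beta+\varepsilon_\tau)$, whereas Assumption (N) only converts $d_\tau$-diameter $<\kappa_1\beta$ into Euclidean diameter $<\kappa_1\kappa_2\beta$, with $\kappa_2>1$. For a general simplex of $\Ri^{d_\tau}_{\kappa_1\beta}(S)$ your bound therefore degrades to $2(\kappa_1\kappa_2\beta+\varepsilon_\tau)$ (and correspondingly in the $\rho(M)$ budget), which the stated hypotheses $2(\beta+\varepsilon_\tau)<\delta$ and $2\xi(\beta+\varepsilon_\tau)+\eta_{\beta,\tau}<\rho(M)$ do not cover. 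This is precisely why the lemma precomposes both sides with $k_\beta^S$: on simplices in the image of $k_\beta^S$ the Euclidean diameter is $<\beta$ by definition, and there your computation coincides with the paper's. The repair is trivial --- run the closeness estimate from the outset on a simplex of $\Ri^{\mathbb{R}^N}_\beta(S)$, as the paper does, instead of claiming the unrestricted lower square. A secondary citation point: under the hypotheses in force in this section, the bound $d_M\bigl(T(\psi(x)),p_i\bigr)<\eta_{\kappa_1\beta,\tau}$ should be drawn from Assumption (H); Lemma \ref{diamhausmann}(2) requires the geodesic conditions (i)--(ii), which are not assumed here --- your parenthetical hedge suggests you sensed this, but (H) is the correct tool.
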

\begin{proof}

Take an arbitrary simplex $\sigma = [x_{0},\ldots, x_{k}]$ of 
$\Ri_\beta^{\mathbb{R}^N}(S)$, where $\{ x_{0},\ldots, x_{k} \} \subset S$ and $\diam_{\mathbb{R}^N}\{ x_{0},\ldots,x_{k} \} < \beta$.  By Assumption (N) we have
$\diam_{d_\tau}(\{ x_{0},\ldots,x_{k} \}) < \kappa_{1} \beta$ and
$\diam_{d_M}(\{ \pi_{\beta}^{S}(x_{0}),\ldots, \pi_{\beta}^{S}(x_{k}) \}) < \kappa_{1} \beta$
.  
Applying Assumption (H), we see that 
\[
d_{M}(T_{\eta_{\kappa_{1}\beta}}(\psi_{\kappa_{1}\beta}^{S}(x)), \pi_{\beta}^{S}(x_{i})) < \eta_{\kappa_{1}\beta}.
\]
On the other hand, we see $p_{\beta}^{S}(x) \in \conv(\{x_{0},\ldots,x_{k}\})$ and 
$\diam_{\mathbb{R}^N}(\conv(\{x_{0},\ldots,x_{k}\})) < \beta$. 
We have
\begin{eqnarray*}
\Vert \pi_{\beta}^{S}(p_{\beta}^{S}(x)) - \pi_{\beta}^{S}(x_{j})  \Vert 
&\leq& \Vert \pi_{\beta}^{S}(x_{j}) - p_{\beta}^{S}(x)\Vert + \Vert p_{\beta}^{S}(x) - x_{i} \Vert + \Vert x_{i}-\pi_{\beta}^{S}(x_{i})\Vert \\
&\leq& (\beta+\varepsilon_{\tau}) + \beta + \varepsilon_{\tau} < \delta.
\end{eqnarray*}
Hence we obtain
\begin{equation}\label{tpsi2}
d_{M}(\pi_{\beta}^{S}(p_{\beta}^{S}(x)) , \pi_{\beta}^{S}(x_{j})) < 2\xi(\beta+\varepsilon_{\tau}).
\end{equation}
These two imply
\[
d_{M}(\pi_{\beta}^{S}(p_{\beta}^{S}(x)) , T_{\eta_{\kappa_{1}\beta}}(\psi_{\kappa_{1}\beta}^{S}(x))) < \eta_{\kappa_{1}\eta}+ 2\xi(\beta+\varepsilon_{\tau}) <\rho(M).
\]
From this, we obtain the desired conclusion.
%

\end{proof}

We finally prove Theorem~\ref{limitprojection2}.
\begin{proof}[Proof of Theorem~\ref{limitprojection2}]
    
Passing to the homotopy groups in the diagram (\ref{Rprojection}) using 
Assumption (G), Assumption (H) and Lemma \ref{psipi} and furthermore, taking the direct limits, we obtain the next  commutative diagram:
\begin{equation}
\xymatrix{
\pi_{m}(\Ri^{\mathbb{R}^N}_{\beta}(\mathbb{S}_{\tau})) \ar[r]^{p_{\beta}^{\mathbb{S}_{\tau}}} \
\ar[d]_{k_{\beta,\tau}^{\mathbb{S}_{\tau}}} & 
\pi_{m}(\sh(\Ri^{\mathbb {R}^N}_{\beta}(\mathbb{S}_{\tau}))) \ar[d]^{\sh(k_{\beta,\tau}^{\mathbb{S}_{\tau}})} \\
\pi_{m}(\Ri^{d_\tau}_{\kappa_{1}\beta}(\mathbb{S}_{\tau})) \ar[r]^{p_{\beta}^{\mathbb{S}_{\tau}}} 
\ar[d]_{\psi_{\kappa_{1}\beta}^{\mathbb{S}_{\tau}}} & 
\pi_{m}(\sh(\Ri^{d_\tau}_{\kappa_{1}\beta}(\mathbb{S}_{\tau}))) \ar[d]^{\pi_{\beta}^{\mathbb{S}_{\tau}}} \\
\pi_{m}(\Ri_{\eta_{\kappa_{1}\beta,\tau}}(M)) \ar[r]_{T_{\eta_{\kappa_{1}\beta}}} & \pi_{m}(M) 
}
\end{equation}


Here $\psi_{\kappa_{1}\beta}^{\mathbb{S}_\tau}$ is an isomorphism by Assumption (G) and $T_{\eta_{\kappa_{1}\beta}}$ is an isomorphism by Assumption (H). 

Now, we pass to the inverse limits $\varprojlim_{(\beta,\tau)}$.  The limit homomorphism $\varprojlim_{(\beta,\tau)}k_{\beta}^{\mathbb{S}_{\tau}}$ is an isomorphism by Lemma \ref{EuclideanM}. Moreover, the limit homomorphism
\[
\varprojlim_{(\beta,\tau)} (\pi_{\beta}^{\mathbb{S}_{\tau}} \circ \sh(k_{\beta,\tau}^{\mathbb{S}_{\tau}}))\colon 
\varprojlim_{(\beta,\tau)} \pi_{m}(\sh(\Ri_{\beta}^{\mathbb{R}^N}(\mathbb{S}_{\tau}))) \to  \pi_{m}(M)
\]
is an isomorphism by Theorem \ref{limitpi2}.  Hence, we see that 
$\displaystyle \varprojlim_{(\beta,\tau)}p_{\beta}^{\mathbb{S}_\tau}$ is an isomorphism.  

This completes the proof.
\end{proof}

\section{Towards Finite Reconstruction of Closed Curves}\label{sec:recon}
In view of our Question (b) in Section~\ref{problemSetup}, we may ask under which condition the above limit process is actually stabilized. 
In this section, we address the question of finite reconstruction: under what conditions is the shadow $\sh(\Ri_\beta(S))$ homotopy equivalent to $M$ for a (possibly finite) subset $S\subset\R^N$ with sufficiently small $d_\h(M, S)$?

When $M$ is a one-dimensional closed smooth submanifold, that is, a smooth simple closed curve, we obtain some such results in Theorem~\ref{onedim} as follows.  
We not only reconstruct the homotopy of $M$ but also reconstruct its topological embedding type.

In the sequel, $M$ is a smooth simple closed curve in $\mathbb R^N$ and $N(M)$ in \ref{assumptions}~(M3) is a tubular neighborhood of $M$ with the bundle projection 
$\pi\colon N(M)\to M$. It satisfies 
\[
\Vert \pi(x)-x\Vert = \min_{p\in M}\Vert p-x\Vert
\]
for each point $x \in N(M)$. Also for $\tau >0$, $N_{\tau}(M)$ denotes the $\tau$-tubular neighborhood of $M$.
For a point $p\in M$, $T_{p}(M)$ denotes the tangent line of $M$ at $p$, regarded as an affine line of $\mathbb{R}^N$ through $p$. Also $(T_{p}(M))^{\perp}$ denotes the affine subspace of $\mathbb{R}^N$ through $p$ that is orthogonal to $T_{p}(M)$.
For a point $x\in \mathbb{R}^N$ and $\varepsilon>0$, let $B_{\mathbb{R}^N}(x,\varepsilon) = \{y\in \mathbb{R}^{N}\mid \Vert x-y\Vert < \varepsilon\}$.  
Due to $M$ being one-dimensional, there exists a positive number $\eta(M)$ such that
\[
(T_{p}(M))^{\perp} \cap (B_{\mathbb{R}^N}(p,\eta) \cap M) = \{p\}
\]
for each $p \in M$ and $\eta<\eta(M)$.

We consider the following conditions on the scale $\beta$. 
\begin{itemize}
\item[($\beta$-1)] $\sh(\Ri_\beta (M)) \subset N(M)$.
\item[($\beta$-2)] $3 \beta < \eta(M)$,
\item[($\beta$-3)] $\beta + \xi(\beta+\varepsilon_{\beta}) < \rho(M)$, where $\xi$ and $\varepsilon_{\beta}$ are constants in~\ref{assumptions}~(M2) and (M3) for the induced metric $d_M$ on $M$ as a Riemannian submanifold of $\mathbb{R}^N$.

\end{itemize}
When $\sh(\Ri_{\beta}(M)) \subset N(M)$, the restriction of $\pi:N(M)\to M$ to $\sh(\Ri_{\beta}(M))$ is denoted by $\pi_\beta$.  As in the previous sections, $\mathbb S$ and $\mathbb{S}_{\tau}$ denote the collections of all finite subsets of $M$ and $N_{\tau}(M)$ respectively.

\begin{theorem}\label{onedim}
Let $M$ be a one-dimensional smooth closed submanifold of $\mathbb{R}^N$. Assume that  $\beta>0$ satisfies the conditions ($\beta$-1), ($\beta$-2) and ($\beta$-3).
\begin{itemize}
\item[(1)] 
The projection $\pi_{\beta}\colon\sh(\Ri_\beta(M)) \to M$  and the shadow projection
$p_{\beta}\colon\Ri_\beta(M) \to \sh(\Ri_\beta(M))$ are homotopy equivalences.
\item[(2)] Let $S\in \mathbb{S}$ such that $S$ is $\beta/2$-dense. 
The projection $\pi_{\beta}^{S}\colon\sh(\Ri_\beta(S)) \to M$ and 
the shadow projection $p_{\beta}^{S}\colon \Ri_\beta(S) \to \sh(\Ri_\beta(S))$ are homotopy equivalences.
\item[(3)]
Assume $\tau>0$ and $S \in \mathbb{S}_\tau$ satisfies $N_{\tau}(M) \subset N(M)$ and $\pi(S)$ is $\zeta$-dense.  If $\tau + \zeta < \beta/2$, then there exists a PL simple closed curve $K \subset \sh(\Ri^{\mathbb{R}^N}_{\beta}(S))$ such that $K$ and $M$ are topologically equivalently embedded in $\mathbb{R}^N$.
\end{itemize}
\end{theorem}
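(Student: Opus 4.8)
The plan is to regard parts (1)--(2) as fixed-scale refinements of Proposition~\ref{prop:SbetaM} and Lemma~\ref{pifbs}, made possible by one-dimensionality, and to treat (3) as an explicit construction of an embedded polygonal representative of $M$ inside the shadow; throughout, the decisive hypothesis is ($\beta$-2), $3\beta<\eta(M)$, which forces the vertices of every simplex of $\Ri_\beta(M)$ onto a short, nearly straight arc, so that the bundle projection $\pi$ behaves like orthogonal projection onto that arc. For (1) I would show that the fiberwise straight-line homotopy $H(x,t)=tx+(1-t)\pi_\beta(x)$, which deformation-retracts $N(M)$ onto $M$, restricts to a deformation retraction of $\sh(\Ri_\beta(M))$ onto $M=\jmath_\beta(M)$, so that $\pi_\beta$ is a homotopy equivalence with inverse $\jmath_\beta$. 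The one step to verify is that each segment $[x,\pi_\beta(x)]$ stays in $\sh(\Ri_\beta(M))$: writing $x\in\conv(\{p_0,\dots,p_k\})$ with $\diam_M(\{p_0,\dots,p_k\})<\beta$, condition ($\beta$-2) places the $p_i$ on one short arc and makes $\pi_\beta(x)$ land within its extent, so $\{p_0,\dots,p_k,\pi_\beta(x)\}$ still has $d_M$-diameter below $\beta$ and the segment lies in $\conv(\{p_0,\dots,p_k,\pi_\beta(x)\})\subset\sh(\Ri_\beta(M))$. The shadow projection $p_\beta$ is then handled via Proposition~\ref{projectionhausmann} with $F=M$, giving $\iota_\beta^M=\operatorname{id}$ and $\pi_\beta\circ p_\beta\simeq T_\beta$; since a smooth closed curve satisfies the conditions (i)--(ii) preceding Lemma~\ref{diamhausmann} at scale $2\beta<\rho(M)$, the Hausmann map $T_\beta$ is a homotopy equivalence, and $p_\beta$ is one by the two-out-of-three property.

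For (2) I would replace $\jmath_\beta$ by the partition-of-unity map $f_\beta^S\colon M\to\sh(\Ri_\beta(S))$ of~\eqref{fbS}; Lemma~\ref{pifbs}(1) already gives $\pi_\beta^S\circ f_\beta^S\simeq\operatorname{id}_M$, and the task is to upgrade Lemma~\ref{pifbs}(2) so that $f_\beta^S\circ\pi_\beta^S\simeq\operatorname{id}$ holds inside $\sh(\Ri_\beta(S))$ rather than the enlarged $\sh(\Ri_{\nu_\beta}(S))$. Using $\beta/2$-density of $S$ together with ($\beta$-2)--($\beta$-3), the same short-arc estimate shows that the vertex set $\{p_1,\dots,p_k\}\cup(S\cap B_{\beta/2}(\pi_\beta^S(x)))$ has $d_M$-diameter below $\beta$, keeping the homotopy at scale $\beta$; then $f_\beta^S$ and $\pi_\beta^S$ are mutual homotopy inverses, and $p_\beta^S$ is handled exactly as in (1).

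The heart of the theorem is (3), which I would prove by an explicit construction. Using the cyclic order of $M\cong S^1$, order the sample as $S=\{s_1,\dots,s_n\}$ by the position of $\pi(s_i)$ on $M$ and let $K$ be the closed polygonal curve $[s_1,s_2]\cup\dots\cup[s_{n-1},s_n]\cup[s_n,s_1]$. First, each edge is a $1$-simplex of $\Ri_\beta^{\mathbb R^N}(S)$: since $\pi(S)$ is $\zeta$-dense the consecutive gaps satisfy $d_M(\pi(s_i),\pi(s_{i+1}))<2\zeta$, so $\|s_i-s_{i+1}\|<2\tau+2\zeta=2(\tau+\zeta)<\beta$; hence $K\subset\sh(\Ri_\beta^{\mathbb R^N}(S))$. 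The crucial claim is that $\pi|_K\colon K\to M$ is a homeomorphism, equivalently that $K$ meets each normal fiber $(T_pM)^\perp\cap N(M)$ exactly once: each edge has length $<\beta$ and lies over an arc of length $<2\zeta$, and ($\beta$-2) makes the nearby fibers so close to parallel that the short chord crosses them monotonically, while distinct edges project to arcs with disjoint interiors that tile $M$. Granting injectivity of $\pi|_K$, the curve $K$ is a simple closed curve, and the fiberwise isotopy $F_t(x)=tx+(1-t)\pi(x)$ gives a path of embeddings of $S^1$ from the inclusion of $K$ at $t=1$ to $\pi|_K$, a parametrization of $M$, at $t=0$; each $F_t$ is injective because $\pi\circ F_t=\pi|_K$ is injective, and $F_t(K)\subset N(M)$ because the fibers are convex. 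Extending $F_t$ to an ambient isotopy of $\mathbb R^N$ supported in $N(M)$ by the isotopy extension theorem yields a homeomorphism of pairs $(\mathbb R^N,K)\cong(\mathbb R^N,M)$, i.e.\ $K$ and $M$ are equivalently embedded.

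I expect the main obstacle to be the injectivity (monotonicity) of $\pi|_K$ in part (3): one must rule out that a single short edge $[s_i,s_{i+1}]$, or two different edges, re-enter a common normal fiber. This requires a quantitative comparison of the chord direction $s_{i+1}-s_i$ with the tangent field of $M$ over a short arc, which is exactly what the transversality constant $\eta(M)$ and the slack in ($\beta$-2) are meant to supply; turning the intuitive ``graph over $M$'' picture into a rigorous fiberwise-monotonicity estimate that is uniform over all edges is the delicate core of the argument, and the same estimate is what guarantees that every intermediate $K_t$ stays embedded so that isotopy extension applies.
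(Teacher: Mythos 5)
Your part (1) is essentially the paper's argument: the straight-line homotopy $(1-t)x+t\pi_{\beta}(x)$ works, and the one assertion you leave unproved --- that $\pi_{\beta}(x)$ lands on the short arc spanned by $p_{0},\dots,p_{k}$ --- is exactly Lemma~\ref{intermediate}, which the paper proves by projecting orthogonally to $T_{\pi(x)}(M)$ (so that the $p_{i}$ lie on both sides of $(T_{\pi(x)}(M))^{\perp}$) and using ($\beta$-2) to force $C_{M}(p_{1},\dots,p_{k})\cap(T_{\pi(x)}(M))^{\perp}=\{\pi(x)\}$; the treatment of $p_{\beta}$ via Proposition~\ref{projectionhausmann} and Hausmann's theorem also matches. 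But your part (2) rests on a claim that is false as stated: the set $\{p_{1},\dots,p_{k}\}\cup(S\cap B_{\beta/2}(\pi_{\beta}^{S}(x)))$ need not have $d_{M}$-diameter below $\beta$. The triangle inequality only gives $3\beta/2$, which is precisely why Lemma~\ref{pifbs}(2) has to pass to the larger scale $\nu_{\beta}=((1/2)+\xi)\beta+\xi\varepsilon_{\beta}$, and one-dimensionality does not remove the excess: on a round circle of large radius (so ($\beta$-1)--($\beta$-3) hold with room to spare) with samples evenly spaced at arc length $0.49\beta$, take the $1$-simplex on $\gamma(0),\gamma(0.98\beta)$ and $x$ with $\pi_{\beta}^{S}(x)=\gamma(0.98\beta)$; then $y=\gamma(1.47\beta)$ lies in $S\cap B_{\beta/2}(\pi_{\beta}^{S}(x))$ while $d_{M}(y,\gamma(0))=1.47\beta>\beta$. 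What would rescue your route is not the ball $B_{\beta/2}$ but the support condition \eqref{lambda}: $\lambda_{s}^{S}$ vanishes off the Voronoi cell $D_{S}(s)$, so $f_{\beta}^{S}(\pi_{\beta}^{S}(x))$ is in fact a convex combination of only the (at most two) samples adjacent to $\pi_{\beta}^{S}(x)$ along $M$, and these lie in $C_{M}(p_{1},\dots,p_{k})$ because the open arc between consecutive samples contains no sample point. The paper avoids this repair altogether by discarding $f_{\beta}^{S}$ in part (2) and defining a new map $g_{\beta}^{S}$ sending $\gamma(t)$, $t\in[t_{i},t_{i+1}]$, to the unique point of $\overline{\gamma(t_{i})\gamma(t_{i+1})}\cap(T_{\gamma(t)}(M))^{\perp}$, for which $\pi_{\beta}^{S}\circ g_{\beta}^{S}=\operatorname{id}_{M}$ holds exactly and the linear homotopy stays at scale $\beta$.

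In part (3) there are two further gaps. First, your $K$ is built on \emph{all} of $S$ ordered by the position of $\pi(s_{i})$; this is ill-defined, and the crucial claim that $K$ meets each normal fiber exactly once is false, as soon as two samples share a fiber ($\pi(s_{i})=\pi(s_{j})$), since an edge then lies inside a single fiber. The paper enumerates the \emph{distinct} values $\pi(S)=\{\gamma(t_{1}),\dots,\gamma(t_{n})\}$ and selects one point $x_{i}\in S\cap\pi^{-1}(\gamma(t_{i}))$ per fiber; with that selection, and since no point of $\pi(S)$ lies in $\gamma((t_{i},t_{i+1}))$, each edge projects onto $\gamma([t_{i},t_{i+1}])$ and meets each intermediate fiber in a singleton --- the monotone-crossing estimate you defer as the ``delicate core'' is thereby localized and settled rather than left open. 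Second, your appeal to the isotopy extension theorem for the merely continuous fiberwise isotopy $F_{t}(x)=tx+(1-t)\pi(x)$ is itself a gap in the topological category (Edwards--Kirby requires local flatness hypotheses you do not verify); the paper instead constructs the ambient isotopy by hand, as fiberwise disk isotopies $H_{p}\colon D_{p}\times[0,1]\to D_{p}$ fixing $\partial D_{p}$, assembled continuously over $M$ and extended by the identity outside $N_{\tau}(M)$. So: (1) is sound and matches the paper, but (2) would fail at a concrete, identifiable step, and (3) needs both the one-point-per-fiber selection and an explicit extension construction to go through.
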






\begin{remark}
\begin{itemize}
\item[(1)]
The metric thickening, denoted by $\Ri_{\beta}^{\mathfrak{M}}(M)$ in the present paper,  for a metric space $M$ with scale parameter $\beta$ was introduced in \cite{Adamaszek2018}. There exists a natural continuous bijection $j\colon\Ri_{\beta}(M) \to \Ri^{\mathfrak{M}}_{\beta}(M)$ that induces an isomorphism in homotopy groups in all dimention \cite[Theorem 1]{gillespie2024}.  For a subset $M$ of $\mathbb{R}^N$ with the induced metric, $\Ri^{m}_{\beta}(M)$ admits a natural map $f:\Ri_{\beta}^{\mathfrak{M}}(M) \to \sh(\Ri^{\mathbb{R}^N}(M))$ such that the shadow projection is equal to the composition of $f$ and $j$:
\[
p= f\circ j\colon\Ri_{\beta}^{\mathbb{R}^N}(M) \to \sh(\Ri_{\beta}^{\mathbb{R}^N}(M)).
\]
The proofs of Adamszek-Adams \cite[Theorem 4.6]{Adams_2019} and Gillespie \cite[Theorem 1]{gillespie2024} show that the composition map $\pi_{\beta}\circ p_{\beta}\colon\Ri^{\mathbb{R}^N}_{\beta}(M) \to \sh(\Ri_{\beta}^{\mathbb{R}^N}(M)) \to M$ induces an isomorphism of homotopy groups in all dimensions.  It follows from this that the induced homomorphisms by $\pi_\beta$ and $p_\beta$ are surjective and injective, respectively. 
\item[(2)] The proofs of Proposition \ref{onedim} (1) and (2) are not carried over for the Euclidean Vietoris-Rips complex $\Ri^{\mathbb{R}^N}_{\beta}(M)$.
\item[(3)] For the PL simple curve $K$ of (3) of the above theorem, one can show that there exists a retraction $r:\sh(\Ri_{\beta}^{\mathbb R^N}(S)) \to K$ such that
$r$ is homotopic to the inclusion $\sh(\Ri_{\beta}^{\mathbb R^N}(S)) \to \sh(\Ri_{2\beta}^{\mathbb R^N}(S))$. However it is not known whether $r$ is homotopic to $\operatorname{id}_{\sh(\Ri_{\beta}(S))}$. Notice that $K$ is homeomorphic to $M$.
\end{itemize}
\end{remark}

\bigskip

We start with a lemma.

\begin{lemma}\label{intermediate}
Assume that $\beta$ satisfies ($\beta$-1) , ($\beta$-2) and ($\beta$-3).  
Let $p_{1},\ldots,p_{k}$ be points of $M$ such that $d_{M}(p_{i},p_{j}) < \beta$ for each $i,j=1,\ldots,k$, and let $C_{M}(p_{1},\ldots,p_{k})$ be the minimum curve of $M$ containing the set $\{p_{1},\ldots, p_{k}\}$.  For each point 
$x \in \conv(\{p_{1},\ldots, p_{k}\})$, we have $\pi(x) \in C_{M}(p_{1}, \ldots,p_{k})$ 
\end{lemma}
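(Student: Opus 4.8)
The plan is to argue by contradiction, combining the first-order optimality of the nearest-point projection with the one-dimensional normal-injectivity encoded by $\eta(M)$. Throughout I parametrize $M$ by arc length $c\colon\mathbb{R}/L\mathbb{Z}\to M$, so that $C\coloneqq C_M(p_1,\dots,p_k)$ is the sub-arc $c([s_-,s_+])$; since $\diam_M(\{p_1,\dots,p_k\})<\beta$ and $\beta$ is small, this minimal arc has length equal to the geodesic diameter $<\beta$ and its two endpoints are among the $p_i$.

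First I would record the consequences of the hypotheses. As $\diam_M(\{p_1,\dots,p_k\})<\beta$, the set spans a simplex of $\Ri_\beta(M)$, hence $x\in\conv(\{p_1,\dots,p_k\})\subset\sh(\Ri_\beta(M))\subset N(M)$ by ($\beta$-1), so $\pi(x)$ is defined and is the unique nearest point of $M$ to $x$. Writing $x=\sum_i\lambda_i p_i$ with $\lambda_i\ge0$, $\sum_i\lambda_i=1$, a convex-combination estimate gives $\|x-p_i\|\le\max_j\|p_i-p_j\|<\beta$ for every $i$ (Euclidean diameter is bounded by the geodesic one). Setting $q\coloneqq\pi(x)=c(t_0)$ and assuming for contradiction $t_0\notin[s_-,s_+]$, the nearest-point property yields $\|x-q\|\le\|x-p_1\|<\beta$, whence $\|q-p_i\|<2\beta$ for all $i$. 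Since every point of $C$ is within geodesic (hence Euclidean) distance $<\beta$ of one of the endpoints $p_i$, the triangle inequality places the whole arc inside $B_{\mathbb{R}^N}(q,3\beta)$, and by ($\beta$-2) this ball lies inside $B_{\mathbb{R}^N}(q,\eta(M))$.

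Next I would invoke the normal-hyperplane condition. Let $H\coloneqq(T_q(M))^{\perp}$, the affine hyperplane through $q$ orthogonal to $c'(t_0)$. The defining property of $\eta(M)$ gives $H\cap B_{\mathbb{R}^N}(q,\eta(M))\cap M=\{q\}$; since $C\subset M\cap B_{\mathbb{R}^N}(q,\eta(M))$ and $q\notin C$, this forces $C\cap H=\emptyset$. Because $C$ is connected and disjoint from the hyperplane $H$, it lies entirely in one of the two open half-spaces determined by $H$, so $\langle p_i-q,\,c'(t_0)\rangle$ has one and the same strict sign for every $i$.

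Finally, the contradiction: first-order optimality of $q$ as the minimizer of $t\mapsto\|x-c(t)\|^2$ gives $\langle x-q,\,c'(t_0)\rangle=0$, whereas $\langle x-q,\,c'(t_0)\rangle=\sum_i\lambda_i\langle p_i-q,\,c'(t_0)\rangle$ is a convex combination of quantities all of the same strict sign, hence strictly nonzero. Thus $t_0\in[s_-,s_+]$ and $\pi(x)\in C_M(p_1,\dots,p_k)$. The step I expect to be the crux is the geometric localization: verifying that the \emph{entire} minimal arc $C$ — not merely the vertices $p_i$ — is trapped in the ball $B_{\mathbb{R}^N}(q,\eta(M))$ on which the single-point normal condition is valid, so that connectedness of $C$ can convert ``$q$ outside the arc'' into the one-sided sign statement. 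The quantitative bookkeeping with $\beta$, $2\beta$, $3\beta$ and condition ($\beta$-2) is precisely what makes this localization go through.
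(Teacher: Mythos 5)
Your proof is correct and is essentially the paper's argument run in contrapositive form: the orthogonality $\langle x-\pi(x), c'(t_0)\rangle=0$ is the paper's observation that $x\in (T_{\pi(x)}(M))^{\perp}$ with $\Pi(x)=\pi(x)$ in the convex hull of the $\Pi(p_i)$, your one-sided sign contradiction is the paper's direct half-space dichotomy forcing $C_M(p_1,\ldots,p_k)\cap (T_{\pi(x)}(M))^{\perp}\neq\emptyset$, and your $3\beta$ localization combined with ($\beta$-2) to pin the intersection to a single point is identical to the paper's estimate $\Vert q-\pi(x)\Vert<3\beta$ for $q\in C_M(p_1,\ldots,p_k)$. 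No gaps; the bookkeeping and the key ingredients match the published proof.
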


\begin{proof}[Proof of Lemma~\ref{intermediate}]
Let $x$ be a point as in the hypothesis. 
Let $H^+$ and $H^-$ be the closed half-spaces of $\mathbb{R}^N$ determined by the $(N-1)$-dimensional hyperplane $(T_{\pi(x)}(M))^{\perp}$.
Also let $\Pi:\mathbb{R}^{N}\to T_{\pi(x)}(M)$ be the orthogonal projection onto the tangent line $T_{\pi(x)}(M)$ at $\pi(x)$. Observe that $\Pi(x) = \Pi(\pi(x)) = \pi(x)$, and $x\in (T_{\pi(x)}(M))^{\perp}$.

Since $\pi(x) = \Pi(x) \in \conv_{T_{\pi(x)}(M)}(\Pi(p_{1}),\ldots,\Pi(p_{k}))$, we have
\[
\{p_{1}\ldots,p_{k}\} \cap H^{+} \neq \emptyset \neq \{p_{1},\ldots,p_{k}\}\cap H^{-}.
\]
Hence we obtain
\[
C_{M}(p_{1}\ldots,p_{k}) \cap H^{+} \neq \emptyset \neq  C_{M}(p_{1}\ldots,p_{k})\cap H^{-},
\] 
which implies $C_{M}(p_{1}\ldots,p_{k}) \cap (T_{\pi(x)}(M))^{\perp} \neq \emptyset.$ 
Observe 
\[
\Vert \pi(x) -x \Vert \leq \Vert p_{i}-x \Vert
\]
because $p_i \in M$.
For each point $q \in C_{M}(p_{1},\ldots,p_{k})$, we see
\begin{eqnarray*}
\Vert q-\pi(x)\Vert &\leq& \Vert q-p_{1}\Vert + \Vert p_{1}-x\Vert + \Vert x-\pi(x)\Vert \\
&\leq& \Vert q-p_{1}\Vert + 2 \Vert p_{1}-x\Vert \leq d_{M}(q,p_{1})+2\beta < 3\beta.
\end{eqnarray*}
Hence, we have the inclusion 
\[
C_{M}(p_{1},\ldots,p_{k}) \subset B^{\mathbb{R}^N}(\pi(x),3\beta) \cap M = \{\pi(x)\},
\]
where the last equality follows from ($\beta$-2). Hence we have
\[
\emptyset \neq C_{M}(p_{1},\ldots,p_{k}) \cap (T_{\pi(x)}(M))^{\perp} \subset
B^{\mathbb{R}^N}(\pi(x),3\beta) \cap M = \{\pi(x)\},
\]
which implies $\{\pi(x)\} = C_{M}(p_{1},\ldots,p_{k}) \cap (T_{\pi(x)}M)^{\perp}$, as desired.

\end{proof}

\textbf{Proof of Theorem~\ref{onedim}.}~~
(1) First we prove that the projection $\pi_\beta$ is a homotopy equivalence.  We take a point $x\in \sh(\Ri_{\beta}(M))$ and take points $p_{1},\ldots, p_{k}$ of $M$ such that
\[
x \in \conv(\{p_{1},\ldots,p_{k}\})~~\mbox{and}~
d_{M}(p_{i},p_{j}) < \beta~(i,j=1,\ldots,k).
\]
By Lemma \ref{intermediate}, we see $\pi_{\beta}(x) \in C_{M}(p_{1},\ldots,p_{k})$. Since $\diam_{M}(C_{M}(p_{1},\ldots,p_{k})) < \beta$, we have
\[
\diam_{M}(\{\pi(x),p_{1},\ldots,p_{k}\}) < \beta,
\]
which implies $(1-t) x + t \pi_{\beta}(x) \in \sh(\Ri_\beta(M))$ for each $t\in [0,1]$. The rest of the proof proceeds exactly in the same way as that of Proposition~\ref{prop:SbetaM}.

Let $T_{\beta}:\Ri_{\beta}(M) \to M$ be the homotopy equivalence defined in (\ref{hausmannmap}).  Repeating the proof of Proposition \ref{projectionhausmann} and using ($\beta$-3), we see that 
\[
T_{\beta} \simeq \pi_{\beta}\circ p_{\beta}.
\]
Thus, we see that $p_\beta$ is also a homotopy equivalence.

 \bigskip
\noindent

(2)  Again we first prove that the projection $\pi_{\beta}^{S}$ is a homotopy equivalence.  Let $S$ be a finite subset of $M$ which is $\beta/2$-dense. We take an arc-length parametrization $\gamma:[0,\ell]\to M$ of $M$ with $\gamma(0) = \gamma(\ell)$ and enumerate $S$ as 
\[
S = \{\gamma(t_{1}),\gamma(t_{2}),\ldots,\gamma(t_{n}) \}
\]
where $t_{1}<t_{2}< \cdots <t_{n}$. Since $S$ is $\beta/2$-dense in $M$, we see 
\begin{equation}\label{rect-1}
d_{M}(\gamma(t_{i}),\gamma(t_{i+1})) < \beta
\end{equation}
for each $i=1,\ldots, k-1$. 

Let $L$ be the rectlinear curve in $\mathbb{R}^N$ 
defined by
\[
L = \cup_{i=1}^{n}\overline{\gamma(t_{i})\gamma(t_{i+1})} \cup \overline{\gamma(t_{n})\gamma(t_{1})}
\]

From (\ref{rect}) below, we see $L\subset \sh(\Ri_\beta(S))$.
We define a map $g_{\beta}^{S}:M\to L(t_{1},\ldots,t_{n})$ as follows: 
for each $t\in [t_{i},t_{i+1}]$, the points $\gamma(t_{i})$ and $\gamma(t_{i+1})$ belong to distinct half-spaces determined by the $(n-1)$-hyperplane 
$(T_{\gamma(t)}(M))^{\perp}$, and hence the line segment $\overline{\gamma(t_{i})\gamma(t_{i+1})}$ intersects with $(T_{\gamma(t)}(M))^{\perp}$ in exactly one-point.  We define $g_{\beta}^{S}(\gamma(t))$ as 
the unique point $\overline{\gamma(t_{i})\gamma(t_{i+1})} \cap (T_{\gamma(t)}(M))^{\perp}$: 
\[
\{ g_{\beta}^{S}(\gamma(t))\} = \overline{\gamma(t_{i})\gamma(t_{i+1})} \cap (T_{\gamma(t)}(M))^{\perp}.
\]

We verify that $\pi_{\beta}\circ g_{\beta}^{S} = \operatorname{id}_{M}$ and $g_{\beta}^{S} \circ \pi_{\beta} \simeq \operatorname{id}_{\sh(\Ri_{\beta}(M))}$.

Since $\pi_{\beta}^{S}(T_{\gamma(t)}(M))^{\perp} = \gamma(t)$, we see
$\pi_{\beta}^{S} (g_{\beta}^{S}(\gamma(t)))  = \gamma(t)$.  Hence 
$\pi_{\beta}^{S}\circ g_{\beta}^{S} = \operatorname{id}_M$.
For each $x\in \sh(\Ri_\beta(S))$, we take points $p_{1},\ldots,p_{k}$ of $S$ such that $x \in \conv(\{p_{1},\ldots, p_{k}\})$ and $\diam_{M}(\{p_{1},\ldots,p_{k}\}) < \beta$.  Assume that $\pi_{\beta}^{S}(x)$ is written as $\pi_{\beta}^{S}(x) = \gamma(t),~ t \in [t_{i},t_{i+1}]$. By Lemma \ref{intermediate}, we have $\pi_{\beta}^{S}(x) = \gamma(t) \in C_{M}(p_{1},\ldots,p_{k})$. 
Since there are no points of $S$ in $\gamma(t_{i},t_{i+1})$, we have the inclusion 
$\gamma([t_{i},t_{i+1}]) \subset C_{M}(p_{1},\ldots,p_{k})$.  In particular
$\diam_{M}(\{ \gamma(t_{i}), \gamma(t_{i+1}), p_{1},\ldots, p_{k}\}) <\beta$ 
and hence 
\[
\conv(\{ \gamma(t_{i}), \gamma(t_{i+1}), p_{1},\ldots, p_{k}\}) \subset \sh(\Ri_{\beta}(S)).
\]
Thus we have the map $\sh(\Ri_\beta(S))\times [0,1] \to \sh(\Ri_\beta(S))$ given by
\[
(x,t) \mapsto (1-t)x + t g_{\beta}^{S}(\pi_{\beta}^{S}(x)),~~t \in [0,1]
\]
is a homotopy between $\operatorname{id}_{\sh(\Ri_\beta(S))}$ and $g_{\beta}^{S}\circ \pi_{\beta}^{S}$. 

Applying Proposition \ref{projectionhausmann} to a sample set $S\subset M$ which is $\beta/2$-dense in $M$ and using the above together with ($\beta$-3), we see that 
the shadow projection map $p_{\beta}^{S}:\Ri_{\beta}(S) \to \sh(\Ri_{\beta}(S))$ is a homotopy equivalence.

These prove (2).

\bigskip
\noindent

(3).  First observe that $\pi^{-1}(p) \subset (T_{p}(M))^{\perp}$ for each $p\in M$. Take $\tau>0$ and $S \subset N_{\tau}(M)$ as in the hypothesis.  Again we
take an arc-length parametrization $\gamma:[0,\ell]\to M$ of $M$ with $\gamma(0) = \gamma(\ell)$ and enumerate the finite subset $\pi(S)$ of $M$ as 
\[
\pi(S) = \{\gamma(t_{1}),\gamma(t_{2}),\ldots,\gamma(t_{n}) \}
\]
where $t_{1}<t_{2}< \cdots <t_{n}$. Since $\pi(S)$ is $\zeta$-dense in $M$, we see 
\begin{equation}\label{rect}
d_{M}(\gamma(t_{i}),\gamma(t_{i+1})) < 2\zeta
\end{equation}
for each $i=1,\ldots, k-1$. Let $S_{i} = S\cap \pi^{-1}(\gamma(t_{i}))$ and pick a point $x_{i} \in S_i$ for each $i=1, \ldots, n$.  Let $K$ be the rectlinear curve in $\mathbb{R}^N$ defined by
\[
K = \cup_{i=1}^{n-1} \overline{x_{i}x_{i+1}} \cup \overline{x_{n}x_{1}}.
\]
It follows directly that $K$ is a simple closed curve.  Also we see
\begin{eqnarray*}
\Vert x_{i}-x_{i+1}\Vert &\leq& \Vert x_{i} -\gamma(t_{i})\Vert + \Vert \gamma(t_{i})-\gamma(t_{i+1})\Vert + \Vert \gamma(t_{i+1}) - x_{i+1}\Vert\\
&<& 2\tau + 2\zeta < \beta,
\end{eqnarray*}
where the the last inequality follows from the hypothesis.
The above implies that $K \subset \sh(\Ri_{\beta}(S))$.
 Also since
there are no points of $\pi(S)$ in $\gamma(t_{i},t_{i+1})$, we see 
\[
\pi(\overline{x_{i}x_{i+1}}) = \gamma([t_{i},t_{i+1}])
\]
and 
\[
\overline{x_{i}x_{i+1}} \cap \pi^{-1}(\gamma(t)) ~\mbox{is a singleton}.
\]
Let $f:M\to K$ be the map given by $f(\gamma(t_{i})) = x_{i},~i=1,\ldots, n$ and
\[
\{f(\gamma(t)) = \overline{x_{i}x_{i+1}}\cap \pi^{-1}(\gamma(t)).
\]
for each $t\in (t_{i},t_{i+1})$.  
Observe that $\displaystyle N_{\tau}(M) = \cup_{p \in M}D_{p}$, where $D_{\gamma(t)}$ is an $(N-1)$ dimensional disk contained in $(T_{\gamma(t)}(M))^{\perp}$ such that
$\gamma(t), f(t) \in D_{\gamma(t)}$ for each $t\in [0,\ell]$.  For $p \in M$, we have an isotopy $H_{p}:D_{p}\times [0,1] \to D_{p}$ such that
\begin{itemize}
\item[(1)] $H_{\gamma(t)}(x,0) = x$ for each $x\in D_\gamma(t)$ and $H_{\gamma(t)}(\gamma(t),1) = f(t)$ 
for each $t \in [0,\ell]$.
\item[(2)] $H_{p}(x,s) = x$~ for each $x\in \partial D_{p}$ and $s\in [0,1]$.
\item[(3)] The map $H:N_{\tau}(M)\times [0,1] \to N_{\tau}(M)$ defined by
\[
H(x,s) = H_{\pi(x)}(x,s),~~(x,s) \in N_{\tau}(M)\times [0,1]
\]
is an isotopy.
 \end{itemize}
The isotopy $H$ above naturally extends to that on $\mathbb{R}^N$ which fixes each point outside of $N_{\tau}(M)$.  Thus $K$ and $M$ are equivalently embedded in $\mathbb R^N$.

This proves (3).

\section{Discussions and Open Problems}
In this study, we have successfully proved the most intuitive limit theorems regarding Vietoris--Rips complexes and their Euclidean shadows around well-behaved Euclidean subsets $M$.
In the spirit of finite reconstruction of Euclidean shapes, we also show that the limits indeed stabilize, in case $M$ is a smooth, simple closed curve (Theorem~\ref{onedim}).
At the same time, our investigation raises numerous open questions and suggests new directions for exploration.
We list some of them below.

\begin{enumerate}[(1)]
\item
Is the space $\sh(\Ri_\beta(M)$ an ANR?  For a comact metric subspace $M$ of $\mathbb R^N$, we have the equality
\[
M = \cap_{\beta>0}\sh(\Ri_{\beta}(M)).
\]
Hence if the above question has an affirmative answer, then it would be helpful to investigate shape theoretic property of (not necessarily an ANR) $M$ by means of spaces $\sh(\Ri_{\beta}(M))$.

\item How does $\eta(M)$ as defined in Section~\ref{sec:recon} relate to the reach of a closed curve $M$?

\item In view of Theorem~\ref{onedim}, the following (complete) finite reconstruction question seems natural, but remains unanswered.
\begin{conjecture}
Let $M$ be a smooth, simple closed curve in $\mathbb{R}^N$. For any sufficiently small $\beta>0$ and for any finite set $S$ sufficiently close to $M$ in the sense of Hausdorff distance, we have a homotopy equivalence $\sh(\Ri^{\R^N}_{\beta}(S)) \simeq M$.
\end{conjecture}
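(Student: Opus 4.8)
The plan is to promote the retraction recorded in the remark following Theorem~\ref{onedim} into a genuine deformation retraction of $\sh(\Ri_\beta^{\mathbb{R}^N}(S))$ onto the reconstructed curve $K$; since $K$ is a PL simple closed curve embedded as $M$, this immediately yields $\sh(\Ri_\beta^{\mathbb{R}^N}(S))\simeq K\cong M$. First I would fix scales: because $d_{\h}(M,S)$ small forces $S\subset N_\tau(M)$ and $\pi(S)$ to be $\zeta$-dense with $\tau,\zeta$ both controlled by $d_{\h}(M,S)$, one can guarantee $\tau+\zeta<\beta/2$ together with ($\beta$-1)--($\beta$-3), and shrink $\beta$ below $\eta(M)$ so that, by ($\beta$-2), any Euclidean $\beta$-close points of $M$ lie on a common short arc. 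Theorem~\ref{onedim}(3) then furnishes $K\subset\sh(\Ri_\beta^{\mathbb{R}^N}(S))$ together with the section $f\colon M\to K$ satisfying $\pi\circ f=\operatorname{id}_M$; in particular $K$ is a retract of the shadow via $r=f\circ\pi$.

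Next I would work through the tubular projection $\pi\colon N(M)\to M$. After checking $\sh(\Ri_\beta^{\mathbb{R}^N}(S))\subset N(M)$, the restriction $\pi_\beta^S$ is defined and, for $p=\gamma(t)$, the fiber $F_p=\sh(\Ri_\beta^{\mathbb{R}^N}(S))\cap(T_pM)^{\perp}$ contains the single curve point $f(p)$. The candidate deformation retraction is the fiberwise straight line
\[
H(x,s)=(1-s)\,x+s\,f\!\left(\pi_\beta^S(x)\right),\qquad (x,s)\in\sh(\Ri_\beta^{\mathbb{R}^N}(S))\times[0,1],
\]
which fixes $K$, starts at the identity, ends at $r$, and moves each $x$ inside its own fiber (the segment lies in the affine plane $(T_{\pi(x)}M)^{\perp}$). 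The entire theorem thus reduces to a single \emph{fiber lemma}: for every $x$ the segment $[x,\,f(\pi_\beta^S(x))]$ lies in $\sh(\Ri_\beta^{\mathbb{R}^N}(S))$, i.e.\ each fiber $F_p$ is star-shaped about $f(p)$ inside the shadow. This is the exact analogue of the step in the proof of Theorem~\ref{onedim}(2), where Lemma~\ref{intermediate} and the arc $C_M(p_1,\dots,p_k)$ kept the relevant convex hull of diameter $<\beta$.

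The fiber lemma is the main obstacle, and it is precisely where the Euclidean, noisy setting defeats the argument of Theorem~\ref{onedim}(2). Writing $x\in\conv(\{q_1,\dots,q_k\})$ for a Euclidean $\beta$-simplex and $f(\pi(x))\in\overline{x_ix_{i+1}}$, the estimates behind part (3) only bound $\diam_{\mathbb{R}^N}(\{x_i,x_{i+1},q_1,\dots,q_k\})$ by $2\beta$, so a priori the segment lies only in $\sh(\Ri_{2\beta}^{\mathbb{R}^N}(S))$; this is the scale inflation already flagged in the remark after Theorem~\ref{onedim}. My intended route is a \emph{chaining lemma}: subdivide $[x,f(\pi(x))]$ and, using that $\pi(S)$ is $\zeta$-dense with $\zeta\ll\beta$, cover it by a finite sequence of genuine $\beta$-simplices of $\Ri_\beta^{\mathbb{R}^N}(S)$ whose convex hulls overlap consecutively along the segment, so that the concatenated linear homotopies remain in $\sh(\Ri_\beta^{\mathbb{R}^N}(S))$ rel the endpoint $f(\pi(x))$. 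The genuine difficulty is that $x$ may sit at normal distance up to $\sim\beta$ from $M$, while $S$ populates only the $\tau$-tube $N_\tau(M)$ with $\tau\ll\beta$; hence one cannot simply interpolate sample points along the outer portion of the segment, and one must instead first contract $x$ within its own hull toward $M$ and only then chain inward, controlling at every stage that the diameter never exceeds $\beta$. An alternative is a metric sandwich: Assumption (N) gives $\Ri_{\beta/\kappa_2}^{d_\tau}(S)\subset\Ri_\beta^{\mathbb{R}^N}(S)\subset\Ri_{\kappa_1\beta}^{d_\tau}(S)$, so if one first establishes a \emph{fixed-scale} (not merely limiting, cf.\ Proposition~\ref{EuclideanM}) analogue of Theorem~\ref{onedim}(2) for $d_\tau$ and shows the inclusions $\sh(\Ri_{\beta_1}^{d_\tau}(S))\hookrightarrow\sh(\Ri_{\beta_2}^{d_\tau}(S))$ are homotopy equivalences throughout the range, the sandwiched Euclidean shadow inherits the equivalence. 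Either way the crux is the same: converting the limiting/interleaving statements of Sections~\ref{sec:shadowLimit}--\ref{sec:shadowLimitNoisy} into a fixed-scale isomorphism by taming the $\beta\mapsto 2\beta$ inflation that the convex-hull diameter bounds introduce.
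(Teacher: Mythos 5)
You have not proved the statement, and you could not have matched the paper's proof because the paper does not contain one: this is Conjecture listed among the open problems, and the remark after Theorem~\ref{onedim} records exactly the obstruction you run into. Your entire argument funnels into the ``fiber lemma'' --- that each fiber $F_p=\sh(\Ri_\beta^{\mathbb{R}^N}(S))\cap (T_pM)^{\perp}$ is star-shaped about $f(p)$ --- and you never prove it. The diameter bookkeeping available from the simplex data (your own estimate $\diam_{\mathbb{R}^N}(\{x_i,x_{i+1},q_1,\dots,q_k\})\le 2\beta$) only places the straight-line homotopy in $\sh(\Ri_{2\beta}^{\mathbb{R}^N}(S))$, which is precisely the paper's statement that the retraction $r$ is homotopic to the inclusion $\sh(\Ri_{\beta}^{\mathbb{R}^N}(S))\to\sh(\Ri_{2\beta}^{\mathbb{R}^N}(S))$ but is \emph{not known} to be homotopic to $\operatorname{id}_{\sh(\Ri_\beta(S))}$. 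Your proposed repair, the ``chaining lemma,'' is only named, not constructed: you correctly observe that a hull point $x$ can lie well outside $N_\tau(M)$ while $S$ populates only the thin tube, but you give no mechanism for covering the outer portion of the segment $[x,f(\pi(x))]$ by convex hulls of genuine $\beta$-simplices with vertices in $S$, and no estimate showing the concatenated diameters stay below $\beta$. Note also that the analogy you lean on is explicitly disclaimed in the paper: Remark (2) after Theorem~\ref{onedim} states that the proofs of parts (1) and (2) do \emph{not} carry over to the Euclidean Vietoris--Rips complex, so mimicking the proof of (2) (which uses $d_M$-simplices, Lemma~\ref{intermediate}, and noiseless samples on $M$) is not licensed in the Euclidean, noisy setting.

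Your fallback ``metric sandwich'' has the same defect one level up. Assumption (N) and Proposition~\ref{EuclideanM} give an \emph{interleaving} of the two systems of shadows, which yields isomorphisms only after passing to $\varprojlim_{(\beta,\tau)}$; extracting a fixed-scale homotopy equivalence from an interleaving requires knowing that the comparison inclusions $\sh(\Ri_{\beta_1}^{d_\tau}(S))\hookrightarrow\sh(\Ri_{\beta_2}^{d_\tau}(S))$ are themselves homotopy equivalences throughout a range of scales --- and that stabilization statement is again the conjecture in disguise (it is not known even for noiseless Euclidean samples, which is why the paper proves Theorem~\ref{onedim}(3) only up to an ambient isotopy class of an embedded PL curve $K$, not a homotopy equivalence of the full shadow). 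In short: your proposal is a clean \emph{reduction} of the conjecture to the star-shapedness/chaining statement, and it correctly identifies where the difficulty lives, but the reduction target is left entirely open, so no part of the conjecture is actually established.
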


\item To what extent can the results of Theorem~\ref{onedim} be generalized to higher-dimensional manifolds?




\end{enumerate}

\bibliography{main}
\bibliographystyle{plain}

\end{document}